\newtheorem{lemma}{Lemma}
\newtheorem*{prop1A}{Proposition 1$_a$}
\newtheorem*{prop2A}{Proposition 2$_a$}
\newtheorem*{prop1B}{Proposition 1$_b$}
\newtheorem*{prop2B}{Proposition 2$_b$}
\newtheorem*{prop1C}{Proposition 1$_c$}
\newtheorem*{prop2C}{Proposition 2$_c$}
\newtheorem*{prop3}{Proposition 3}
\newtheorem*{prop4}{Proposition 4}
\newtheorem*{prop5}{Proposition 5}
\newtheorem*{prop6}{Proposition 6}
\newtheorem*{prop7}{Proposition 7}
\newtheorem*{theorem}{Theorem}
\newtheorem*{claim}{Claim}
\theoremstyle{plain}
\newtheorem*{thm1A}{Theorem 1$_a$}
\newtheorem*{thm2A}{Theorem 2$_a$}
\newtheorem*{thm1B}{Theorem 1$_b$}
\newtheorem*{thm2B}{Theorem 2$_b$}
\newtheorem*{thm1C}{Theorem 1$_c$}
\newtheorem*{thm2C}{Theorem 2$_c$}
\theoremstyle{definition}
\newtheorem{theo}{Theorem}
\newtheorem{definition}[theo]{Definition}
\journal{Annales de l'Iinstitut Henri Poincar{e}}
\begin{document}

\begin{frontmatter}

\title{Gluing scalar-flat manifolds with vanishing mean curvature on the boundary} 
 
\author{Demetre Kazaras}

\ead{demetre@uoregon.edu}

\address{Department of Mathematics, University of Oregon, Eugene, OR 97403-1222 USA}

\begin{abstract}
We establish a gluing theorem for solutions of a Yamabe
problem for manifolds with boundary studied by J. Escobar in the
mid 90's.  We begin with two compact Riemannian manifolds
with boundary, each scalar-flat, of vanishing boundary mean curvature,
and equipped with a common submanifold $K$.  Under suitable geometric 
conditions, we produce a 1-parameter family of metrics on 
the generalized
connect sum along $K$, each of
vanishing scalar curvature and constant boundary mean curvature.
Assuming an extra non-degeneracy hypothesis, we can arrange for these 
metrics to have vanishing boundary mean curvature.
Moreover, these metrics converge to the original metrics 
away from the gluing site in the $\mathcal{C}^2$ topology.
\end{abstract}

\begin{keyword}
The Yamabe problem\sep Boundary value problem\sep Surgery\\
\MSC[2010]53A10\sep53A30\sep53C21\sep57R65\sep58J32
\end{keyword}

\end{frontmatter}

\section{Introduction}
  Given a closed $n$-dimensional manifold $M$ and a conformal class $C$, 
  the classical
  Yamabe problem asks if there is a metric in $C$ of constant
  scalar curvature. 
  Such metrics are critical points of the Einstein-Hilbert functional
  \[
  C\to\mathbb{R},\quad 
  g\mapsto\frac{\frac{n-2}{4(n-1)}\int_M R_gd\mu_g}{\mathrm{Vol}_g(M)^{\frac{n-2}{n}}}
  \]
  restricted to the class $C$. See section 1 for a description of our notation.
  When the solution of this problem \cite{S} was nearly a decade old, J. Escobar
  introduced generalizations to compact manifolds $M$ with non-empty
  boundary $\partial M$. The natural functional to consider in the context of 
  a boundary is 
  the {\em total scalar curvature plus total mean curvature} \cite{Ar}. 
  In order to make this quantity scale-invariant, it must be renormalized.
  In the case of the classical Yamabe problem this is accomplished by dividing
  the total scalar curvature by $\mathrm{Vol}_g(M)^{\frac{n-2}{n}}$. 
  For manifolds with boundary, however, one may choose to renormalize with 
  respect to the volume of the interior, the boundary, or a combination of the two.
    
  In \cite{E1}, 
  Escobar studies the following family of functionals   
  \[
  C\to\mathbb{R},\quad g\mapsto\frac{\frac{n-2}{4(n-1)}\int_MR_gd\mu_g+\frac{n-2}{2(n-1)}\int_{\partial M}H_gd\sigma_g}{a\mathrm{Vol}_g(M)^{\frac{n-2}{n}}+(1-a)\mathrm{Vol}_g(\partial M)^{\frac{n-2}{n-1}}}
  \]
  where $a$ is a fixed number in the interval $[0,1]$. For any value of $a$, 
  critical points of this functional
  are metrics of constant scalar curvature with constant mean curvature 
  on the boundary.
  For $a=1$, critical points are scalar-flat and for $a=0$ critical points 
  have vanishing mean
  curvature on the boundary. These extremal cases are studied, respectively, 
  in \cite{E3} and \cite{E1} where critical points are found for 
  a large class of $M$ and $C$.  Notice that scalar-flat metrics with vanishing 
  mean curvature on the boundary are critical points of this functional for any 
  value of $a$. Conformal classes which contain such metrics 
  are called {\em Yamabe-null}.

\begin{figure}[htb!]
\begin{center}
\begin{picture}(0,0)
\end{picture}
\includegraphics[height=3in]{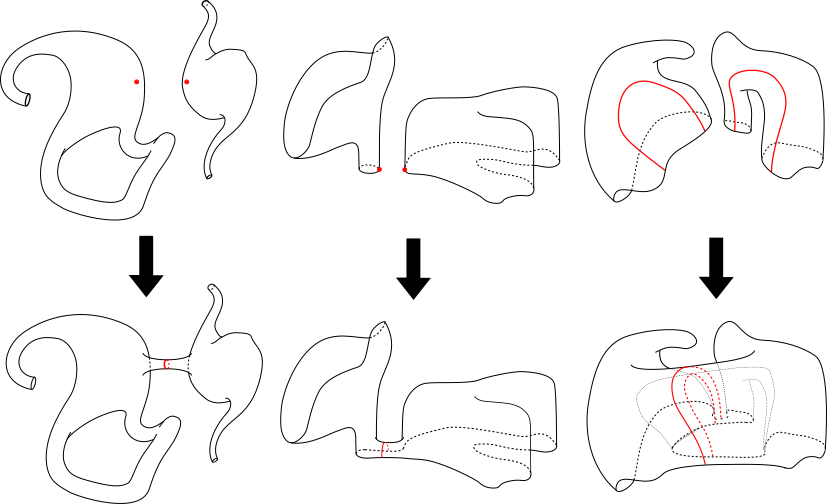}
\end{center}

\caption{Schematic description of the generalized connected sum for 
(from left) an interior, boundary, and relative embedding.}
\label{fig:schematic}
\end{figure}

  In this paper we determine to what extent two Yamabe-null
  manifolds with boundary may be glued along a common submanifold to
  produce a third Yamabe-null manifold.
  Gluing constructions have a rich and storied history in geometric analysis,
  too extensive to satisfactorily survey here.
  For our construction, we will adopt a particular scheme introduced by 
  L. Mazzieri in \cite{M1}
  for gluing closed manifolds with non-zero constant scalar curvature.
  His work generalizes results
  of D. Joyce \cite{J} on connected sums of closed manifolds of non-zero 
  constant scalar
  curvature (see also \cite{Mazzeo}). In \cite{M2} Mazzieri
  considers the more delicate problem of gluing two closed Yamabe-null
  manifolds to produce another manifold of
  vanishing scalar curvature. In general, this process may be
  obstructed if one of the two original manifolds is Ricci-flat.
  In the present paper we encounter a similar
obstruction which, naturally, involves the second fundamental form of
the original manifolds' boundaries -- obstructors to our process can
be identified as Ricci-flat manifolds with totally geodesic boundary.
Our construction is flexible enough to also glue along submanifolds which themselves
have boundary meeting the ambient boundary orthogonally.
This requires a new geometric construction and we naturally encounter
a family of elliptic problems with mixed Dirichlet-Neumann boundary
conditions for which we must provide new a priori estimates.

Let us describe the main result, first in the case where gluing occurs along 
a submanifold embedded away from the boundary which we call an 
{\it interior embedding}.  Let
$(M_1,g_1)$ and $(M_2,g_2)$ be $n$-dimensional compact manifolds which
are scalar-flat and have vanishing boundary mean curvatures.
Moreover, suppose that each is equipped with an isometric embedding of
a closed $k$-dimensional manifold $(K,g_K)$, denoted by $\iota_*:K\to
\mathring{M}_*$ ($*=1,2$).  Assuming that the isometry
$\iota_1\circ\iota_2^{-1}$ extends to an isomorphism of the normal
bundles of $K$, we may form $M:=M_1\#_KM_2$, the generalized connected
sum along $K$ by removing small tubular neighborhoods and using the
bundle isomorphism to identify annular regions (see Figure \ref{fig:schematic}). In sections 2 and 3,
we begin by producing and studying a 1-parameter family of metrics
$g_\varepsilon$ on $M$ transitioning between $g_1$ and $g_2$ on a
neighborhood of the surgery site.  The metrics $g_\varepsilon$ can be
thought of as attaching $M_1$ and $M_2$ by a thin, short $K$-shaped
tube which becomes thinner as $\varepsilon$ decreases.  This family serves as a starting point for an iterative
construction described in sections 4 and 5 which produces a family of
metrics conformal to $g_\varepsilon$, each scalar flat and of constant
boundary mean curvature.  More formally, we prove the following.

\begin{thm1A}
  Let $(M_1,g_1)$, $(M_2,g_2)$ be compact
  $n$-dimensional manifolds with non-empty boundaries. 
  Assume that
  \[
  R_{g_1 }\equiv 0, \ \ H_{g_1}\equiv 0, \ \ 
   R_{g_2 }\equiv 0, \ \ H_{g_2}\equiv 0, \ \ \mbox{and} \ \
\mathrm{Vol}_{g_1}(\partial M_1)= \mathrm{Vol}_{g_2}(\partial
       M_2).
  \]
       Given isometric embeddings $\iota_1:K\to\mathring{M}_1$, $\iota_2:K\to\mathring{M}_2$ of a closed $k$-dimensional manifold 
       $(K,g_K)$ of codimension $m:=n-k\geq3$ with
       isomorphic normal bundles, there exists a family of scalar-flat
       metrics $\{\tilde{g}_\varepsilon\}_{\varepsilon\in(0,\varepsilon_0)}$
       (for some $\varepsilon_0>0$) on $M= M_1\#_KM_2$ with constant
       boundary mean curvature
\[
|H_{\tilde{g}_\varepsilon}|=\mathcal{O}(\varepsilon^{m-2}).
\]
Moreover, for each $\varepsilon$, $\tilde{g}_\varepsilon$ is conformal to $g_{*}$ away
from a fixed tubular neighborhood of 
$\iota_*(K)$ in $M_*$ and
$\tilde{g}_\varepsilon\to g_{*}$
on compact sets of
$M_*\setminus\iota_*(K)$ in the $\mathcal{C}^2$ topology as
$\varepsilon\to0$ for $*=1,2$.
\end{thm1A}
The above codimension restriction allows spheres in fibers of 
the normal bundles to carry curvature, which will be required in our
construction.  If neither of the original manifolds $(M_1,g_1)$, $(M_2,g_2)$ are 
Ricci-flat with vanishing second
fundamental form of the boundary, more can be accomplished -- we may alter 
this construction
in an $\varepsilon$-small non-conformal manner, so that the resulting
metrics have vanishing boundary mean curvature.
\begin{thm2A}
  Assume, in addition to the conditions in Theorem 1$_a$, that both
  manifolds $(M_1,g_1)$ and $(M_2,g_2)$ are not Ricci-flat with
  vanishing second fundamental form of their boundaries.  Then
  there exists a second family of scalar-flat metrics
  $\{\hat{g}_{\varepsilon}\}_{\varepsilon\in(0,\varepsilon_0)}$ on $M=M_1\#_KM_2$ with
  vanishing boundary mean curvature.  Moreover, $\hat{g}_\varepsilon\to
  g_{*}$ on compact sets of $M_*\setminus\iota_*(K)$ in the
  $\mathcal{C}^2$ topology as $\varepsilon\to0$ for $*=1,2$. 
\end{thm2A}
As mentioned earlier, we additionally consider gluing along boundaries 
i.e. when the
embedding of $K$ has a non-trivial intersection with
$\partial M_1$ and $\partial M_2$.  Carrying out the construction in this case requires
substantial changes and new estimates which are contained in sections
2 and 3.  It is convenient to break into two further cases: that in which $K$ is closed
and embedded into the boundaries $\partial M_*$ and that in
which $K$ itself has a boundary $\partial K$ with $\mathring{K}$ and
$\partial K$ embedded into $\mathring{M}_*$ and $\partial M_*$,
respectively.  We will refer to the former as a {\it boundary embedding}
and the latter as a {\it relative embedding}.  

For boundary embeddings, we naturally require that the isometry
$\iota_2\circ\iota_1^{-1}$ extends to an isomorphism of the boundary
normal bundles $\mathcal{N}(\iota_*(K))\subset T\partial M_*$. Under this assumption,
there is well-defined boundary connected sum along $K$, still denoted by
$M=M_1\#_KM_2$, see Section 2.2 for details.
\begin{thm1B}
Let $(M_1,g_1)$, $(M_2,g_2)$ be as in Theorem 1$_a$ and suppose
$(K,g_K)$ is a closed manifold with isometric embeddings
$\iota_1:K\to\partial M_1$, $\iota_2:K\to \partial M_2$ with
$m=n-k\geq3$.  Assume that $\iota_2\circ\iota_1^{-1}$ extends to an
isomorphism of the normal bundles
$\mathcal{N}(\iota_*(K))\subset T\partial M_*$.  Then
there exists a family of
scalar-flat metrics
$\{\tilde{g}_\varepsilon\}_{\varepsilon\in(0,\varepsilon_0)}$ with constant
boundary mean curvature
$H_{\tilde{g}_\varepsilon}=\mathcal{O}(\varepsilon^{m-2})$.  Moreover, the
metrics $\tilde{g}_{\varepsilon}$ are conformal to $g_*$ away from a
fixed tubular neighborhood of $\iota_*(K)$ in $M_*$ and converge to
the original metrics on compact sets of $M_*\setminus\iota_*(K)$ in
the $\mathcal{C}^2$ topology as $\varepsilon\to 0$ for $*=1,2$.
\end{thm1B}
\begin{thm2B}
  Assume, in addition to the conditions in Theorem 1$_b$, that both
  manifolds $(M_1,g_1)$ and $(M_2,g_2)$ are not Ricci-flat with
  vanishing second fundamental form of their boundaries.  Then
  there exists a second family of scalar-flat metrics
  $\{\hat{g}\}_{\varepsilon\in(0,\varepsilon_0)}$ on $M=M_1\#_KM_2$ with
  vanishing boundary mean curvature.  Moreover, $\hat{g}_\varepsilon\to
  g_{*}$ on compact sets of $M_*\setminus\iota_*(K)$ in the
  $\mathcal{C}^2$ topology as $\varepsilon\to0$ for $*=1,2$. 
\end{thm2B}
The construction for a relative embedding, however, is a bit more
delicate and we require additional assumptions on the embeddings
$\iota_*$.
\begin{definition}\label{surgery-ready}
We say that the isometric embeddings $\iota_*: K\to M_*$, $*=1,2$, are
\emph{surgery-ready} if
\begin{enumerate}[(i)]
\item $\iota_*$ is a proper embedding, i.e.,
  $\iota_*(\mathring{K})\subset \mathring{M}_*$ and $\iota_*(\partial
  K)\subset \partial M_*$;
\item there is a neighborhood, $V\subset K,$ of $\partial K$ such that the embedding 
	$\iota_*(K)$ agrees with the $g_*$-exponential 
	map on $\iota_*(\partial K)$ (see Figure \ref{fig:relativesurgery});
\item the map $\iota_2\circ\iota_1^{-1}$ extends to an
  isomorphism of the normal bundles $\mathcal{N}_1(K)$,
  $\mathcal{N}_2(K)$ which restricts to an isomorphism of the boundary
  normal bundles $\mathcal{N}_1(\partial K)$, $\mathcal{N}_2(\partial K)$.
\end{enumerate}  
\end{definition}
Assuming the embeddings $\iota_*: K\to M_*$ are surgery-ready,
there is a well-defined generalized connected sum $M=M_1\#_KM_2$ along $K$,
see Section 2.3 for details.  Precisely, we have the following pair of theorems.
\begin{thm1C}
Let $(M_1,g_1)$, $(M_2,g_2)$ be as in Theorem 1$_a$ and $(K,g_K)$ be a
compact manifold with boundary. Assume $\iota_1:K\to M_1$,
$\iota_2:K\to M_2$ are surgery ready isometric embeddings as above
with $m=n-k\geq 3$.  Then there exists a family of scalar-flat metrics
$\{\tilde{g}_\varepsilon\}_{\varepsilon\in(0,\varepsilon_0)}$ on $M=M_1\#_KM_2$
with constant boundary mean curvature
$H_{\tilde{g}_\varepsilon}=\mathcal{O}(\varepsilon^{m-2})$.  Moreover, the
metrics $\tilde{g}_{\varepsilon}$ are conformal to $g_*$ away from a
fixed tubular neighborhood of $\iota_*(K)$ in $M_*$ and converge to
the original metrics on compact sets of $M_*\setminus\iota_*(K)$ in
the $\mathcal{C}^2$ topology as $\varepsilon\to0$ for $*=1,2$.
\end{thm1C}

\begin{thm2C}
  Assume, in addition to the conditions in Theorem 1$_c$, that both
  manifolds $(M_1,g_1)$ and $(M_2,g_2)$ are not Ricci-flat with
  vanishing second fundamental form of their boundaries.  Then
  there exists a second family of scalar-flat metrics
  $\{\hat{g}_{\varepsilon}\}_{\varepsilon\in(0,\varepsilon_0)}$ on $M=M_1\#_KM_2$ with
  vanishing boundary mean curvature.  Moreover, $\hat{g}_\varepsilon\to
  g_{*}$ on compact sets of $M_*\setminus\iota_*(K)$ in the
  $\mathcal{C}^2$ topology as $\varepsilon\to0$ for $*=1,2$. 
\end{thm2C}
Before we begin, the author would like to thank his Ph.D. adviser, 
Prof. Boris Botvinnik for suggesting this problem as well as Prof.
Micah Warren for a number of helpful conversations. 

\section{The Yamabe problem for manifolds with boundary}
Let us introduce the objects and notations we will require.
For a smooth Riemannian $n$-dimensional manifold $(M,g)$ with 
boundary $\partial M$, we will write $Ric_g$ for its Ricci tensor and $A_g$
 for the second fundamental form of the boundary with respect to the 
outward unit normal vector $\nu$.
The scalar curvature of $(M,g)$ is given by $R_g=tr_gRic_g$ 
and its boundary mean curvature is $H_g=tr_gA_g$. Notice that $H_g$
is the sum of the principle curvatures at a point $p\in\partial M$, as opposed to their 
average (usually denoted by $h_g$) which is used in Escobar's original work
 \cite{E3}\cite{E2}\cite{E1}.

A metric $\tilde{g}$ is said to be {\em conformal} to $g$ if 
there is a smooth positive function $f$ so that $\tilde{g}=fg$. The 
equivalence class of metrics
conformal to $g$ will be denoted by $[g]$.  
We will often write the conformal factor in the form $f=\psi^{\frac{4}{n-2}}$.
Writing $c_n=\frac{n-2}{4(n-1)}$, 
the scalar curvature of $\tilde{g}=\psi^{\frac{4}{n-2}}g$ is given by
\[
R_{\tilde{g}}=\frac{L_g\psi}{c_n\psi^{\frac{n+2}{n-2}}}
\]
where $L_g$ is the conformal Laplacian $L_g=-\Delta_g+c_nR_g.$
The mean curvature of the boundary with respect to $\tilde{g}$ is given by
\[
H_{\tilde{g}}=\frac{B_g\psi}{2c_n\psi^{\frac{n}{n-2}}}
\]
where the first-order boundary operator $B_g$ is given by 
$B_g=\partial_\nu+2c_nH_g$ on $\partial M.$

In \cite{E3} Escobar studied and answered the following question: 
Does a given conformal class $[g]$ contain a 
scalar-flat metric with constant boundary mean curvature? In light of the above 
formula, this task is equivalent to solving the following elliptic problem 
with non-linear boundary conditions
\begin{equation}
\label{eq:RYP2}
\begin{cases}
\Delta_g\psi=c_nR_g\psi&\text{ in }M\\
\partial_\nu\psi=2c_n(Q \psi^{\frac{n}{n-2}}-H_g\psi)&\text{ on }\partial M
\end{cases}
\end{equation}
where $Q$ is a constant. If $\psi$ is a smooth solution to (\ref{eq:RYP2}), then 
$\tilde{g}=\psi^{\frac{4}{n-2}}g$ will have vanishing scalar curvature and constant
boundary mean curvature $\lambda$.
As mentioned above, equation (\ref{eq:RYP2}) 
is the Euler-Lagrange equation for the total scalar curvature plus total mean curvature functional (cf. \cite{Ar}), renormalized with respect to the volume of the boundary. In terms of the conformal factor $\psi$, this functional takes the form
\begin{align}
Q(\psi)=\frac{\int_M(|\nabla\psi|^2_g+c_nR_g\psi^2)d\mu_g+2c_n\int_{\partial M}H_g\psi^2d\sigma_g}{(\int_{\partial M}|\psi|^{\frac{2(n-1)}{n-2}}d\sigma)^{\frac{n-2}{n-1}}}\notag
\end{align}
where $d\mu_g$ and $d\sigma_g$ denote the Riemannian measure on $M$ and 
$\partial M$ induced by $g$. 

\section{Construction of $g_\varepsilon$ and the local a priori estimate}
In this section, we construct the generalized connected sum $M=M_1\#_K M_2$
and define a family of metrics $\{g_\varepsilon\}_{\varepsilon\in(0,\frac12)}$ on $M$.  
At this point, it is convenient to consider the cases of interior, boundary, and relative
embeddings separately.
The next step is to give pointwise and integral estimates for the scalar and 
boundary mean curvatures
of the new metrics $\{g_\varepsilon\}_{\varepsilon\in(0,\frac12)}$ 
cf. Propositions $1_a,1_b,$ and $1_c$. Finally, we study the family of 
operators $\Delta_{g_\varepsilon}$, giving a local a priori estimate for
solutions of the $\Delta_{g_\varepsilon}$-Poisson equation cf. Propositions
$2_a, 2_b,$ and $2_c$.

In section 2.1 we describe the process for interior embeddings, revisiting 
the construction in \cite{M1}.
In this case, the $g_*$-exponential map identifies, for some small $r>0$,
the distance neighborhood 
\[
V_*^r:=\{y\in M_*\colon\mathrm{dist}_{g_*}(y,\iota_*(K))<r\}
\]
with the 
portion of the normal bundle $\{w\in\mathcal{N}_*(K)\colon ||w||_{g_*}<r\}$. 
On $V_*^r$, these Fermi coordinates yield
good asymptotic expressions for the metric tensor $g_*$. 
These local expressions are then used to transition from $g_1$ to $g_2$
on annular regions about $\iota_1(K)$ and $\iota_2(K)$, in turn yielding a 
globally-defined metric $g_\epsilon$ on the sum, $M$, 
for each $\varepsilon\in(0,\frac12)$.

In the case of boundary and relative embeddings, however, there are two sorts of geodesics 
which must be used to visit all of the neighborhood 
$V_*^r$ from $\iota_*(K)$ --
those of $g_*$ and those of $g_*|_{\partial M_*}$. 
This complicates matters and we must provide new geometric constructions 
and estimates for a Poisson problem with mixed Dirichlet-Neuman boundary 
conditions. This analysis for boundary and relative embeddings
is carried out in sections 2.2 and 2.3, respectively.

\subsection{Interior embeddings}
Throughout this section we will only consider the case of interior embeddings; 
when $K$ is closed and embedded entirely 
within the interior $\mathring{M}_*$. By uniformly rescaling the metrics $g_1$ and $g_2$,
we may assume that 
\[
\exp^{g_*}:\{w\in\mathcal{N}_*(K)\colon||w||_{g_*}<1\}\to M_*
\]  
is a diffeomorphism onto its image.
For a fixed $\varepsilon\in(0,\frac12)$, we will give a local description  
of a gluing metric $g_\varepsilon$ on the disjoint union 
\[
\left(M_1\setminus V_1^{\varepsilon^2}\right)
\sqcup 
\left(M_2\setminus V_2^{\varepsilon^2}\right).
\]
This description will, in fact, immediately yield a globally defined metric $g_\varepsilon$
on the above disjoint union.  
We will then construct the connected sum $M_1\#_K M_2$ in such a way so that the metric 
$g_\varepsilon$ descends to it.

Let $U\subset K$ be a trivializing neighborhood for the normal bundles 
$\mathcal{N}_1(K)$ and 
$\mathcal{N}_2(K)$ with local coordinates
$z=(z^1,\dots,z^k)$.  Denote the open unit $m$-ball by
\[
D^m=\{x=(x^1,\dots,x^m)\in\mathbb{R}^m\colon|x|<1\}.
\]
The map
\[
F_*:U\times D^m\to M_*,\quad F_*(z,x):=\exp^{g_*}_{\iota_*(z)}(x)
\]
gives Fermi coordinates $(z,x)$ on a neighborhood of $\iota_*(U)$ in
$M_*$ for $*=1,2$.
Abusing notations, we write $(z,x)$ for the
coordinates on both $M_1, M_2$ and suppress the use of the bundle isomorphism
in identifying the trivializations over $U$.
These coordinates give the following local expression for the metric $g_*$
\[
g_*=g^{(*)}_{ij}dz^i dz^j+g^{(*)}_{i\alpha}dz^i dx^\alpha+
g^{(*)}_{\alpha\beta}dx^\alpha dx^\beta
\]
with the well-known expansions
\[
g^{(*)}_{ij}(z,x)=g^K_{ij}(z)+\mathcal{O}(|x|),\quad
g^{(*)}_{i\alpha}(z,x)=\mathcal{O}(|x|),\quad 
g^{(*)}_{\alpha\beta}(z,x)=\delta_{\alpha\beta}+\mathcal{O}(|x|^2).
\]  
Setting $x=\varepsilon e^{-t}\theta$ on $M_1$ and $x=\varepsilon
e^t\theta$ on $M_2$, we introduce modified polar coordinates
$(z,t,\theta)$ on a neighborhood about $\iota_*(U)$ in $M_*$ for $*=1,2$
where $\theta=(\theta^1,\dots,\theta^{m-1})$ are spherical
coordinates for the unit sphere $S^{m-1}$ and
$t\in(\log\varepsilon,-\log\varepsilon)$.  Notice that $t$ ranges
between the values $\log\varepsilon$ and $-\log\varepsilon$ as $|x|$
ranges between $\varepsilon^2$ and $1$.  We define two functions
$u_\varepsilon^{(1)},u_\varepsilon^{(2)}:(\log\varepsilon,-\log\varepsilon)\to\mathbb{R}$ by
 \[
u_\varepsilon^{(1)}(t):=\varepsilon^{\frac{m-2}{2}}e^{-\frac{m-2}{2}t}\quad\text{ and } 
	\quad u_\varepsilon^{(2)}(t):=\varepsilon^{\frac{m-2}{2}}e^{\frac{m-2}{2}t}.
\]
Using the coordinates $(z,t,\theta)$, the local expression for $g_*$ can be reorganized in the form
\[
\begin{array}{rr}
g_*=&g^{(*)}_{ij}dz^i dz^j+\left(u_\varepsilon^{(*)}\right)^{\frac{4}{m-2}}\Big{(}g^{(*)}_{tt}dt^2+
	g_{\lambda\mu}^{(*)}d\theta^\lambda d\theta^\mu+g_{t\lambda}^{(*)}dtd\theta^\lambda\Big{)}\\
{}&+g_{it}^{(*)}dz^i dt+g_{i\lambda}^{(*)}dz^i d\theta^\lambda.
\end{array}
\]
The asymptotics now take the form
\[
\begin{array}{lll}
g^{(*)}_{ij}(z,t,\theta)=g^K_{ij}(z)+\mathcal{O}(|x|), 
	& g^{(*)}_{\lambda\mu}(z,t,\theta)=g^{(\theta)}_{\lambda\mu}(\theta)+\mathcal{O}(|x|),
	& g_{tt}^{(*)}(z,t,\theta)=1+\mathcal{O}(|x|^2)\\
g^{(*)}_{i\lambda}(z,t,\theta)=\mathcal{O}(|x|^2), 
	& g^{(*)}_{it}(z,t,\theta)=\mathcal{O}(|x|^2),
	& g^{(*)}_{i\lambda}(z,t,\theta)=\mathcal{O}(|x|^2)
\end{array}
\]
where $g^{(\theta)}_{\lambda\mu}$ denotes a component of the standard
round metric on the unit sphere $S^{m-1}$ in the spherical coordinates
$(\theta^1,\dots,\theta^{m-1})$.

We are now ready to perform the interpolation between $g_1$ and $g_2$.  
Fix a cut-off smooth function $\xi:(\log\varepsilon,-\log\varepsilon)\to[0,1]$
which is non-increasing and takes the value 1 on $(\log\varepsilon,-1]$ and 0 on
$[1,-\log\varepsilon)$.  Similarly, let $\eta:(\log\varepsilon,-\log\varepsilon)\to[0,1]$ 
be a non-increasing, smooth function which takes 
the value 1 on $(\log\varepsilon,-\log\varepsilon-1]$ 
and the value 0 on $(-\log\varepsilon-\frac12,-\log\varepsilon)$. 
\begin{figure}[htb!]
\begin{center}
\begin{picture}(0,0)
\put(100,50){$\xi$}
\put(220,50){$\eta$}
\put(5,-5){$\log\varepsilon$}
\put(100,-5){$-1$}
\put(140,-5){$1$}
\put(240,-5){$-\log\varepsilon$}
\end{picture}
\includegraphics[height=3.2cm]{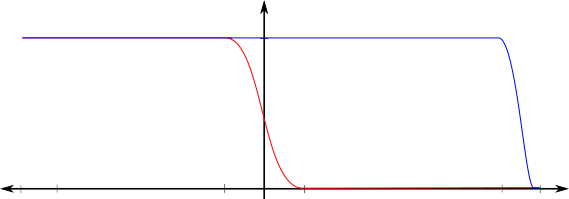}
\end{center}
\caption{The cut-off functions $\xi$ and $\eta$}
\label{fig:xieta}
\end{figure}

\noindent
Define a function 
$u_\varepsilon:(\log\varepsilon,-\log\varepsilon)\to\mathbb{R}$ by  
\[
u_\varepsilon(t)=\eta(t)u_\varepsilon^{(1)}+\eta(-t)u_\varepsilon^{(2)}.
\]
Finally, for each $\varepsilon\in(0,\frac12)$, define a metric $g_\varepsilon$ by
\[
\begin{array}{rr}
g_\varepsilon(z,t,\theta)=&(\xi g_{ij}^{(1)}+(1-\xi)g_{ij}^{(2)})dz^i dz^j+
	u_\varepsilon^{\frac{4}{n-2}}\Big{(} (\xi g^{(1)}_{tt} +(1-\xi)g^{(2)}_{tt})dt^2\\
{}&+(\xi g_{\lambda\mu}^{(1)}+
	(1-\xi)g_{\lambda\mu}^{(2)})d\theta^\lambda d\theta^\mu+ (\xi g_{t\lambda}^{(1)}+
	(1-\xi)g_{t\lambda}^{(2)})dtd\theta^\lambda]\Big{)}\\
{}&+(\xi g_{it}^{(1)}+(1-\xi)g_{it}^{(2)})dz^idt+(\xi g_{i\lambda}^{(1)}+
	(1-\xi)g_{i\lambda}^{(2)})dz^i d\theta^\lambda.
\end{array}
\]
This defines a metric $g_\varepsilon$ on the tubular annuli 
\[
V_*^1\setminus\overline{V_*^{\varepsilon^2}}=
	\{y\in M_*|\varepsilon^2<\mathrm{dist}_{g_*}(y,\iota_*(K))<1\}
\]
for $*=1,2$. 
We set $g_\varepsilon=g_*$ on $M_*\setminus \overline{V_*^1}$.
This gives well-defined metric $g_\varepsilon$ on the disjoint union
$(M_1\setminus V_*^{\varepsilon^2}) \sqcup
(M_2\setminus V_*^{\varepsilon^2})$.

Now we are ready to describe the generalized connected sum $M=M_1\#_K M_2$.
See Figure \ref{fig:boundarysurgery} for a picture in the boundary embedding case.
Let $\Phi:\mathcal{N}_1(K)\to\mathcal{N}_2(K)$ be the isomorphism of the
normal bundles given in the hypothesis of Theorem $1_a$. 
For each $\varepsilon\in(0,\frac12)$, consider the auxiliary fiber-wise mapping 
$\Psi_\varepsilon$ given by
\begin{align}
\Psi_\varepsilon&:\left(\mathcal{N}_1(K)\setminus\{0\}\right)\sqcup\left(\mathcal{N}_2(K)\setminus\{0\}\right)\to
\left(\mathcal{N}_1(K)\setminus\{0\}\right)\sqcup\left(\mathcal{N}_2(K)\setminus\{0\}\right)\notag\\
\Psi_\varepsilon&(z,t,\theta):=
\begin{cases}
\Phi(z,-t,\theta)&\text{ if }(z,t,\theta)\in\mathcal{N}_1(K)\\
\Phi^{-1}(z,-t,\theta)&\text{ if }(z,t,\theta)\in\mathcal{N}_2(K).
\end{cases}\notag
\end{align}
Notice that, in the Fermi coordinates $(z,x)$, this mapping can be expressed as
$\Psi_\varepsilon(z,x)=\Phi_\varepsilon(z,\frac{\varepsilon^2}{|x|^2}x)$.
We define
\[
M_\varepsilon:=\left((M_1\setminus V_*^{\varepsilon^2}) \sqcup
(M_2\setminus V_*^{\varepsilon^2})\right) /\sim_\varepsilon
\]
where we introduce the equivalence relation $\sim_\varepsilon$
on the disjoint union
\[
\left(V_1^1\setminus \overline{V_1^{\varepsilon^2}}\right)\sqcup\left(V_2^1\setminus \overline{V_2^{\varepsilon^2}}\right)
\]
as follows:
If $y\in V_1^1\setminus \overline{V_1^{\varepsilon^2}}$, then
$y\sim_\varepsilon (F_2\circ\Psi_\varepsilon\circ F_1^{-1})(y)$. 

Observing that $g_\varepsilon$ is invariant under $\Psi_\varepsilon$, the 
metric descends to $M_\varepsilon$. We will continue to denote this metric by 
$g_\varepsilon$. Since its diffeomorphism type does
not depend on $\varepsilon$, we will drop the subscript when referring to 
the generalized connected sum and simply write $M=M_\varepsilon$. This finishes 
the definition of the family of Riemannian manifolds $(M,g_\varepsilon)$.
The coordinates $(z,t,\theta)$ which were originally used on $M_1$ will
continue to be used as coordinates on $M$. 
We will require a piece of notation for certain subsets of the gluing region in $M$:
For each $\varepsilon>0$ and $a,b\geq 0$, we denote by
\[
T^\varepsilon(a,b)=\{(z,t,\theta)\in M\colon\log\varepsilon+a\leq t\leq-\log\varepsilon-b\}.
\]

Before we approach the problem of producing a solution to the system 
(\ref{eq:RYP2}) on $(M,g_\varepsilon)$, we will require two geometrical 
properties of the family $\{g_\varepsilon\}_{\varepsilon\in(0,\frac12)}$. 
In the present case of interior embeddings, these properties are
identical to those found in \cite{M1}.
Propositions $1_a$ and $2_a$ summarize the results of
\cite[Section 4]{M1}.
\begin{prop1A}{\rm (cf. \cite[Proposition 2]{M1})}
There is a constant $C>0$ such that
\[
|R_{g_\varepsilon}|\leq C\varepsilon^{-1}\cosh^{1-m}(t)
\]
on $T^\varepsilon(0,0)$ and
\[
\int_M|R_{g_\varepsilon}|d\mu_{g_\varepsilon}=\mathcal{O}(\varepsilon^{m-2}).
\]
Moreover, the constant $C$ depends only on $(K,g_K),(M_1,g_1),$ and $(M_2,g_2)$.
\end{prop1A}

The other feature of $g_\varepsilon$ we will need is an $\varepsilon$-uniform 
a priori estimate for solutions of the $\Delta_{g_\varepsilon}$-Poisson equation on the neck.
Indeed, the family of operators $\{\Delta_{g_\varepsilon}\}_{\varepsilon\in(0,\varepsilon_0)}$ 
is not uniformly elliptic and the estimate is tailor made for the family of metrics $g_\varepsilon$. 
To state it, we will fix a family of 
weighting functions $\psi_\varepsilon:M\to\mathbb{R}$ satisfying
\[
\psi_\varepsilon=
	\begin{cases}
	\varepsilon \cosh(t)&\text{ on } T^\varepsilon(1,1)\\
	1&\text{ on } M\setminus T^\varepsilon(0,0)
	\end{cases}
\]
and varying smoothly between the values on
$T^\varepsilon(0,0)\setminus T^\varepsilon(1,1)\subset M$ (see Figure \ref{fig:psi}).
For a given parameter $\gamma\in(0,m-2)$ consider the following weighted Banach spaces
\[
\mathcal{C}^0_\gamma(M):=\{v\in\mathcal{C}^0(M)\colon
||v||_{\mathcal{C}^0_\gamma(M)}:=\sup_M|\psi^\gamma_\varepsilon v|<\infty\}.
\]
Note that, for fixed $\varepsilon,\gamma$, 
the two norms $||\cdot||_{\mathcal{C}^0_\gamma(M)}$ and $\sup_M|\cdot|$ are equivalent, though the equivalence is not uniform in $\varepsilon$.

\begin{prop2A}{\rm (cf. \cite[Proposition 4]{M1})}
Given $\gamma\in(0,m-2)$, there are constants
$\alpha_1,\alpha_2>0$ and $C>0$ satisfying the following statement for
all $\varepsilon\in(0,e^{-\max\{\alpha_1,\alpha_2\}})$.  If $v,f\in
\mathcal{C}^0(T^\varepsilon(\alpha_1,\alpha_2))$ satisfy
$\Delta_{g_\varepsilon}v=f$, then
\[
v\leq 
C\psi_\varepsilon^{-\gamma}\left(
	\sup_{T^\varepsilon(\alpha_1,\alpha_2)}|\psi^{\gamma+2}_\varepsilon f|+
	\sup_{\partial T^\varepsilon(\alpha_1,\alpha_2)}|\psi^\gamma_\varepsilon v|\right)
\]
pointwise on $T^\varepsilon(\alpha_1,\alpha_2)$ and
\[
||v||_{\mathcal{C}^0_\gamma(T^\varepsilon(\alpha_1,\alpha_2))}\leq 
	C\left(||f||_{\mathcal{C}^0_{\gamma+2}(T^\varepsilon(\alpha_1,\alpha_2))}+
	||v||_{\mathcal{C}^0_\gamma(\partial T^\varepsilon(\alpha_1,\alpha_2))}\right).
\]
Moreover, the constants $\alpha_1,\alpha_2,$ and $C$ depend only on 
$\gamma,(K,g_K),(M_1,g_1),$ and $(M_2,g_2)$.
\end{prop2A}
\begin{figure}[htb!]
\begin{center}
\begin{picture}(0,0)
\put(5,-3){$\log\varepsilon$}
\put(255,-3){$-\log\varepsilon$}
\put(150,76){$1$}
\put(0,65){$\psi_\varepsilon$}
\put(150,17){$\varepsilon$}
\end{picture}
\includegraphics[height=3.2cm]{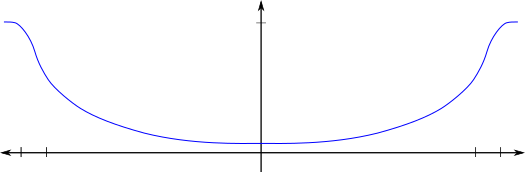}
\caption{The weighting function $\psi_\varepsilon$}
\label{fig:psi}
\end{center}
\vspace{-.5cm}
\end{figure}

\subsection{Boundary embeddings}%
In this section, we consider the setting of Theorems $1_b$ and $2_b$
-- when $\iota_*(K)$ lies entirely within $\partial M_*$.  As in section 2.1, 
we begin by defining the family of metrics $\{g_\varepsilon\}_{\varepsilon\in(0,\frac12)}$.
After uniformly rescaling the metrics $g_1$ and $g_2$, we may assume that both
\begin{align}
\exp^{g_*|_{\partial M_*}}:&\{w\in\mathcal{N}_*^\partial(K)\colon ||w||_{g_*}<1\}\to\partial M_*\notag\\
\exp^{g_*}:&\{w\in\mathcal{N}(\partial M_*)\colon ||w||_{g_*}<1\}\to M_*\notag
\end{align}
are diffeomorphisms onto their images for $*=1,2$.

Let $U\subset K$ be a trivializing neighborhood for the bundles $\mathcal{N}_1^\partial(K)$
and $\mathcal{N}_2^\partial(K)$ with local coordinates $z=(z^1,\dots,z^k)$.  The map
\[
F_*':U\times D^{m-1}\to\partial M_*,\quad F_*'(z,x'):=\exp^{g_*|_{\partial M_*}}_{\iota_*(z)}(x')
\]
gives Fermi coordinates $(z,x')$ for the boundary $\partial M_*$.
We denote the upper unit $m$-ball by 
\[
D^m_+:=\{(x',x^m)\in D^{m-1}\times\mathbb{R}\colon |(x',x^m)|<1 \text{ and } x^m\geq0\}.
\]
We identify the last component of $D^m_+$ with the inward normal
$\mathcal{N}(\partial M_*)$. Now the map
\[
F_*:U\times D^m_+\to M_*,\quad F(z,x',x^m):=\exp^{g_*}_{F_*'(z,x')}(x^m)
\]
gives coordinates $(z,x',x^m)$ on a neighborhood of $\iota_*(U)$ in $M_*$ for $*=1,2$.
We will write $x=(x',x^m)$ and $|x|:=\sqrt{|x'|^2+|x^m|^2}$.
In the coordinates $(z,x)$, the metric can be written as
\[
g_*=g^{(*)}_{ij}dz^i dz^j+g^{(*)}_{k\gamma}dz^k dx^\gamma+
g^{(*)}_{\alpha\beta}dx^\alpha dx^\beta
\]
with the following well-known expansions
\[
g^{(*)}_{ij}(z,x)=g^K_{ij}(z)+\mathcal{O}(|x|),\quad
g^{(*)}_{k\gamma}(z,x)=\mathcal{O}(|x|),\quad
g^{(*)}_{\alpha\beta}(z,x)=\delta_{\alpha\beta}+\mathcal{O}(|x|).
\]

We again introduce modified polar coordinates $(z,t,\theta)$ by 
setting $x=\varepsilon e^{-t}\theta$ on $M_1$ 
and $x=\varepsilon e^t\theta$ on $M_2$.
Here $\theta=(\theta^1,\dots,\theta^{m-1})$ are spherical coordinates on 
the unit upper hemisphere 
\[
S^{m-1}_+:=\{\theta\in S^{m-1}\colon 0\leq\theta^1\leq\frac{\pi}{4}\}
\]
and $t\in(\log\varepsilon,-\log\varepsilon)$. 
Notice that the boundary $\partial S^{m-1}_+$ can be identified with the set
$\{\theta\in S^{m-1}\colon \theta^1=\frac{\pi}{4}\}$.
Using the coordinates $(z,t,\theta)$, the local expression for $g_*$ can be reorganized in the form
\[
\begin{array}{rr}
g_*=&g^{(*)}_{ij}dz^i dz^j+\left(u_\varepsilon^{(*)}\right)^{\frac{4}{m-2}}\Big{(}g^{(*)}_{tt}dt^2+
	g_{\lambda\mu}^{(*)}d\theta^\lambda d\theta^\mu+g_{t\lambda}^{(*)}dtd\theta^\lambda\Big{)}\\
{}&+g_{it}^{(*)}dz^i dt+g_{i\lambda}^{(*)}dz^i d\theta^\lambda
\end{array}
\]
where $u_\varepsilon^{(*)}$ are defined as in section 2.1.  The asymptotics now take the form
\[
\begin{array}{lll}
g^{(*)}_{ij}(z,t,\theta)=g^K_{ij}(z)+\mathcal{O}(|x|), 
	& g^{(*)}_{\lambda\mu}(z,t,\theta)=g^{(\theta)}_{\lambda\mu}(\theta)+\mathcal{O}(|x|),
	& g_{tt}^{(*)}(z,t,\theta)=1+\mathcal{O}(|x|)\\
g^{(*)}_{i\lambda}(z,t,\theta)=\mathcal{O}(|x|), 
	& g^{(*)}_{it}(z,t,\theta)=\mathcal{O}(|x|),
	& g^{(*)}_{i\lambda}(z,t,\theta)=\mathcal{O}(|x|)
\end{array}
\]
where $g^{(\theta)}_{\lambda\mu}$ denotes a component of the standard round metric
on the upper unit hemisphere $S^{m-1}_+$ in the spherical coordinates $(\theta^1,\dots,\theta^{m-1})$.

Using the same cutoff functions $\xi$ 
and $\eta$ we introduced in the case of interior embeddings, define the function
$u_\varepsilon$ as in section 2.1. For each $\varepsilon\in(0,\frac12)$, set
\[
\begin{array}{rr}
g_\varepsilon(z,t,\theta)=&(\xi g_{ij}^{(1)}+(1-\xi)g_{ij}^{(2)})dz^i dz^j+
	u_\varepsilon^{\frac{4}{n-2}}\Big{(} (\xi g^{(1)}_{tt} +(1-\xi)g^{(2)}_{tt})dt^2\\
{}&+(\xi g_{\lambda\mu}^{(1)}+
	(1-\xi)g_{\lambda\mu}^{(2)})d\theta^\lambda d\theta^\mu+ (\xi g_{t\lambda}^{(1)}+
	(1-\xi)g_{t\lambda}^{(2)})dtd\theta^\lambda]\Big{)}\\
{}&+(\xi g_{it}^{(1)}+(1-\xi)g_{it}^{(2)})dz^idt+(\xi g_{i\lambda}^{(1)}+
	(1-\xi)g_{i\lambda}^{(2)})dz^i d\theta^\lambda.
\end{array}
\]
This defines a metric $g_\varepsilon$ on the tubular annuli 
$V_*^1\setminus\overline{V_*^{\varepsilon^2}}$ for $*=1,2$. 
We set $g_\varepsilon=g_*$ on $M_*\setminus \overline{V_*^1}$.
This gives well-defined metric $g_\varepsilon$ on the disjoint union
$(M_1\setminus V_1^{\varepsilon^2}) \sqcup(M_2\setminus V_2^{\varepsilon^2})$.

Now we are ready to describe the generalized connected sum $M=M_1\#_K M_2$.
See Figure \ref{fig:boundarysurgery} for a visual description.
Let $\Phi:\mathcal{N}_1(K)\to\mathcal{N}_2(K)$ be the isomorphism of the
normal bundles given in the hypothesis of Theorem $1_b$. 
For each $\varepsilon\in(0,\frac12)$, consider mapping 
$\Psi_\varepsilon$ given by
\begin{align}
\Psi_\varepsilon&:\left(\mathcal{N}_1(K)\setminus\{0\}\right)\sqcup\left(\mathcal{N}_2(K)\setminus\{0\}\right)\to
\left(\mathcal{N}_1(K)\setminus\{0\}\right)\sqcup\left(\mathcal{N}_2(K)\setminus\{0\}\right)\notag\\
\Psi_\varepsilon&(z,t,\theta):=
\begin{cases}
\Phi(z,-t,\theta)&\text{ if }(z,t,\theta)\in\mathcal{N}_1(K)\\
\Phi^{-1}(z,-t,\theta)&\text{ if }(z,t,\theta)\in\mathcal{N}_2(K).
\end{cases}\notag
\end{align}
We define
\[
M:=\left((M_1\setminus V_*^{\varepsilon^2}) \sqcup
(M_2\setminus V_*^{\varepsilon^2})\right) /\sim_\varepsilon
\]
where we introduce equivalence relation $\sim_\varepsilon$
on the disjoint union
\[
\left(V_1^1\setminus \overline{V_1^{\varepsilon^2}}\right)\sqcup\left(V_2^1\setminus \overline{V_2^{\varepsilon^2}}\right)
\]
as follows:
If $y\in V_1^1\setminus \overline{V_1^{\varepsilon^2}}$, then
$y\sim_\varepsilon (F_2\circ\Psi_\varepsilon\circ F_1^{-1})(y)$. 

Observing that $g_\varepsilon$ is invariant under $\Psi_\varepsilon$, the 
metric descends to $M$. This finishes 
the definition of the family of Riemannian manifolds $(M,g_\varepsilon)$.
\begin{figure}[htb!]
\begin{center}
  \begin{picture}(0,0)
\put(10,260){$M_1$}
\put(360,260){$M_2$}
\put(20,70){\large{$M$}}
\put(30,150){\small{$t=\log\varepsilon$}}
\put(25,5){$\partial M$}
\put(180,15){$T^\varepsilon(\alpha_1,\alpha_2)$}
\put(170,-10){$T^\varepsilon(0,0)$}
\put(100,140){\small{$t=\log\varepsilon+\alpha_1$}}
\put(182,130){\small{$t=0$}}
\put(230,140){$t=-\log\varepsilon-\alpha_2$}
\put(315,150){$t=-\log\varepsilon$}
\end{picture}
\includegraphics[height=4in]{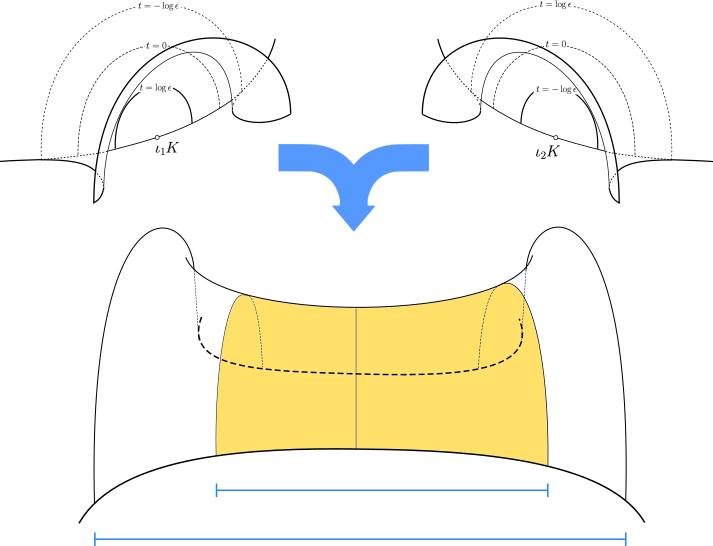}
\end{center}
\caption{The construction of $(M,g_{\epsilon})$ and the neck region 
	$T^\varepsilon(\alpha_1,\alpha_2)$}
\label{fig:boundarysurgery}
\end{figure}
\subsubsection{The scalar and boundary mean curvatures of $g_\varepsilon$}
The next step is to produce analogs of propositions $1_a$ and $2_a$ for the 
case of boundary embeddings.  In addition to the
estimate for the scalar curvature $R_{g_\varepsilon}$, 
we will require a similar estimate for the
boundary mean curvature $H_{g_\varepsilon}$. 
\begin{prop1B}
There is a constant $C>0$, independent of $\varepsilon$, such that
\[
|R_{g_\varepsilon}|\leq C\varepsilon^{-1}\cosh^{1-m}(t),\quad |H_{g_\varepsilon}|\leq C\cosh^{2-m}(t)
\]
on $T^\varepsilon(0,0)$ and
\[
\int_M|R_{g_\varepsilon}|d\mu_{g_\varepsilon}=\mathcal{O}(\varepsilon^{m-2}),\quad\int_{\partial M}|H_{g_\varepsilon}|d\sigma_{g_\varepsilon}=\mathcal{O}(\varepsilon^{m-2}).
\]
\end{prop1B} 
\begin{proof}
The estimate on $R_{g_\varepsilon}$ can be obtained by an argument identical to the one found in \cite{M1} so we will only present the estimate on $H_{g_\varepsilon}$.

Let us first restrict our attention to 
the portion of $T^\varepsilon(0,0)$ where $\log \varepsilon +1\leq t\leq -1$.
On this portion of the neck the cut off function $\xi$ takes take the value 1 and 
$g_\varepsilon$ take the form 
\begin{align}
g_\varepsilon(z,x)=&g_{ij}^{(1)}(z,x)dz^i dz^j+
(1+\varepsilon^{m-2}|x|^{2-m})^{\frac{4}{m-2}}g_{\alpha\beta}^{(1)}(z,x)dx^\alpha dx^\beta 
\notag\\
{}&+g_{i\gamma}^{(1)}(z,x)dz^i dx^\gamma\notag.
\end{align}
We will drop the upper indices and write $g_{ij}=g^{(1)}_{ij}$, unless otherwise mentioned.

It will be useful to introduce a new formal parameter $\phi>0$ and introduce the following
two metrics on the neck $T^\varepsilon(0,0)$
\begin{align}
g(z,x,\phi)&=g_{ij}^{(1)}(z,x)dz^i dz^j+
(1+\phi)^{\frac{4}{m-2}}g_{\alpha\beta}^{(1)}(z,x)dx^\alpha dx^\beta 
+g_{i\gamma}^{(1)}(z,x)dz^i dx^\gamma\notag\\
\tilde{g}(z,\phi)&=g_{ij}^K(z)dz^idz^j+(1+\phi)^{\frac{4}{m-2}}\delta_{\alpha\beta}dx^\alpha dx^\beta\notag
\end{align}
If we choose $\phi=\varepsilon^{m-2}|x|^{2-m}$ in the formula for $g(z,x,\phi)$, observe that we
recover the gluing metric $g_\varepsilon$. Furthermore, we obtain the original metric
$g_1$ if we take $\phi=0$ in the formula for $g(z,x,\phi)$.
Our goal is to compute the boundary mean curvatures of the product metrics 
$\tilde{g}(z,\phi)$ and $\tilde{g}(z,0)$ 
then compare them to the corresponding curvatures of $g(z,x,\phi)$ and $g(z,x,0)$ in order to 
arrive at the desired estimate.

The Taylor expansions for the metric components now take the form
\[
g_{ij}(z,x,\phi)=\tilde{g}_{ij}(z,\phi)+\mathcal{O}(|x|),\quad
g_{\alpha\beta}(z,x,\phi)=\tilde{g}_{\alpha\beta}(z,\phi)+\mathcal{O}(|x|),\quad
g_{i\alpha}(z,x,\phi)=\mathcal{O}(|x|)
\]
Inspired by \cite{M1}, it will be convenient to adopt the following variant of big-o notation.
\begin{definition}
Let $a\in\mathbb{N}_0$ and let $f$ be a function of $z,x,$ and $\phi$. 
We say $f$ belongs to the class $\mathcal{A}_a$ if 
\[
|f(z,x,\phi)|\leq C|x|^a\quad\text{ and }\quad
|f(z,x,\phi)-f(z,x,0)|\leq C|x|^a|\phi|
\]
for some constant $C>0$.
\end{definition}
Notice that the product of an $\mathcal{A}_a$ function with an $\mathcal{A}_b$ function lies
in the class $\mathcal{A}_{a+b}$.
For the coefficients of the inverse of $g_\phi$, we may write
\[
g^{ij}(z,x,\phi)=\tilde{g}^{ij}(z,\phi)+\mathcal{A}_1,\quad
g^{\alpha\beta}(z,x,\phi)=\tilde{g}^{\alpha\beta}(z,\phi)+\mathcal{A}_1,\quad
g^{i\alpha}(z,x,\phi)=\mathcal{A}_1.
\]
Continuing, for any derivative of a component of $g(z,x,\phi)$, we have
\[
\partial_a g_{rs}(z,x,\phi)=
	\partial_a \tilde{g}_{rs}(z,\phi)+\mathcal{A}_0+|\nabla \phi|\mathcal{A}_1
\]
where $g_{rs}(z,x,\phi)$ may be any component of $g(z,x,\phi)$ 
in the coordinates $(z,x)$ and 
$\partial_a$ may be any derivative with respect to $z^i$ $(i=1,\dots,k)$ or 
$x^\alpha$ $(\alpha=1,\dots,m)$. 
Writing $\Gamma$ for a Christoffel symbol of $g(z,x,\phi)$ and 
$\tilde{\Gamma}$ for the 
corresponding symbol of $\tilde{g}(z,x)$, one may use the above computation with
the Kozul formula to find
\[
\Gamma=\tilde{\Gamma}+\mathcal{A}_0+|\nabla \phi|\mathcal{A}_1.
\]

Now consider the product metric $\tilde{g}(z,\phi)$. We have $H_{\tilde{g}(z,0)}=0$
since the boundary mean curvature of $(B^{m}_+(0),\delta_{\alpha\beta})$ vanishes.
Using the formula for boundary mean curvature under conformal change,
\begin{align}
H_{\tilde{g}(z,\phi)}&=\frac{1}{2c_n}(1+\phi)^{\frac{-m}{m-2}}\partial_{\nu}\phi\notag\\
{}&=-\frac{m-2}{2c_n}\lim_{x^m\to0}(1+\phi)^{\frac{-m}{m-2}}\varepsilon^{m-2}|x|^{-m}(x^m)\notag\\
{}&=0\notag,
\end{align}
where $x^m$ is the last coordinate of $x$.
Next we compute $H_{g(z,x,\phi)}$ in terms of $H_{\tilde{g}(z,\phi)}$ using the above expressions
for the Christoffel symbols
\begin{align}
H_{g(z,x,\phi)}&=g^{rs}(z,x,\phi)\Gamma^l_{rs}g_{lm}(z,x,\phi)\notag\\
{}&=(\tilde{g}^{rs}(z,\phi)+\mathcal{A}_1)(\tilde{\Gamma}^l_{rs}+
	\mathcal{A}_0+|\nabla \phi|\mathcal{A}_1)(\tilde{g}_{lm}(z,\phi)+\mathcal{A}_1)\notag\\
{}&=H_{\tilde{g}(z,\phi)}+\mathcal{A}_0+|\nabla \phi|\mathcal{A}_1\notag.
\end{align}
Taking $\phi=0$ in the above equation and subtracting from $H_{g(z,x,\phi)}$ yields
\[
|H_{g(z,x,\phi)}-H_{g(z,x,0)}|\leq|H_{\tilde{g}(z,\phi)}-H_{\tilde{g}(z,0)}|+C_1(|\phi|+|X||\nabla\phi|)
\]
for some positive constant $C_1$ independent of $\varepsilon$, coming from the definition 
of $\mathcal{A}_0$ and $\mathcal{A}_1$.
Now setting $\phi=\varepsilon^{m-2}|x|^{2-m}$ and recalling that $H_{\tilde{g}(z,\phi)}$
and $H_{\tilde{g}(z,0)}$ both vanish, we find
\[
|H_{g_\varepsilon}-H_{g_1}|\leq C_1e^{(m-2)t}
\]
concluding our work for $t\in(\log\varepsilon+1,-1)$.

Next, we move on to the portion $\{\log\varepsilon\leq t\leq \log\varepsilon+1\}$. 
On this part of the neck $\xi$ is still constant, but
the normal conformal factor $u_\varepsilon$ is effected by the cutoff function $\eta$.
However, since $\eta$ and its derivatives are uniformly bounded, it is straightforward 
to check that the estimate $|H_{g_{\varepsilon}}|\leq C_2e^{(m-2)t}$ holds here, 
where $C_2$ is a constant independent of epsilon.

On the portion of the neck $\{-1\leq t\leq0\}$, $\eta$ vanishes and now
the cutoff function $\xi$ effects all components of $g_\varepsilon$. However, we can still write
\begin{align}
g_\varepsilon(z,t,\theta)=&(g^{(1)}_{ij}+\mathcal{O}(|x|))dz^i dz^j+(1+\varepsilon^{m-2}|x|^{2-m})^{\frac{4}{m-2}}(g^{(1)}_{\alpha\beta}+\mathcal{O}(|x|))dx^\alpha dx^\beta\notag\\
{}&+(g^{(1)}_{k\gamma}+\mathcal{O}(|x|))dz^k dx^\gamma\notag.
\end{align}
In general, if two metrics are related by $g'=g+\mathcal{O}(|X|)$, we have 
$\Gamma'=\Gamma+\mathcal{O}(1)$ for any Christoffel symbol $\Gamma'$ of 
$g'$ and corresponding symbol $\Gamma$ of $g$. 
Hence the boundary mean curvatures satisfy
$|H_{g'}-H_g|=\mathcal{O}(1)$. Applying this fact to compare $g_\varepsilon$ and $g_1$,
we find that the mean curvature $H_{g_\varepsilon}$ is uniformly bounded in $\varepsilon$.
Since $t$ is small in absolute value
on this portion of the neck, we may choose $C_3>0$, independent of $\varepsilon$, so that
\[
|H_{g_\varepsilon}-H_{g_1}|\leq C_3e^{(m-2)t},
\]

To summarize our efforts, for $t\in(\log\varepsilon,0]$ and taking 
$C_4=\max(C_1,C_2,C_3)$, we have
\[
|H_{g_\varepsilon}-H_{g_1}|\leq C_4e^{(m-2)t}.
\]
Repeating these computations for the portion of the neck 
$\{0\leq t\leq-\log\varepsilon\}$, one can show that there is a 
constant $C_5$, independent of $\varepsilon$, satisfying
\[
|H_{g_\varepsilon}-H_{g_2}|\leq C_5e^{(2-m)t}
\]
for such $t$.
Recalling that $H_{g_*}\equiv0$ for $*=1,2$, these two inequalities 
give the pointwise estimate claimed in Lemma $1_b$ 
where the constant is given by $C=\max(C_4,C_5)$.

We conclude the proof by using our pointwise estimate to obtain 
the $L^1$ estimate on the boundary mean curvature 
\begin{align}
\int_{\partial M}|H_{g_\varepsilon}|d\sigma_{g_\varepsilon}&\leq 
	C\cdot\int_{\partial M\cap T^\varepsilon(0,0)}
	\cosh^{2-m}(t)d\sigma_{g_\varepsilon}\notag\\
{}&=C\cdot\mathrm{Vol}_{g_K}(K)\omega_{m-2}\varepsilon^{(m-2)}
	\int_{\log(\varepsilon)}^{-\log(\varepsilon)}e^{(2-m)t}\cosh^{(2-m)}(t)dt\notag\\
{}&\leq C'\cdot\mathrm{Vol}_{g_K}(K)\omega_{m-2}\varepsilon^{m-2}\notag
\end{align}
where $\omega_{m-2}$ denotes the volume of the unit sphere $S^{m-2}$ and $C'$ is 
another positive constant independent of $\varepsilon$.
\end{proof}

\subsubsection{Local Expression for $\Delta_{g_\varepsilon}$ and the Barrier Function $\phi_\delta$}
Before we can state our analogue of the a priori estimate Proposition $2_a$ for the 
boundary embedding case, we will need to construct a particular barrier function. 
First we define a function on the unit upper hemisphere $S_+^{m-1}$ in
spherical coordinates $\beta(\theta):=(L+1)-L\cos(\theta^1)$ where $L>0$ is a constant
to be determined. Notice that 
$\beta$ satisfies
\[
\begin{cases}
\Delta_{\theta}\beta(\theta)=-(m-1)L\cos(\theta^1)& \text{ in } S_+^{m-1}\\
\partial_{\theta^{1}}\beta(\theta)=\beta(\theta)&\text{ on } \partial S_+^{m-1}
\end{cases}
\]
and $1\leq\beta(\theta)\leq L+1$ in $S_+^{m-1}$. Now, for a fixed parameter 
$\delta\in(\frac{2-m}{2},\frac{m-2}{2})$, we define the function
on the gluing region by
\[
\phi_\delta(z,t,\theta):=\begin{cases}
	\frac{\cosh^\delta(t)}{u_\varepsilon(t)}\beta(\theta)&\text{ if }\delta\leq0\\
	\frac{\cosh(\delta t)}{u_\varepsilon(t)}\beta(\theta)&\text{ if }\delta\geq0
	\end{cases}
\]
which is a version of the barrier function used in \cite{M1}, modified for the
present case of boundary embeddings. The following lemma states 
the key properties of $\phi_\delta$ which we will need for the a priori estimate.
\begin{lemma}
Let $\delta\in(\frac{2-m}{2},\frac{m-2}{2})$. There exists a choice of
parameters $\alpha_1,\alpha_2>1$, $L>0$,
and a constant $C>0$ so that
\[
\begin{array}{rll}
\Delta_{g_\varepsilon}\phi_\delta&\leq
	-Cu_\varepsilon^{\frac{-4}{m-2}}\phi_\delta&
	\text{ in } T^\varepsilon(\alpha_1,\alpha_2)\\
\partial_\nu\phi_\delta&\geq 
	\frac12u_\varepsilon^{\frac{-2}{m-2}}\phi_\delta &
	\text{ on }\partial M\cap T^\varepsilon(\alpha_1,\alpha_2)
\end{array}
\]
is satisfied for all $\varepsilon\in(0,e^{-\max(\alpha_1,\alpha_2)})$.
\end{lemma}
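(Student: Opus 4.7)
The plan is to first derive both inequalities on the product ``model metric''
\[
\hat g_\varepsilon \;:=\; g^K_{ij}(z)\,dz^i dz^j \;+\; u_\varepsilon(t)^{\frac{4}{m-2}}\bigl(dt^2 + g^{(\theta)}_{\lambda\mu}d\theta^\lambda d\theta^\mu\bigr),
\]
then treat the actual $g_\varepsilon$ as a perturbation of $\hat g_\varepsilon$ using the $O(|x|)$ asymptotic expansions from Section 2.2. Since $\phi_\delta = f(t)\beta(\theta)/u_\varepsilon(t)$ has no $z$-dependence, only the $(t,\theta)$-factor of $\hat g_\varepsilon$ contributes. Applying the conformal Laplacian identity in dimension $m$ to the conformal cylinder factor and simplifying yields the clean expression
\[
u_\varepsilon^{\frac{4}{m-2}}\Delta_{\hat g_\varepsilon}\phi_\delta \;=\; \phi_\delta\!\left(\frac{f''}{f} - \frac{u_\varepsilon''}{u_\varepsilon} + \frac{\Delta_\theta\beta}{\beta}\right).
\]

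For $\alpha_1,\alpha_2 \ge 1$ and $t\in[\log\varepsilon+\alpha_1,\,-\log\varepsilon-\alpha_2]$, both cutoffs satisfy $\eta(\pm t)\equiv 1$, so $u_\varepsilon(t) = 2\varepsilon^{(m-2)/2}\cosh\tfrac{m-2}{2}t$ and hence $u_\varepsilon''/u_\varepsilon \equiv (m-2)^2/4$. A direct check in the two defining cases for $f$ yields $f''/f \le \delta^2$ throughout, so that $f''/f - u_\varepsilon''/u_\varepsilon \le \delta^2 - (m-2)^2/4 =: -c_0 < 0$ since $|\delta| < (m-2)/2$. Combined with $\Delta_\theta\beta/\beta \le 0$, this gives the model interior inequality with constant $c_0$. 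For the boundary, the $\hat g_\varepsilon$-unit outward normal is $u_\varepsilon^{-2/(m-2)}\partial_{\theta^1}$, and the identity $\partial_{\theta^1}\beta = \beta$ on $\partial S^{m-1}_+$ (satisfied by the choice of $L$) produces $\partial_\nu\phi_\delta = u_\varepsilon^{-2/(m-2)}\phi_\delta$ exactly in the model.

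Next I would pass from $\hat g_\varepsilon$ to $g_\varepsilon$. The Taylor expansions of Section 2.2 say that each component of $g_\varepsilon - \hat g_\varepsilon$ (after accounting for the angular-block conformal factor $u_\varepsilon^{4/(m-2)}$) is of order $|x|$, and the same order is inherited by the corresponding tangential derivatives. A careful expansion of the Laplacian in local coordinates, together with the first-order correction to the outward unit normal, then shows
\[
\bigl|\Delta_{g_\varepsilon}\phi_\delta - \Delta_{\hat g_\varepsilon}\phi_\delta\bigr| \le C_1|x|\, u_\varepsilon^{-\frac{4}{m-2}}\phi_\delta, \qquad \bigl|\partial_\nu\phi_\delta - u_\varepsilon^{-\frac{2}{m-2}}\phi_\delta\bigr| \le C_2|x|\, u_\varepsilon^{-\frac{2}{m-2}}\phi_\delta,
\]
where $C_1,C_2$ depend only on the ambient geometry. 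On $T^\varepsilon(\alpha_1,\alpha_2)$ one has the $\varepsilon$-independent bound $|x| \le e^{-\min(\alpha_1,\alpha_2)}$, so choosing $\alpha_1,\alpha_2$ sufficiently large (in terms of $c_0,C_1,C_2$) forces these errors to be at most $\tfrac12 c_0\, u_\varepsilon^{-4/(m-2)}\phi_\delta$ and $\tfrac12 u_\varepsilon^{-2/(m-2)}\phi_\delta$ respectively. Combining with the model estimates yields the stated inequalities with $C := c_0/2$, and the requirement $\varepsilon \in (0, e^{-\max(\alpha_1,\alpha_2)})$ simply ensures that $T^\varepsilon(\alpha_1,\alpha_2)$ is non-empty.

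The main technical obstacle is the careful accounting of perturbation contributions to the Christoffel symbols and to the unit normal -- in particular from the cross-block entries $g^{(*)}_{it}, g^{(*)}_{i\lambda}, g^{(*)}_{t\lambda}$ and from the $|x|$-dependence of the angular-block entries -- making sure each such contribution carries precisely the factor $u_\varepsilon^{-4/(m-2)}$ (respectively $u_\varepsilon^{-2/(m-2)}$) needed for absorption into the leading negative term, rather than a larger inverse power of $u_\varepsilon$. This structural fact is the reason one can choose $\alpha_1,\alpha_2,L,C$ depending only on $\delta$ and the ambient geometry, uniformly in $\varepsilon$.
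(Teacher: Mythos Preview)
Your overall strategy---compute on the product model $\hat g_\varepsilon$ and then absorb the $O(|x|)$ perturbation terms by taking $\alpha_1,\alpha_2$ large---is exactly the paper's approach, and your key identity
\[
u_\varepsilon^{\frac{4}{m-2}}\Delta_{\hat g_\varepsilon}\phi_\delta \;=\; \phi_\delta\!\left(\frac{f''}{f} - \frac{u_\varepsilon''}{u_\varepsilon} + \frac{\Delta_\theta\beta}{\beta}\right)
\]
is correct and equivalent to the paper's conjugation formula $\Delta_{g_\varepsilon}\cdot = u_\varepsilon^{-(m+2)/(m-2)}\mathcal{D}_\varepsilon(u_\varepsilon\,\cdot\,)$. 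The perturbation bookkeeping you outline is also right.

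There is, however, a sign slip that governs how $L$ must be chosen. With the Laplacian convention used in the paper one has $\Delta_\theta\cos\theta^1=-(m-1)\cos\theta^1$ on $S^{m-1}$, so $\Delta_\theta\beta = (m-1)L\cos\theta^1 \ge 0$ on $S^{m-1}_+$; the term $\Delta_\theta\beta/\beta$ is \emph{non-negative}, not $\le 0$, and cannot simply be discarded. The paper deals with this by bounding $\Delta_\theta\beta/\beta \le (m-1)L$ and then choosing $L$ small---specifically $L=\bigl(\bigl(\tfrac{m-2}{2}\bigr)^2-\delta^2\bigr)/m$---so that $\delta^2-\bigl(\tfrac{m-2}{2}\bigr)^2+(m-1)L$ is still strictly negative. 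Relatedly, no positive $L$ makes $\partial_{\theta^1}\beta=\beta$ hold exactly on $\partial S^{m-1}_+$: at $\theta^1=\pi/2$ one has $\partial_{\theta^1}\beta=L$ while $\beta=L+1$, so the model boundary identity is $\partial_\nu\phi_\delta=\tfrac{L}{L+1}\,u_\varepsilon^{-2/(m-2)}\phi_\delta$. That is still a positive $\varepsilon$-independent multiple, which is all the Hopf-lemma step in Proposition~$2_b$ actually needs; the role of $L$ is to balance the two inequalities rather than to validate either one for free.
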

\begin{proof}
Our first step is to obtain a 
useful local expression for the $g_\varepsilon$-Laplacian.
We will only need to consider the portion of the neck $T^\varepsilon(1,1)$ where the cut off function
 $\eta$ is constant and
the components of $g_\varepsilon$ take the form
\[
\begin{array}{ll}
g^\varepsilon_{ij}=g^K_{ij}+\mathcal{O}(|x|),& g^\varepsilon_{it}=\mathcal{O}(|x|^2)\notag\\
g^\varepsilon_{i\lambda}=\mathcal{O}(|x|^2),&
	g^\varepsilon_{tt}=u_{\varepsilon}^{\frac{4}{m-2}}(1+\mathcal{O}(|x|))\notag\\
g^\varepsilon_{t\lambda}=u^{\frac{4}{m-2}}_\varepsilon\mathcal{O}(|x|),&
	g^\varepsilon_{\lambda\mu}=u_\varepsilon^{\frac{4}{m-2}}(g^{(\theta)}_{\lambda\mu}+
	\mathcal{O}(|x|))\notag
\end{array}
\]
where $g^{(\theta)}_{\lambda\mu}$ denotes a component of the 
standard round metric on
the upper unit hemi-sphere $S^{m-1}_+$ in spherical coordinates 
$\theta=(\theta^1,\dots,\theta^{m-1})$.
As for the volume form, we have
\[
\sqrt{g_\varepsilon}=\sqrt{g_K}\sqrt{g_\theta}u_{\varepsilon}^\frac{2m}{m-2}(1+\mathcal{O}(|x|))
\]
where we write $\sqrt{g_\theta}=\sqrt{\det\left(g_{\lambda\mu}^{(\theta)}\right)}$. 
One can use the above expressions with Cramer's rule to compute the following 
expansions for components of the inverse matrix 
$g_\varepsilon^{-1}$
\[
\begin{array}{ll}
g_\varepsilon^{ij}=g_K^{ij}+\mathcal{O}(|x|),& g_\varepsilon^{it}=\mathcal{O}(|x|^2)\notag\\
g^{i\lambda}_\varepsilon=\mathcal{O}(|x|^2),&
	g_\varepsilon^{tt}=u_\varepsilon^{\frac{-4}{m-2}}(1+\mathcal{O}(|x|))\notag\\
g_\varepsilon^{t\lambda}=u_\varepsilon^{\frac{-4}{m-2}}\mathcal{O}(|x|),&
	g_\varepsilon^{\lambda\mu}=u_\varepsilon^{\frac{-4}{m-2}}
	(g^{\lambda\mu}_{(\theta)}+\mathcal{O}(|x|)).\notag
\end{array}
\]

Recall the following general fact: for a local coordinate system $y=(y^1,\dots,y^n)$ 
of a Riemannian manifold $(N,g)$, 
the $g$-Laplacian can be expressed as 
$\Delta_g\cdot=\frac{1}{\sqrt{g}}\partial_{y^a}(\sqrt{g}\;g^{ab}\partial_{y^b}\cdot)$.
Using this, a straight-forward computation gives us the following expression
\[
\Delta_{g_\varepsilon}=u_\varepsilon^{\frac{-4}{m-2}}\left(\partial_t^2+(m-2)\tanh\left(\frac{m-2}{2}t\right)\partial_t
+\Delta_{\theta}+u^{\frac{4}{m-2}}_\varepsilon\Delta_K+\mathcal{O}(|x|)\Phi_1\right)
\]
where $\Delta_{\theta}$ is the Laplace operator of the standard round metric on $S^{m-1}$, 
$\Delta_K$ is the Laplace operator of $(K,g_K)$, and $\Phi_1$ is a linear second-order 
operator with $\varepsilon$-uniformly bounded coefficients.
Now notice that one can conjugate $\Delta_{g_\varepsilon}$ by 
$u_\varepsilon$ to find
\begin{equation}\label{eq:laplace}
\Delta_{g_\varepsilon}\cdot=u_{\varepsilon}^{-\frac{m+2}{m-2}}\mathcal{D}_\varepsilon(u_\varepsilon \cdot)
\end{equation}
where $\mathcal{D}_\varepsilon$ is an operator of the form
\[
\mathcal{D}_\varepsilon=\partial_t^2-\left(\frac{m-2}{2}\right)^2+\Delta_{\theta}
+u_\varepsilon^{\frac{4}{m-2}}\Delta_K+\mathcal{O}(|x|)\Phi_2.
\]
In the above, $\Phi_2$ is another linear second order operator 
with $\varepsilon$-uniformly bounded coefficients.

Let us first consider the case $\delta\in(\frac{2-m}{2},0)$. One can use the 
conjugation formula (\ref{eq:laplace}) to find
\begin{align}
\Delta_{g_\varepsilon}\phi_\delta&=u_\varepsilon^{-\frac{m+2}{m-2}}\mathcal{D}_\varepsilon(\cosh^\delta(t)\beta(\theta))\notag\\
{}&=u_\varepsilon^{\frac{-4}{m-2}}\phi_\delta\left(\delta^2-\left(\frac{m-2}{2}\right)^2+\frac{(m-1)L\cos(\theta^1)}{\beta(\theta)}+\mathcal{O}(|x|)+(\delta-\delta^2)\cosh^{-2}(t)\right)\notag.
\end{align}
Evidently, we have $\delta-\delta^2\leq0$.  If we choose the positive constant
$L:=\frac{\left(\frac{m-2}{2}\right)^2-\delta^2}{m}$,
then the inequality
\begin{align}
\delta^2-\left(\frac{m-2}{2}\right)^2+\frac{(m-1)L\cos(\theta^1)}{\beta(\theta)}&\leq
\delta^2-\left(\frac{m-2}{2}\right)^2+(m-1)L\notag\\
{}&<0\notag
\end{align}
for all $\theta$.  Now, in order to deal with the above $\mathcal{O}(|x|)$ term in the expression 
for $\Delta_{g_\varepsilon}\phi^\partial_\delta$, observe that
we can find $\alpha_1,\alpha_2$ such that
\[
\delta^2-\left(\frac{m-2}{2}\right)^2+\frac{(m-1)L\cos(\theta^1)}{\beta(\theta)}+\mathcal{O}(|X|)\leq\frac12\left(\delta^2-
\left(\frac{m-2}{2}\right)^2+(m-1)L\right)
\]
on $T^\varepsilon(\alpha_1,\alpha_2)$ for all 
$\varepsilon\in(0,e^{-\max(\alpha_1,\alpha_2)})$. Now setting $C:=\frac12\left(\delta^2-
\left(\frac{m-2}{2}\right)^2+(m-1)L\right)$,
\[
\Delta_{g_\varepsilon}\phi_\delta\leq-Cu_\varepsilon^\frac{-4}{m-2}\phi_\delta
\]
on $T^\varepsilon(\alpha_1,\alpha_2)$. As similar argument for $\delta\in(0,\frac{m-2}{2})$ 
yields the desired estimate for $\Delta_{g_\varepsilon}\phi_\delta$.

Next, we consider the outward normal derivative of $\phi_\delta$. Recall the following 
general fact: if $\{\partial_{y^1},\dots,\partial_{y^{n-1}}\}$ span the boundary tangent 
space of a Riemannian manifold 
$(N,g)$ and $\partial_{y^n}$ points outwards, then the outward normal unit vector to $\partial N$
with respect to $g$ is given by the formula $\frac{g^{na}\partial_{y^a}}{\sqrt{g^{nn}}}$.
In our present situation, observe that 
$\{\partial_{z^1},\dots,\partial_{z^k},\partial_t,\partial_{\theta^1},\dots,\partial_{\theta^{m-2}}\}$
span the tangent space of $\partial M\cap T^\varepsilon(1,1)$ and $\partial_{\theta^1}$ points
outwards. Using this formula with the expressions for components of $g_\varepsilon^{-1}$, 
observe that the outward normal derivative on 
$\partial M\cap T^\varepsilon(1,1)$ with respect to $g_\varepsilon$ can be written as
\[
\partial_\nu=
	u_\varepsilon^{\frac{2}{m-2}}(u_\varepsilon^{-\frac{4}{m-2}}\partial_{\theta^1}+\mathcal{O}(|X|)\Phi_3)
\]
where $\Phi_3$ is a linear first-order differential operator on $\partial M\cap T^\varepsilon(1,1)$ 
with $\varepsilon$-uniformly bounded coefficients. 
Applying this to the barrier function $\phi_\delta$, we have
\[
\partial_\nu\phi_\delta=\phi_\delta u_\varepsilon^{-\frac{2}{m-2}}(1+\mathcal{O}(|x|)).
\]
By choosing yet larger $\alpha_1,\alpha_2$, we may assume that the above term 
satisfies $1+\mathcal{O}(|x|)\geq\frac12$.
we may assume
\[
\partial_\nu\phi_\delta\geq \frac12u_\varepsilon^{-\frac{2}{m-2}}\phi_\delta
\]
on $\partial M\cap T^\varepsilon(\alpha_1,\alpha_2)$ for all 
$\varepsilon\in(0,e^{-\max(\alpha_1,\alpha_2)})$, as claimed.
\end{proof}

\subsubsection{The local a priori estimate}
In order to state the a priori estimate, we will  
decompose the boundary of the region $T^\varepsilon(\alpha_1,\alpha_2)$ into two portions
$\partial T^\varepsilon(\alpha_1,\alpha_2)=\partial_1T^\varepsilon(\alpha_1,\alpha_2)\cup\partial_2 T^\varepsilon(\alpha_1,\alpha_2)$
where 
\begin{align}
\partial_1T^\varepsilon(\alpha_1,\alpha_2)&
	=\{(z,t,\theta)\in T^\varepsilon(\alpha_1,\alpha_2)\colon t=\log\varepsilon+\alpha_1
	\text{ or } t=-\log\varepsilon-\alpha_2\}\notag\\
\partial_2T^\varepsilon(\alpha_1,\alpha_2)&
	=\{(z,t,\theta)\in T^\varepsilon(\alpha_1,\alpha_2)\colon \theta^{1}=\frac{\pi}{2}\}.\notag
\end{align}
Note that $\partial_1T^\varepsilon(\alpha_1,\alpha_2)\subset M$, 
$\partial_2T^\varepsilon(\alpha_1,\alpha_2)\subset\partial M$, and the two meet at a corner.
\begin{prop2B}
Given $\gamma\in(0,m-2)$ there are $\varepsilon$-uniform constants 
$\alpha_1,\alpha_2>1$ and $C>0$ satisfying the following statement
for all $\varepsilon\in(0,e^{-\max\{\alpha_1,\alpha_2\}})$. 
If $v,f\in \mathcal{C}^0(T^\varepsilon(\alpha_1,\alpha_2))$ satisfy $\Delta_{g_\varepsilon}v=f$, then
\[
v\leq 
C\psi_\varepsilon^{-\gamma}\left(\sup_{T^\varepsilon(\alpha_1,\alpha_2)}|\psi^{\gamma+2}_\varepsilon f|+
\sup_{\partial_1 T^\varepsilon(\alpha_1,\alpha_2)}|\psi^\gamma_\varepsilon v|+
\sup_{\partial_2 T^\varepsilon(\alpha_1,\alpha_2)}|\psi^{\gamma+1}_\varepsilon\partial_\nu v|\right)
\]
pointwise on $T^\varepsilon(\alpha_1,\alpha_2)$ and 
\[
||v||_{\mathcal{C}^0_\gamma(T^\varepsilon(\alpha_1,\alpha_2))}\leq 
C\left(||f||_{\mathcal{C}^0_{\gamma+2}(T^\varepsilon(\alpha_1,\alpha_2))}+
||v||_{\mathcal{C}^0_\gamma(\partial_1 T^\varepsilon(\alpha_1,\alpha_2))}+
||\partial_\nu v||_{\mathcal{C}^0_{\gamma+1}(\partial_2 T^\varepsilon(\alpha_1,\alpha_2))}\right).
\]
\end{prop2B}
\begin{proof}
Set $\delta=\gamma-\frac{m-2}{2}$ and let $C',\alpha_1,\alpha_2$ 
be the constants given by Lemma 1.
Now consider the function
\[
\tilde{v}=a\phi_\delta-v
\]
where the constant $a>0$ is given by
\[
\begin{array}{rr}
a:=&\max(2,C'^{-1})\Big{(}\sup_{T^\varepsilon(\alpha_1,\alpha_2)}|u_\varepsilon^{\frac{4}{m-2}}\phi_\delta^{-1}f|+
\sup_{\partial_1T^\varepsilon(\alpha_1,\alpha_2)}|\phi_\delta^{-1}v|\\
{}&\quad+\sup_{\partial_2 T^\varepsilon(\alpha_1,\alpha_2)}|u_\varepsilon^{\frac{2}{m-2}}\phi_\delta^{-1}\partial_\nu v|\Big{)}.
\end{array}
\]
Our goal is to show that $\tilde{v}\geq0$.
First note that $\tilde{v}$ is superharmonic -- applying the inequalities of Lemma 1, we have
\begin{align}
\Delta_{g_\varepsilon}\tilde{v}&\leq-aC'u_\varepsilon^{\frac{-4}{m-2}}\phi_\delta-f\notag\\
{}&\leq -u_\varepsilon^{\frac{-4}{m-2}}\phi_\delta\sup_{T^\varepsilon_\alpha}
	|u_\varepsilon^{\frac{4}{m-2}}\phi_\delta^{-1}f|u-f\notag\\
{}&\leq0\notag.
\end{align}
Also observe that $\tilde{v}\geq0$ on $\partial_1T^\varepsilon(\alpha_1,\alpha_2)$. So
far, we have found 
\[
\begin{array}{rll}
\Delta_{g_\varepsilon}\tilde{v}&\leq0&\text{ in }T^\varepsilon(\alpha_1,\alpha_2)\\
\tilde{v}&\geq0&\text{ on } \partial_1 T^\varepsilon(\alpha_1,\alpha_2).
\end{array}
\]

The maximum principle for $\Delta_{g_\varepsilon}$ 
tells us the minimum of $\tilde{v}$ occurs somewhere on the boundary
of $T^\varepsilon(\alpha_1,\alpha_2)$.  
Suppose the minimum of $\tilde{v}$ occurs at a point 
$y_0\in \partial_2T^\varepsilon(\alpha_1,\alpha_2)$.
We may then apply the Hopf lemma and the estimate on $\partial_\nu\phi_\delta$ from Lemma 1 to obtain a contradiction
\begin{align}
0&> \partial_\nu\tilde{v}(y_0)\notag\\
{}&\geq aC'\phi_\delta u_\varepsilon^{\frac{-2}{m-2}}-\partial_\nu v(y_0)\notag\\
{}&\geq0\notag.
\end{align}
We conclude that the minimum of $\tilde{v}$ must occur on $\partial_1T^\varepsilon(\alpha_1,\alpha_2)$.
Since $\tilde{v}$ is non-negative there, $\tilde{v}\geq0$ on all of $T^\varepsilon(\alpha_1,\alpha_2)$.
In other words,
\begin{align}\label{eq:pest}
v\leq \max(2,C'^{-1})\phi_\delta\Big{(}\sup_{T^\varepsilon(\alpha_1,\alpha_2)}|u_\varepsilon^{\frac{4}{m-2}}\phi_\delta^{-1}f|&+
\sup_{\partial_1T^\varepsilon(\alpha_1,\alpha_2)}|\phi_\delta^{-1}v|\notag\\
{}&+\sup_{\partial_2 T^\varepsilon(\alpha_1,\alpha_2)}|u_\varepsilon^{\frac{2}{m-2}}\phi_\delta^{-1}\partial_\nu v|\Big{)}
\end{align}
on $T^\varepsilon(\alpha_1,\alpha_2)$. 

One can repeat the above argument, replacing $\tilde{v}$ with $a\phi_\delta+v$, 
to arrive at a similar lower bound on $v$. Together, we arrive at 
\begin{align}\label{eq:best}
\sup_{T^\varepsilon(\alpha_1,\alpha_2)}|\phi_\delta^{-1}v|\leq C'\Big{(}\sup_{T^\varepsilon(\alpha_1,\alpha_2)}|u_\varepsilon^{\frac{4}{m-2}}\phi_\delta^{-1}f|&+
\sup_{\partial_1T^\varepsilon(\alpha_1,\alpha_2)}|\phi_\delta^{-1}v|\notag\\
{}&+\sup_{\partial_2T^\varepsilon(\alpha_1,\alpha_2)}|u_\varepsilon^{\frac{2}{m-2}}\phi_\delta^{-1}\partial_\nu v|\Big{)},
\end{align}
noting that the constant $\max(2,C'^{-1})$ is independent of $\varepsilon$.

To phrase our estimate in terms of the weighted Banach spaces $\mathcal{C}^0_\gamma$,
we need to compare the functions $u_\varepsilon$ and $\phi_\delta$ to the 
weighting functions $\psi_\varepsilon$.
Recall the following basic fact of the hyperbolic cosine function: For every $\lambda>0$, 
there is a positive constant $C_\lambda$ so that
\[
C_\lambda^{-1}\cosh^\lambda(s)\leq\cosh(\lambda s)\leq C_\lambda\cosh^\lambda(s)
\]
holds for all $t\in\mathbb{R}$. For instance, 
recalling that $\psi_\varepsilon=\varepsilon\cosh(t)$ on 
$T^\varepsilon(\alpha_1,\alpha_2)$, 
there is a constant $C_\delta$ depending only on $\delta$ such that
\[
C_\delta^{-1}\psi_\varepsilon^{\frac{m-2}{2}-\delta}\leq\varepsilon^\delta\phi_\delta^{-1}
\leq C_\delta\psi_\varepsilon^{\frac{m-2}{2}-\delta}.
\]
Recalling that $\gamma=\frac{m-2}{2}-\delta$, one may replace $\phi_\delta^\partial$ 
and $u_\varepsilon$ with appropriate powers of $\psi_\varepsilon$ to 
reorganize the estimates (\ref{eq:pest}) and (\ref{eq:best}) to the one claimed in 
Lemma $2$ where 
$C=\max(2,C'^{-1},C_\delta)$.
\end{proof}
\subsection{Relative embeddings}
We will now consider the relative embedding case. Now $K$ itself 
has non-empty boundary $\partial K$.
Let $U\to\partial K$ be a coordinate chart for the boundary of $K$ with coordinates 
$z'=(z^1,\dots,z^{k-1})$ and, letting $z^k\in[0,1]$ be the inward normal direction, form
Fermi coordinates $z=(z',z^k)$ on a neighborhood of $U$ in $K$.
We will split the chart $U\times[0,3]$ into three parts 
\[
U^-:=U\times[0,1], \quad U^T:=U\times[1,2], \quad U^+:=U\times[2,3].
\]
\begin{figure}[htb!]
\begin{center}
\begin{picture}(0,0)
\put(5,5){$\partial M_*$}
\put(40,180){$M_*$}
\put(280,20){$\iota_*K$}
\put(50,110){\small{$\mathrm{Im}(F_*^-)$}}
\put(110,110){\small{$\mathrm{Im}(F_*^T)$}}
\put(170,110){\small{$\mathrm{Im}(F^+_*)$}}
\put(5,80){\tiny{$F^\partial_*(z',x)$}}
\put(155,80){\tiny{$F^+_*(z',2,x)$}}
\put(95,82){\small{$2V(z',x)$}}
\put(85,55){\small{$-2\nu(F^\partial_*(z',x))$}}
\end{picture}
\includegraphics[height=3in]{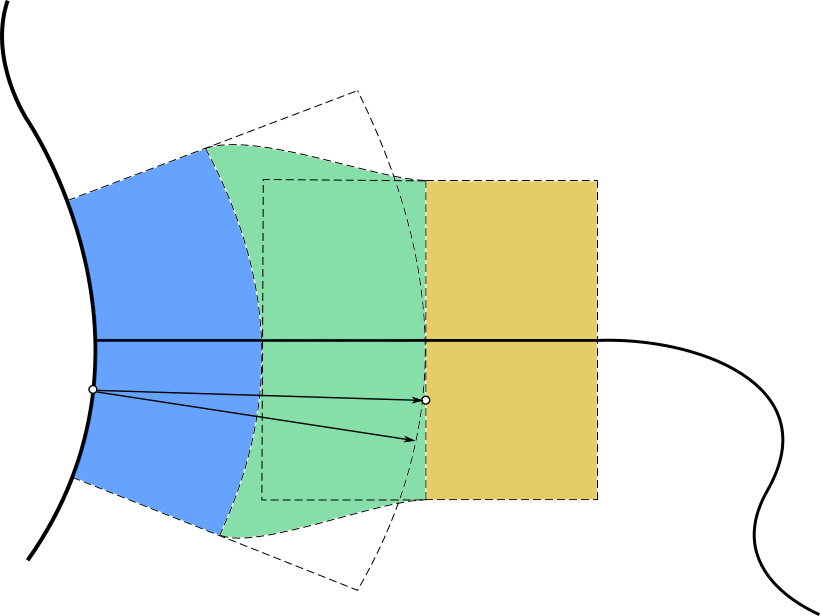}
\end{center}
\caption{The coordinate charts $F_*^-,F_*^T,F_*^+$ and the vector field $V$ compared to 
	the vector field $-\nu$}
\label{fig:relativesurgery}
\end{figure}
On $U^+$, we give Fermi coordinates given by
\[
F_*^+:U^+\times D^m\to M_*,\quad (z,x)\mapsto \exp^{g_*}_{\iota_*(z)}(x)
\]
which we originally saw in the interior embedding case from section 2.1.
As for $U^-$, we first have boundary Fermi coordinates $(z',x)$ for $\partial M_*$ given by
\[
F_*^\partial:U\times \{0\}\times D^{m}\to M_*,\quad (z',x)\mapsto \exp^{g_*|_{\partial M_*}}_{\iota_*(z')}(x).
\]
Now, similar to the boundary embedding construction from section 2.2, 
we get coordinates on $M_*$ by the mapping
\[
F_*^-:U^-\times D^m\to M_*,\quad (z',z^k,x)\mapsto \exp^{g_*}_{F_*^\partial(z',x)}(-z^k\nu),
\]
where $\nu$ is the outward-pointing normal vector to $\partial M_*$ with respect to $g_*$.
In order to transition between the two coordinate systems $F_*^-$ and $F_*^+$, we first define a vector
$V(z',x)\in T_{F^\partial_*(z',x)}M_*$ by solving the equation
\[
\exp^{g_*}_{F^\partial_*(z',x)}(2V(z',x))=F^+_*(z',2,x).
\]
Now we fix a non-increasing cutoff function $\alpha:[0,3]\to[0,3]$ 
which takes the value 1 on $[0,1]$ and 0 on $[2,3]$ and form a transitioning normal
vector by
\[
\overline{\nu}(z',z^k,x):=-\nu(F^\partial_*(z',x))\alpha(z^k)+(1-\alpha(z^k))V(z',x).
\]
The coordinate system on $U^T$ is given by the mapping
\[
F_*^T:U^T\times B^m\to M_*,\quad (z',z^k,x)\mapsto \exp^{g_*}_{F^\partial_*(z',x)}(z^k\overline{\nu}(z',z^k,x)).
\]
Noting that $F^+_*=F^T_*$ when $z^k=2$, $F^-_*=F^T_*$ when $z^k=1$, and $z=(z',z^k)$, we have
well-defined coordinates $(z,x)$ on a neighborhood of the boundary of $\iota_*(K)$ in $M_*$
(see Figure \ref{fig:relativesurgery}).
As for an interior neighborhood of $\iota_*(K)$, we have the Fermi coordinates from section 2.1
and refer to both coordinate systems with $(z,x)$.

On either interior or boundary charts, we introduce the coordinates $(z,t,\theta)$ by 
setting $x=\varepsilon e^{-t}\theta$ on $M_1$ 
and $x=\varepsilon e^t\theta$ on $M_2$.
Here $\theta=(\theta^1,\dots,\theta^{m-1})$ are spherical coordinates on 
the unit sphere $S^{m-1}$ and $t\in(\log\varepsilon,-\log\varepsilon)$. 
The metric $g_*$ can be expressed in the form
\[
\begin{array}{rr}
g_*=&g^{(*)}_{ij}dz^i dz^j+\left(u_\varepsilon^{(*)}\right)^{\frac{4}{m-2}}\Big{(}g^{(*)}_{tt}dt^2+
	g_{\lambda\mu}^{(*)}d\theta^\lambda d\theta^\mu+g_{t\lambda}^{(*)}dtd\theta^\lambda\Big{)}\\
{}&+g_{it}^{(*)}dz^i dt+g_{i\lambda}^{(*)}dz^i d\theta^\lambda
\end{array}
\]
where $u_\varepsilon^{(*)}$ is defined as in section 2.1.  The asymptotics now take the form
\[
\begin{array}{lll}
g^{(*)}_{ij}(z,t,\theta)=g^K_{ij}(z)+\mathcal{O}(|x|), 
	& g^{(*)}_{\lambda\mu}(z,t,\theta)=g^{(\theta)}_{\lambda\mu}(\theta)+\mathcal{O}(|x|),
	& g_{tt}^{(*)}(z,t,\theta)=1+\mathcal{O}(|x|)\\
g^{(*)}_{i\lambda}(z,t,\theta)=\mathcal{O}(|x|), 
	& g^{(*)}_{it}(z,t,\theta)=\mathcal{O}(|x|),
	& g^{(*)}_{i\lambda}(z,t,\theta)=\mathcal{O}(|x|)
\end{array}
\]
where $g^{(\theta)}_{\lambda\mu}$ denotes a component of the standard round metric
on $S^{m-1}$ in the spherical coordinates $(\theta^1,\dots,\theta^{m-1})$.

Using the same cutoff functions $\xi$ 
and $\eta$ we introduced in the case of interior embeddings, define the function
$u_\varepsilon$ as in section 2.1. For each $\varepsilon\in(0,\frac12)$, set
\[
\begin{array}{rr}
g_\varepsilon(z,t,\theta)=&(\xi g_{ij}^{(1)}+(1-\xi)g_{ij}^{(2)})dz^i dz^j+
	u_\varepsilon^{\frac{4}{n-2}}\Big{(} (\xi g^{(1)}_{tt} +(1-\xi)g^{(2)}_{tt})dt^2\\
{}&+(\xi g_{\lambda\mu}^{(1)}+
	(1-\xi)g_{\lambda\mu}^{(2)})d\theta^\lambda d\theta^\mu+ (\xi g_{t\lambda}^{(1)}+
	(1-\xi)g_{t\lambda}^{(2)})dtd\theta^\lambda]\Big{)}\\
{}&+(\xi g_{it}^{(1)}+(1-\xi)g_{it}^{(2)})dz^idt+(\xi g_{i\lambda}^{(1)}+
	(1-\xi)g_{i\lambda}^{(2)})dz^i d\theta^\lambda.
\end{array}
\]
This defines a metric $g_\varepsilon$ on the tubular annuli 
$V_*^1\setminus\overline{V_*^{\varepsilon^2}}$ for $*=1,2$. 
We set $g_\varepsilon=g_*$ on $M_*\setminus \overline{V_*^1}$.
This gives well-defined metric $g_\varepsilon$ on the disjoint union
$(M_1\setminus V_*^{\varepsilon^2}) \sqcup(M_2\setminus V_*^{\varepsilon^2})$.

Let $\Phi:\mathcal{N}_1(K)\to\mathcal{N}_2(K)$ be the isomorphism of the
normal bundles given in the hypothesis of Theorem $1_c$. 
For each $\varepsilon\in(0,\frac12)$, consider mapping 
$\Psi_\varepsilon$ given by
\begin{align}
\Psi_\varepsilon&:\left(\mathcal{N}_1(K)\setminus\{0\}\right)\sqcup\left(\mathcal{N}_2(K)\setminus\{0\}\right)\to
\left(\mathcal{N}_1(K)\setminus\{0\}\right)\sqcup\left(\mathcal{N}_2(K)\setminus\{0\}\right)\notag\\
\Psi_\varepsilon&(z,t,\theta):=
\begin{cases}
\Phi(z,-t,\theta)&\text{ if }(z,t,\theta)\in\mathcal{N}_1(K)\\
\Phi^{-1}(z,-t,\theta)&\text{ if }(z,t,\theta)\in\mathcal{N}_2(K).
\end{cases}\notag
\end{align}
For each $\varepsilon\in(0,\frac12)$, we construct the generalized connected sum
\[
M=\left( (M_1\setminus V_1^{\varepsilon^2}){\sqcup} 
	(M_2\setminus V_2^{\varepsilon^2})\right)/\sim_\varepsilon
\]
where we introduce a relation $\sim_\varepsilon$ on the annuli
$(V_1^1\setminus V_1^{\varepsilon^2})\sqcup(V_2^1\setminus V_2^{\varepsilon^2})$: 
If $y\in V_1^1\setminus \overline{V_1^{\varepsilon^2}}$, then
$y\sim_\varepsilon (F_2\circ\Psi_\varepsilon\circ F_1^{-1})(y)$. 
Observing that $g_\varepsilon$ is invariant under $\Psi_\varepsilon$, the 
metric descends to $M$. This finishes 
the definition of the family of Riemannian manifolds $(M,g_\varepsilon)$.

Recalling that we assume the mean curvature $H_{g_K}$ vanishes on $\partial K$, the proof of
the following proposition is very similar to argument in Proposition $1_b$ and so we omit it.
\begin{prop1C}
There is a constant $C>0$, independent of $\varepsilon$, such that
\[
|R_{g_\varepsilon}|\leq C\varepsilon^{-1}\cosh^{1-m}(t), \quad |H_{g_\varepsilon}|\leq C\cosh^{2-m}(t)
\]
on $T^\varepsilon(0,0)$ and
\[
\int_M|R_{g_\varepsilon}|d\mu_{g_\varepsilon}=\mathcal{O}(\varepsilon^{m-2}),\quad 
	\int_{\partial M}|H_{g_\varepsilon}| d\sigma_{g_\varepsilon}=\mathcal{O}(\varepsilon^{m-2}).
\]
\end{prop1C}
As for the local a priori estimate, we will need to again decompose the boundary of 
$\partial T^\varepsilon(\alpha_1,\alpha_2)$ into two pieces
\begin{align}
\partial_1 T^\varepsilon(\alpha_1,\alpha_2)=&
	\{(z,t,\theta)\in T^\varepsilon(\alpha_1,\alpha_2)\colon t=\log\varepsilon+\alpha_1
	\text{ or }t=-\log\varepsilon-\alpha_2\}\notag\\
\partial_2 T^\varepsilon(\alpha_1,\alpha_2)=&
	\{(z,t,\theta)\in T^\varepsilon(\alpha_1,\alpha_2)\colon z\in\partial K\}\notag.
\end{align}
We will use the same notation for $\partial_1T^\varepsilon(\alpha_1,\alpha_2)$
and $\partial_2T^\varepsilon(\alpha_1,\alpha_2)$ as we did in the case of boundary embeddings.
There is also an analogue of the estimates in Propositions $2_a$ and $2_b$ for the present case of relative 
embeddings. Its proof is very similar to that of Proposition $2_b$ and we leave it to the reader.
\begin{prop2C}
Given $\gamma\in(0,m-2)$ there are $\varepsilon$-uniform constants 
$\alpha_1,\alpha_2>1$ and $C>0$ satisfying the following statement
for all $\varepsilon\in(0,e^{-\max\{\alpha_1,\alpha_2\}})$. 
If $v,f\in \mathcal{C}^0(T^\varepsilon(\alpha_1,\alpha_2))$ satisfy $\Delta_{g_\varepsilon}v=f$, then
\[
v\leq 
C\psi_\varepsilon^{-\gamma}\left(\sup_{T^\varepsilon(\alpha_1,\alpha_2)}|\psi^{\gamma+2}_\varepsilon f|+
\sup_{\partial_1 T^\varepsilon(\alpha_1,\alpha_2)}|\psi^\gamma_\varepsilon v|+
\sup_{\partial_2 T^\varepsilon(\alpha_1,\alpha_2)}|\psi^{\gamma+1}_\varepsilon\partial_\nu v|\right)
\]
pointwise on $T^\varepsilon(\alpha_1,\alpha_2)$ and 
\[
||v||_{\mathcal{C}^0_\gamma(T^\varepsilon(\alpha_1,\alpha_2))}\leq 
C\left(||f||_{\mathcal{C}^0_{\gamma+2}(T^\varepsilon(\alpha_1,\alpha_2))}+
||v||_{\mathcal{C}^0_\gamma(\partial_1 T^\varepsilon(\alpha_1,\alpha_2))}+
||\partial_\nu v||_{\mathcal{C}^0_{\gamma+1}(\partial_2 T^\varepsilon(\alpha_1,\alpha_2))}\right).
\]
\end{prop2C}

\section{The linear analysis}
Now that we have constructed the generalized connected sum $(M,g_\varepsilon)$,
we will turn our attention to equation (\ref{eq:RYP2}).
At this point, there is no need to consider the interior, boundary, and relative
embedding cases independently as we did in Section 2. Unless otherwise mentioned,
{\bf from now on we will speak of all three cases simultaneously.}  

Our first task will be to study 
the family of linear operators $(\Delta_{g_\varepsilon},\partial_\nu)$ for
$\varepsilon\in(0,\frac12)$. Before we continue, now is a good time to make 
some informal remarks. The first non-zero Steklov eigenvalue of 
$(\Delta_{g_\varepsilon},\partial_\nu)$, 
which we write as $\lambda_{\varepsilon}$, is the smallest number 
such that the following equation admits a non-constant solution $f$
\[
\begin{cases}
\Delta_{g_\varepsilon}f=0&\text{ on } M\\
\partial_\nu f=\lambda_{\varepsilon}f&\text{ on }\partial M.
\end{cases}
\]
In general, $\lambda_{\varepsilon}\to0$ as $\varepsilon\to0$.
For this reason, there is no general result which would provide us a useful 
$\varepsilon$-uniform $\mathcal{C}^0(M)$ 
estimate for our linear problem.

This in mind, we take two measures to combat this degeneracy.
In addition to working in the weighted Banach spaces $\mathcal{C}^0_\gamma(M)$ we
introduced in Section 2, 
we will initially solve (with estimates) a modification 
of the linear problem. Speaking informally, this auxiliary problem is formulated 
by projecting the linear problem along a hand-made model for the first 
non-constant eigenfunction.
This model is a function denoted by $\beta_\varepsilon$ which takes the values
1 on $M_1\setminus V_1^{\varepsilon}$, $-1$ on $M_2\setminus V_2^{\varepsilon}$, and
interpolates between them on the neck so that 
$\int_M\beta_\varepsilon d\mu_{g_\varepsilon}=0$ (see Section 3.1).

Given $\gamma\in(0,m-2)$ and suitable functions $f\in\mathcal{C}^0_{\gamma+2}(M)$,
$\ell\in\mathcal{C}^0_\gamma(\partial M)$,
we will produce a function $u\in\mathcal{C}^0_\gamma(M)$ satisfying
\begin{equation}\label{eq:lPDE}
\begin{cases}
\Delta_{g_\varepsilon}u=f&\text{ on } M\\
\partial_\nu u=\ell-\lambda\beta_\varepsilon&\text{ on }\partial M
\end{cases}
\end{equation}
where $\lambda$ is a real number depending on $f$ and $\ell$.
Notice that the functions $f,\ell$ must satisfy 
\begin{equation}\label{eq:green}
\int_Mfd\mu_{g_\varepsilon}=\int_{\partial M}\ell d\sigma_{g_\varepsilon},
\end{equation} 
which is simply Green's formula applied to $u$. 
We will refer to (\ref{eq:green}) as the orthogonality condition
of equation (\ref{eq:lPDE}).
As we produce this solution, we also obtain an $\varepsilon$-uniform 
$\mathcal{C}^0_\gamma$-norm 
a priori estimate for $u$ using standard elliptic estimates on $(M_*,g_*)$ 
with the local a priori estimate of Propositions $2_a$, $2_b$, and $2_c$.  

Before we begin, it will be useful to state 
a regularity result we will require later in the present section.
The following theorem is a version of elliptic 
$L^p$ estimate, tailored to the Neumann problem.
\begin{theorem}{\rm cf. \cite[Theorem 3.2]{uhl}}
Let $(N,g_N)$ be a compact Riemannian manifold with boundary $\partial N$.
Assume that  $v\in W^{k+2,p}(N,g_N)$ for some $k,p\in\mathbb{N}_0$
satisfies $\int_N v \,d\mu_{g_N}=0$. 
Then there is a constant $C$ depending only on the geometry of $(N,g_N)$, 
$k$, and $p$ such that
\begin{equation}\label{eq:uhl}
||v||_{W^{k+2,p}(N,g_N)}\leq C\left(||\Delta_{g_N}v||_{W^{k,p}(N,g_N)}+||\partial_\nu v||_{W^{k+1,p}_\partial(N,g_N)}\right).
\end{equation}
where the norm $||\cdot||_{W^{k,p}_\partial(N,g_N)}$ is defined by
\[
||F||_{W^{k,p}_\partial(N,g_N)}:=\inf\{||G||_{W^{k,p}(N,g_N)}\colon G\in W^{k,p}(N,g_N), G|_{\partial N}=F\}.
\]
\end{theorem}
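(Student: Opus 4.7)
The plan is to prove (\ref{eq:uhl}) in two stages: first establish the analogous estimate with an extra $\|v\|_{L^p}$ term on the right-hand side by classical elliptic theory, then remove that lower-order term using the mean-zero hypothesis via a compactness argument.

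For the first stage, I would invoke the Agmon--Douglis--Nirenberg $L^p$ theory for elliptic boundary value problems. The Neumann system $(\Delta_{g_N},\partial_\nu)$ satisfies the Lopatinski--Shapiro complementing condition, and so the standard machinery (local Calder\'on--Zygmund estimates in the interior, combined with boundary-straightening charts and half-space estimates for the constant-coefficient model problem, glued by a partition of unity on $N$) yields
\[
\|v\|_{W^{k+2,p}(N,g_N)} \leq C\bigl(\|\Delta_{g_N}v\|_{W^{k,p}(N,g_N)} + \|\partial_\nu v\|_{W^{k+1,p}_\partial(N,g_N)} + \|v\|_{L^p(N,g_N)}\bigr)
\]
for every $v\in W^{k+2,p}(N,g_N)$. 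The norm $\|\cdot\|_{W^{k+1,p}_\partial}$ is equivalent, via the trace theorem, to the fractional Sobolev norm $W^{k+1-1/p,p}(\partial N)$, so this is genuinely an estimate controlling the Neumann trace.

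For the second stage, I would argue by contradiction. If no constant $C$ makes (\ref{eq:uhl}) hold, there is a sequence $\{v_n\}\subset W^{k+2,p}(N,g_N)$ with $\int_N v_n\,d\mu_{g_N}=0$, $\|v_n\|_{W^{k+2,p}}=1$, and $\|\Delta_{g_N}v_n\|_{W^{k,p}}+\|\partial_\nu v_n\|_{W^{k+1,p}_\partial}\to 0$. By the estimate from the first stage, $\|v_n\|_{L^p}$ is bounded below by a positive constant. The compact embedding $W^{k+2,p}(N)\hookrightarrow L^p(N)$ together with weak compactness of the unit ball in $W^{k+2,p}$ yields a subsequence converging strongly in $L^p$ and weakly in $W^{k+2,p}$ to some $v\in W^{k+2,p}(N)$ with $\int_N v\,d\mu_{g_N}=0$ and $\|v\|_{L^p}>0$. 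Since $w\mapsto(\Delta_{g_N}w,\partial_\nu w)$ is bounded linear into $W^{k,p}\times W^{k+1,p}_\partial$, it is weakly continuous, so the limit satisfies $\Delta_{g_N}v=0$ and $\partial_\nu v=0$. Uniqueness for the homogeneous Neumann problem on connected $N$ forces $v$ to be constant, and the zero-mean condition forces $v\equiv 0$, contradicting $\|v\|_{L^p}>0$.

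The heavy lifting is the first stage: one must know (or carefully reproduce) the $L^p$ elliptic theory for boundary value problems, including the verification of the complementing condition for the Neumann system and the correct handling of the boundary norm $\|\cdot\|_{W^{k+1,p}_\partial}$. The second stage is a routine compactness argument once the baseline estimate is available. A minor subtlety worth flagging is that $N$ must be assumed connected (or at least that the mean-zero hypothesis suffices to eliminate the kernel on each component); on a general compact manifold with boundary, the kernel of the Neumann Laplacian consists of locally constant functions, and the mean-zero condition on the whole manifold kills them precisely when $N$ is connected.
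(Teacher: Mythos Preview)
Your argument is correct and follows the standard two-stage route (baseline $L^p$ elliptic estimate with a lower-order term via Agmon--Douglis--Nirenberg, then removal of that term by a compactness/contradiction argument using the mean-zero hypothesis). However, there is nothing to compare against: the paper does not prove this statement but simply quotes it as \cite[Theorem 3.2]{uhl} for later use, so your proposal supplies a proof where the paper gives none. Your remark about connectedness of $N$ is a fair caveat, though in the paper's applications $N$ is always one of the connected manifolds $(M_*,g_*)$.
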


\subsection{The linear problem I}
For each $\alpha_1,\alpha_2>1$, let us fix $\rho_1$ and $\rho_2$, two smooth functions 
on $M_1\sqcup M_2$ satisfying
\[
\rho_1=\begin{cases} 1&\text{ on } M_1\setminus T^\varepsilon(\alpha_1,0)\\
0&\text{ on } M_2\setminus T^\varepsilon(0,-2\log \varepsilon-\alpha_1-1)
\end{cases}
\]
\[
\rho_2=\begin{cases} 1&\text{ on } M_2\setminus T^\varepsilon(0,\alpha_2)\\
0&\text{ on } M_1\setminus T^\varepsilon(-2\log \varepsilon-\alpha_2-1,0)
\end{cases}
\]
and $\partial_\nu\rho_1\equiv0$, and $\partial_\nu\rho_2\equiv0$ on $\partial M_1\sqcup\partial M_2$.
Understanding that $\rho_1$ and $\rho_2$ descend to the connected sum $M$, 
we then define $\beta_\varepsilon:M\to\mathbb{R}$ by 
$\beta_\varepsilon:=\rho_1-\rho_2$.

In the case of interior embeddings, where we have not altered the original 
metrics on the boundary, it is immediate that
\[
\int_{\partial M} \beta_\varepsilon d\sigma_{g_\varepsilon}=0
\]
since we assume $\mathrm{Vol}_{g_1}(\partial M_1)=\mathrm{Vol}_{g_2}(\partial M_2)$.
To arrange for $\beta_\varepsilon$ to have vanishing 
average value on the boundary in 
the case of boundary and relative embeddings
(where $d\sigma_{g_\varepsilon}$ is affected by the gluing), we may have to 
choose $\alpha_1$ and $\alpha_2$ differently. However, notice that this can 
always be achieved by only increasing either $\alpha_1$ or $\alpha_2$.
Since the estimate of Lemma 2 also holds for these larger parameters, 
{\bf from now on we will assume that $\alpha_1$ and $\alpha_2$ have been chosen so that 
Propositions $2_a, 2_b,$ and $2_c$ apply and 
$\int_{\partial M}\beta_\varepsilon d\sigma_{g_\varepsilon}=0$.}

In this section we build an approximate solution to (\ref{eq:lPDE})
which is straight-forward to estimate, but accumulates many error terms in 
a gluing process.  This construction is summarized in the 
following lemma which will subsequently
be applied iteratively to establish a genuine solution to the linear 
problem (\ref{eq:lPDE}), with estimates. 

\begin{lemma}\label{lem:linear1}
Let $\gamma\in(0,m-2)$ and $B\in(0,1)$.  There is an $\varepsilon_0>0$ such that
the following statement is satisfied for all $\varepsilon\in(0,\varepsilon_0)$:
Suppose $f\in \mathcal{C}^0_{\gamma+2}(M)$ and 
$\ell\in\mathcal{C}^0_{\gamma+1}(\partial M)$ satisfy
\[
\int_{M}fd\mu_{g_\varepsilon}=\int_{\partial M}\ell d\sigma_\varepsilon.
\]
Then there is $\lambda\in\mathbb{R}$,
a function $u\in\mathcal{C}^0_{\gamma}(M)$, and an error term 
$E\in\mathcal{C}^0_{\gamma+2}(M)$ satisfying
\[
\begin{cases}
\Delta_{g_\varepsilon}u=f +E&\text{ in } M\\
\partial_\nu u=\ell-\lambda\beta_\varepsilon&\text{ on }\partial M\\
\int_M ud\mu_{g_\varepsilon}=0
\end{cases}
\]
Moreover, $u,\lambda,$ and $E$ satisfy the following estimates
\[
\begin{array}{rcl}
||u||_{\mathcal{C}^0_\gamma(M)}&\leq &C(||f||_{\mathcal{C}^0_{\gamma+2}(M)}+||\ell||_{\mathcal{C}_{\gamma+1}^0(\partial M)})\\
|\lambda|&\leq&C(||f||_{\mathcal{C}^0_{\gamma+2}(M)}+||\ell||_{\mathcal{C}_{\gamma+1}^0(\partial M)})\\
||E||_{\mathcal{C}^0_{\gamma+2}(M)}&\leq&C\varepsilon^{B\gamma}(||f||_{\mathcal{C}^0_{\gamma+2}(M)}+||\ell||_{\mathcal{C}_{\gamma+1}^0(\partial M)})
\end{array}
\]
where the constant $C>0$ is independent of $\varepsilon$ and $B$.
\end{lemma}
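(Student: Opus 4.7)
The plan is to reduce the system (\ref{eq:lPDE}) to two decoupled Neumann problems on the untouched manifolds $(M_*,g_*)$ and then patch their solutions together with cutoffs on $M$, reading off $E$ from the commutator terms that arise in the patching. The constant $\lambda$ will be chosen so that each one-sided problem automatically satisfies its Neumann compatibility condition.

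First I would fix smooth cutoffs $\chi_1,\chi_2\colon M\to[0,1]$ adding up to $1$ on $M$, with $\chi_*$ equal to $1$ on $M_*\setminus V_*^1$ and vanishing on the far side of the neck, and arranged so that $\partial_\nu\chi_*\equiv 0$ on $\partial M$. Set $f_*:=\chi_* f$, $\ell_*:=\chi_*\ell$, extended by zero to $M_*$. Determine $\lambda$ by the requirement
\[
\lambda\,\mathrm{Vol}_{g_1}(\partial M_1)=\int_{\partial M_1}\ell_1\,d\sigma_{g_1}-\int_{M_1} f_1\,d\mu_{g_1},
\]
which is precisely the Neumann compatibility condition for the datum $(f_1,\ell_1-\lambda)$ on $(M_1,g_1)$. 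The hypothesis $\int_M f\,d\mu_{g_\varepsilon}=\int_{\partial M}\ell\,d\sigma_{g_\varepsilon}$, combined with the identities $\mathrm{Vol}_{g_1}(\partial M_1)=\mathrm{Vol}_{g_2}(\partial M_2)$ and $\int_{\partial M}\beta_\varepsilon\,d\sigma_{g_\varepsilon}=0$ arranged in Section 3.1, forces the same $\lambda$ to also satisfy the analogous compatibility condition for $(f_2,\ell_2+\lambda)$ on $(M_2,g_2)$.

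Second, Fredholm theory produces $u_*\in W^{2,p}(M_*,g_*)$ solving $\Delta_{g_*}u_*=f_*$, $\partial_\nu u_*=\ell_*\mp\lambda$, and $\int_{M_*}u_*\,d\mu_{g_*}=0$; Theorem 3.2 followed by Sobolev embedding bounds $\|u_*\|_{\mathcal{C}^2(M_*)}$ by $C(\|f\|_{\mathcal{C}^0_{\gamma+2}(M)}+\|\ell\|_{\mathcal{C}^0_{\gamma+1}(\partial M)})$ uniformly in $\varepsilon$, while the estimate on $|\lambda|$ is immediate from its definition. I would then set $u:=\chi_1u_1+\chi_2u_2-c_\varepsilon$, where the constant $c_\varepsilon$ enforces $\int_M u\,d\mu_{g_\varepsilon}=0$. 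A direct computation gives $\Delta_{g_\varepsilon}u=f+E$ and $\partial_\nu u=\ell-\lambda\beta_\varepsilon$ pointwise, with
\[
E=\sum_{*=1,2}\Big([\Delta_{g_\varepsilon},\chi_*]u_*+\chi_*(\Delta_{g_\varepsilon}-\Delta_{g_*})u_*\Big),
\]
every summand supported in the neck region.

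The principal technical obstacle is the weighted error bound $\|E\|_{\mathcal{C}^0_{\gamma+2}(M)}\leq C\varepsilon^{B\gamma}(\|f\|_{\mathcal{C}^0_{\gamma+2}}+\|\ell\|_{\mathcal{C}^0_{\gamma+1}})$. I would arrange the transition region of each $\chi_*$ along a compact $t$-interval near the center of the neck, where $\psi_\varepsilon\asymp\varepsilon\cosh t\asymp\varepsilon$ and $|x|\asymp\varepsilon$. On those compact regions, standard interior/boundary $\mathcal{C}^2$-regularity on $(M_*,g_*)$ controls $u_*$ and its first derivatives by the data, since $\iota_*(K)$ is a fixed submanifold independent of $\varepsilon$. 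Translating to the $(z,t,\theta)$-coordinates, each commutator term contains only first derivatives of $u_*$ against derivatives of the fixed cutoff; multiplying by the weight $\psi_\varepsilon^{\gamma+2}\asymp\varepsilon^{\gamma+2}$ yields the gain $\varepsilon^\gamma$ after accounting for the $\varepsilon^{-2}$ scaling of radial derivatives in these coordinates, and the slack $B<1$ absorbs the residual logarithmic and constant factors. For the discrepancy terms, the asymptotic expansions of $g_\varepsilon$ recalled in Section 2 show that the coefficients of $\Delta_{g_\varepsilon}-\Delta_{g_*}$ are $\mathcal{O}(\varepsilon^{m-2}|x|^{2-m})$, giving the same or better smallness. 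Finally, the local a priori estimate of Propositions $2_a$, $2_b$, $2_c$ may be used to control $u$ on the neck by its values at the endpoints $\partial T^\varepsilon(\alpha_1,\alpha_2)$, confirming that $|c_\varepsilon|$ is absorbed in the $u$-estimate and completing the argument.
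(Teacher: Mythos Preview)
Your two–piece patching has a genuine gap: the uniform $\mathcal{C}^2$ bound on $u_*$ you invoke does not hold. Because your cutoffs $\chi_*$ transition near the center $t\approx 0$, the datum $f_*=\chi_*f$, extended by zero to $M_*$, is supported down to $|x|\sim\varepsilon$ and satisfies only $|f_*|\lesssim |x|^{-\gamma-2}\|f\|_{\mathcal{C}^0_{\gamma+2}}$ there (this is all that membership in $\mathcal{C}^0_{\gamma+2}$ gives, since $\psi_\varepsilon\asymp|x|$ on each half of the neck). Feeding this into the fixed estimate (\ref{eq:uhl}) on $(M_*,g_*)$ requires $\|f_*\|_{L^n(M_*)}$, which blows up like $\varepsilon^{m/n-\gamma-2}$; hence $\|u_*\|_{\mathcal{C}^0}$ is not uniformly controlled by the weighted data, and neither the bound on $u$ nor the error bound on $E$ follows. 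There is also a secondary mismatch: your patched function has $\partial_\nu u=\ell-\lambda(\chi_1-\chi_2)$, whereas the lemma demands $\ell-\lambda\beta_\varepsilon$ with the \emph{fixed} function $\beta_\varepsilon=\rho_1-\rho_2$ whose transitions sit at distance $|x|\sim e^{-\alpha_*}$, not at the center.

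The paper's proof instead uses a \emph{three}-piece decomposition $\rho_1+\rho_T+\rho_2=1$ with $\rho_1,\rho_2$ supported at fixed distance from $\iota_*(K)$. The mixed Dirichlet--Neumann problem $\Delta_{g_\varepsilon}\tilde u_T=\rho_Tf$ on $T^\varepsilon(\alpha_1,\alpha_2)$ is solved first, and the weighted barrier estimates of Propositions $2_a$--$2_c$ yield $\|\tilde u_T\|_{\mathcal{C}^0_\gamma}$ directly; this is the step that cannot be replaced by elliptic theory on the closed manifolds. The side problems on $(M_*,g_*)$ then carry \emph{uniformly bounded} data $f_*+q_*$. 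To absorb the discrepancy between $d\mu_{g_*}$ and $d\mu_{g_\varepsilon}$ in the Neumann compatibility, a Dirac source $b_*\delta_{\iota_*}$ is added, and the resulting Green-type singularity $\tilde u_*\sim b_*|x|^{2-m}$ is precisely what the weight $\psi_\varepsilon^\gamma$ with $\gamma<m-2$ is designed to tolerate. The final cutoffs $\phi_*$ are placed at $|x|\sim\varepsilon^B$ (not $\varepsilon$), and the $\varepsilon^{B\gamma}$ gain in $\|E\|_{\mathcal{C}^0_{\gamma+2}}$ comes from evaluating $\psi_\varepsilon^{\gamma}$ against this Green behaviour there.
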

\begin{proof}
First we let $\rho_T:=1-\rho_1-\rho_2$ so that
$\{\rho_1,\rho_T,\rho_2\}$ forms a partition of unity on $M$. We
decompose $f$ and $\ell$ with respect to this partition, writting
\[
f_1=f\rho_1, \quad f_T=f\rho_T, \quad f_2=f\rho_2,
\]
\[
\ell_1=\ell\rho_1, \quad \ell_T=\ell\rho_T, \quad \ell_2=\ell\rho_2.
\]
Next, we produce an approximate solution on the neck 
$T^\varepsilon(\alpha_1,\alpha_2)$.

\begin{claim}
For the parameters $\gamma, B$ and functions  $f, \ell$ in Lemma 2, there is a unique function 
$\tilde{u}_T\in\mathcal{C}^0_\gamma(T^\varepsilon(\alpha_1,\alpha_2))$ 
satisfying
\begin{equation}\label{eq:neck1}
\begin{cases}
\Delta_{g_\varepsilon}\tilde{u}_T=f_T&\text{ in } T^\varepsilon(\alpha_1,\alpha_2)\\
\tilde{u}_T=0&\text{ on } \partial_1 T^\varepsilon(\alpha_1,\alpha_2)\\
\partial_\nu\tilde{u}_T=\ell_T&\text{ on }\partial_2 T^\varepsilon(\alpha_1,\alpha_2).
\end{cases}
\end{equation}
Moreover, there is a constant $C_T>0$, independent of $\varepsilon$, such that
\[
||\tilde{u}_T||_{\mathcal{C}^0_\gamma(T^\varepsilon(\alpha_1,\alpha_2))}\leq
C_T\left(||f_T||_{\mathcal{C}^0_{\gamma+2}(T^\varepsilon(\alpha_1,\alpha_2))}+||\ell_T||_{\mathcal{C}^0_{\gamma+1}(\partial_2T^\varepsilon(\alpha_1,\alpha_2))}\right).
\]
\end{claim}
\begin{proof}
Notice that $T^\varepsilon(\alpha_1,\alpha_2)$ is a compact manifold with corners. 
This allows us to apply the regularity theory in \cite{L} -- by \cite[Theorem 1]{L}, there is a unique function
\[
\tilde{u}_T\in\mathcal{C}^2\left(T^\varepsilon(\alpha_1,\alpha_2)\cup
\partial_2T^\varepsilon(\alpha_1,\alpha_2)\right)\cap\mathcal{C}^0(\overline{T^\varepsilon(\alpha_1,\alpha_2)})
\]
solving equation (\ref{eq:neck1}).
We may then apply Proposition $2_a, 2_b,$ or $2_c$ with the
parameter $\gamma$ from the hypothesis
of Lemma 2 and the function $\tilde{u}_T$ to arrive at the estimates in the claim.
\end{proof}

We extend the domain of $\tilde{u}_T$ to all of $M$, which we will continue to call $\tilde{u}_T$, 
by declaring $\tilde{u}_T=0$ on $M\setminus T^\varepsilon(\alpha_1,\alpha_2)$.
While $\tilde{u}_T$ may not be differentiable on $\partial_1 T^\varepsilon(\alpha_1,\alpha_2)$, the function
$u_T:=\rho_T\tilde{u}_T$ is differentiable since the support of $\rho_T$ is contained in 
$T^\varepsilon(\alpha_1+1,\alpha_2+1)$.  
One can compute
\begin{align}
\Delta_{g_\varepsilon}u_T&=f_T-q_1-q_2\notag\\
\partial_\nu u_T&=\ell_T-q_1^\partial-q_2^\partial\notag
\end{align}
where $q_*:=\Delta_{g_\varepsilon}(\rho_*\tilde{u}_T)$ and $q_*^\partial:=\partial_\nu(\rho_*\tilde{u}_T)$.  
The quantities $q_*$ and $q_*^\partial$ will be accounted for 
in the next step.

We now turn to the pieces of $M$ which come from the original manifolds $M_*$.
We define $\lambda$ according to the formula
\begin{equation}\label{eq:lambda}
\lambda:=\frac{1}{\int_{\partial M}(\rho_1+\rho_2)d\sigma_{g_\varepsilon}}
	 \left( \int_{\partial M}(\ell\beta_\varepsilon+q_1^\partial-q_2^\partial)  d\sigma_{g_\varepsilon}
	-\int_M(f\beta_\varepsilon +q_1-q_2)d\mu_{g_\varepsilon}\right),
\end{equation}
which can be interpreted as the projection of $f$ and $\ell$ along $\beta_\varepsilon$.
Observe that, for $*=1,2$, this choice of $\lambda$ implies
\begin{equation}\label{eq:Hi}
\int_M(f_*+q_*)d\mu_{g_\varepsilon}-
	\int_{\partial M}(\ell \rho_*+q_*^\partial+(-1)^*\lambda\rho_*)d\sigma_{g_\varepsilon}=0,
\end{equation}  
which we will use later.

Using standard elliptic techniques \cite{A}\cite{uhl}, we may consider a 
distributional solution $\tilde{u}_*$ to the following system
\[
\begin{cases}
\Delta_{g_*}\tilde{u}_*=f_*+q_*+b_*\delta_{\iota_*}&\text{ in }M_*\\
\partial_\nu\tilde{u}_*=\ell_*+q_*^\partial+(-1)^*\lambda\rho_*&\text{ on }\partial M_*\\
\int_M\tilde{u}_*d\mu_{g_*}=0&{}
\end{cases}
\]
where $\delta_{\iota_*}$ denotes the Dirac distribution supported on the submanifold 
$\iota_*(K)$. Applying Green's theorem to $\tilde{u}_*$, the constant $b_*$ is forced to be
\[
b_*=\frac{1}{\mathrm{Vol}_{g_K}(K)}\left(\int_{\partial M_*}(\ell_*+q_*^\partial+(-1)^*\lambda\rho_*)d\sigma_{g_*}
 -\int_{M_*}(f_*+q_*)d\mu_{g_*}\right).
\]
\begin{claim}
There is a constant $C'>0$ independent of $\varepsilon$ such that
\[
|\tilde{u}_*|\leq C'(||f||_{\mathcal{C}^0(M)}+||\ell||_{\mathcal{C}^0(\partial M)})
\]
on $M_*\setminus V^1_*$,
\[
|\tilde{u}_*|\leq C'|x|^{2-m}(||f||_{\mathcal{C}^0_{\gamma+2}(M)}+||\ell||_{\mathcal{C}^0_{\gamma+1}(\partial M)})
\]
on $V^1_*$, and 
\[
|\lambda| \leq C'(||f||_{\mathcal{C}^0_{\gamma+2}(M)}+||\ell||_{\mathcal{C}_{\gamma+1}^0}).
\]
\end{claim}
\begin{proof}
To estimate $\tilde{u}_*$, it will be useful to 
consider the decomposition $\tilde{u}_*=\overline{u}_*+\hat{u}_*$ where
\[
\begin{cases}
\Delta_{g_*}\overline{u}_*=f_*+q_*+\mathrm{Vol}_{g_K}(K)b_*&\text{ in }M_*\\
\partial_\nu\overline{u}_*=\ell_*+q_*^\partial+(-1)^*\lambda\rho_*&\text{ on }\partial M_*\\
\int_{M_*}\overline{u}_*d\mu_{g_*}=0&{}
\end{cases}
\]
\[
\begin{cases}
\Delta_{g_*}\hat{u}_*=-\mathrm{Vol}_{g_K}(K)b_*
+b_*\delta_{\iota_*}&\text{ in }M_*\\
\partial_\nu\hat{u}_*=0&\text{ on }\partial M_*\\
\int_{M_*}\hat{u}_*d\mu_{g_*}=0&{}
\end{cases}
\]
One can think of $\overline{u}_*$ and $\hat{u}_*$ as the finite and Green's function parts 
of $\tilde{u}_*$, respectively.  
Near the submanifold $\iota_*(K)$, one can use the Green's function
construction presented in \cite{A} to see that $\hat{u}_*$ takes the form
\[
\hat{u}_*=\frac{b_*}{(m-2)\omega_{m-1}}\left(|x|^{2-m}+\mathcal{O}(|x|^{3-m})\right)
\]
where $\omega_{m-1}$ is the volume of unit sphere $S^{m-1}$ and the term
$\mathcal{O}(|x|^{3-m})$ depends only on the geometry of $(M_*,g_*)$.
It follows that there is a constant $C_0$, independent of $\varepsilon$,
such that
\begin{equation}\label{eq:uhat}
|\hat{u}_*|\leq C_0 b_*|x|^{m-2}
\end{equation}
on $V^1_*$.

Next, we consider $\overline{u}_*$.
By taking $p=n$ and $k=0$ in the $L^p$ estimate (\ref{eq:uhl}) applied to 
$\overline{u}_*$, there is a constant $C_1>0$ so that
\begin{align}
||\overline{u}_*||_{W^{2,n}(M_*,g_*)}\leq &
C_1\Big{(}\left\lVert f_*+q_*-\frac{Vol_{g_K}(K)}{Vol_{g_*}(M_*) }b_*
\right\rVert_{L^n(M_*,g_*)}\notag\\
{}&\quad\quad\quad\quad+||\ell_*+q_*^\partial+(-1)^*\lambda\rho_*||_{W^{1,n}_\partial(M_*,g_*)}\Big{)}\notag
\end{align}
for $*=1,2$ where $C_1$ depends only on $n$ and the geometry of 
$(M_1,g_1), (M_2,g_2)$.
Now we may use the Sobolev Embedding Theorem \cite[Theorem 2.30]{A} and 
the Trace Theorem \cite[Theorem B.10]{uhl} to obtain the following $\mathcal{C}^0$ estimate
\begin{align}\label{eq:overlineu}
||\overline{u}_*||_{\mathcal{C}^0(M_*)}\leq&
C_2\Big{(}\left\lVert f_*+q_*-\frac{Vol_{g_K}(K)}{Vol_{g_*}(M_*) }b_*
	\right\rVert_{\mathcal{C}^0(M_*)}\notag\\
{}&\quad\quad\quad\quad+||\ell_*+q_*^\partial+(-1)^*\lambda\rho_*||_{\mathcal{C}
	^0(\partial M_*)}\Big{)}
\end{align}
where $C_2$ is a constant depending only on $n$ and the geometry of 
$(M_1,g_1), (M_2,g_2)$.

To finish the proof of the claim, it suffices to estimate $b_*,q_*,$ and $q_*^\partial$. 
It will be convenient to consider the cases $*=1,2$
separately -- in what follows, the statements will be made for $*=1$, though analogous 
arguments hold for $*=2$ and this is left to the reader.
Subtracting (\ref{eq:Hi}) from $b_1$ shows
\begin{align}\label{eq:b1}
b_1=&\frac{1}{\mathrm{Vol}_{g_K}(K)}\Big{(}\int_{\partial_2T^\varepsilon(0,0)
	\setminus\partial_2 T^\varepsilon(\alpha_1,0)}(\ell_1+q^\partial_1-
	\lambda\rho_1)\left(\frac{\sqrt{g^\partial_1}-\sqrt{g^\partial_\varepsilon}}{
	\sqrt{g^\partial_1}}\right)d\sigma_{g_1}\notag\\
{}&\quad\quad\quad\quad-\int_{T^\varepsilon(0,0)\setminus T^\varepsilon(\alpha_1,0)}
	(f_1+q_1)\left(\frac{\sqrt{g_1}-\sqrt{g_\varepsilon}}{\sqrt{g_1}}\right)
	d\mu_{g_1}\Big{)}
\end{align}
where $\sqrt{g_1^\partial}$ and $\sqrt{g^\partial_\varepsilon}$ denote the Riemannian
measures of $g_1|_{\partial M_1}$ and $g_\varepsilon|_{M_1}$, respectively.
Notice that we only integrate over
$T^\varepsilon(0,0)\setminus T^\varepsilon(\alpha_1,0)$
since it contains the supports 
$\mathrm{spt}(\rho_1)\cap\mathrm{spt}(\sqrt{g_1}-\sqrt{g_\varepsilon})$.
We will inspect each term in the expression (\ref{eq:b1}).

On $T^\varepsilon(0,0)\setminus T^\varepsilon(\alpha_1+1,0)$, notice that
$\sqrt{g_1}-\sqrt{g_\varepsilon}=\mathcal{O}(\varepsilon^{m-2})$ and on this portion of
the boundary of $M$ we have 
$\sqrt{g^\partial_1}-\sqrt{g^\partial_\varepsilon}=\mathcal{O}(\varepsilon^{m-2})$. 
Using this, we can find a constant $C_3$ which depends on $\gamma$ and $\alpha_1$,
though not on $\varepsilon$, such that the following inequalities hold
\begin{align}
{}&\int_{\partial_2T^\varepsilon(0,0)
	\setminus\partial_2 T^\varepsilon(\alpha_1,0)}\left\lvert\ell_1
	\left(\frac{\sqrt{g^\partial_1}-\sqrt{g^\partial_\varepsilon}}{
	\sqrt{g^\partial_1}}\right)\right\rvert d\sigma_{g_1}
	\leq C_3 \varepsilon^{m-2}||\ell||_{\mathcal{C}^0_{\gamma+1}(\partial M)}\notag\\
{}&\int_{T^\varepsilon(0,0)\setminus T^\varepsilon(\alpha_1,0)}
	\left\lvert f_1\left(\frac{\sqrt{g_1}-\sqrt{g_\varepsilon}}{\sqrt{g_1}}\right)\right\rvert
	d\mu_{g_1}\leq C_3\varepsilon^{m-2}||f||_{\mathcal{C}^0_{\gamma+2}(M)}\notag.
\end{align}

Next we require pointwise bounds on $q_1$ and $q_1^\partial$ in order to 
estimate (\ref{eq:overlineu}). By definition of $q_1$ and $q_1^\partial$, we have
the expressions
\[
q_1=(\Delta_{g_\varepsilon}\rho_1)\tilde{u}_T+
	2g_\varepsilon(\nabla\rho_1,\nabla\tilde{u}_T)+\rho_1(\Delta_{g_\varepsilon}\tilde{u}_T)\quad\text{ and }\quad
q_1^\partial=\rho_1\partial_\nu\tilde{u}_T
\]
where we have used the fact that $\partial_\nu\rho_1\equiv0$ on $\partial M$.
It is worthwhile to note that the support of $\nabla\rho_1$ satisfies
\[
\mathrm{spt}(\nabla\rho_1)\subset \{y\in M_1\colon e^{-\alpha_1-1}\leq
	\mathrm{dist}_{g_1}(y,\iota_1(K))\leq 1\},
\]
which we emphasize does not depend on $\varepsilon$.  
With this and the pointwise estimates of $g_\varepsilon$ in mind, 
notice that, for any $\alpha_1$ and $\alpha_2$, 
we may assume that $\rho_1$ has been
chosen so that both $|\Delta_{g_\varepsilon}\rho_1|$ 
and $|\nabla\rho_1|_{g_\varepsilon}^2$ are uniformly bounded in $\varepsilon$.
Using this observation and the estimates of Propositions $2_a, 2_b,$ or $2_c$, 
one can show 
\[
||(\Delta_{g_\varepsilon}\rho_1)\tilde{u}_T||_{\mathcal{C}^0_\gamma(M)}\leq 
	C_4(||f||_{\mathcal{C}^0_{\gamma+2}(M)}+
	||\ell||_{\mathcal{C}^0_{\gamma+1}(\partial M)})
\]
for some $C_4$ independent of $\varepsilon$. 
Inspecting (\ref{eq:neck1}), we can find a constant $C_5$, depending on 
$\gamma$ and $\alpha_1$ but not $\varepsilon$, so that 
\[
||\rho_1\Delta_{g_\varepsilon}\tilde{u}_T||_{\mathcal{C}^0(M)}\leq C_5||f||_{\mathcal{C}^0_{\gamma+2}(M)}
\]
\[
||\rho_1\partial_\nu\tilde{u}||_{\mathcal{C}^0(\partial M)}\leq C_5||\ell||_{\mathcal{C}^0_{\gamma+1}(\partial M)}.
\]

The final term we need to estimate is 
$g_\varepsilon(\nabla\rho_1,\nabla\tilde{u}_p)$. 
Let us define 
\[
D_{\alpha_1}:=T^\varepsilon(\alpha_1,0)\setminus T^\varepsilon(\alpha_1+1,0).
\]  
Since $\tilde{u}_p$ is a solution to a Poisson equation 
on the region $D_{\alpha_1}$, we may apply the classical 
gradient estimate \cite{A}, along with the pointwise estimates of $g_\varepsilon$ 
above, to find an $\varepsilon$-uniform constant
$C_6$ satisfying
\[
|\nabla \tilde{u}_T|^2_{g_\varepsilon}(y)\leq 
\frac{C_6}{\mathrm{dist}_{g_1}(y,\partial D_{\alpha_1})}(||\tilde{u}_T||_{\mathcal{C}^0(D_{\alpha_1})}
	+||f_T||_{\mathcal{C}^0(D_{\alpha_1})})
\]
for all $y\in D_{\alpha_1}$. Using this estimate with the Cauchy-Schwarz inequality, 
we can estimate the final term in the expression for $q_1$
\[
|g_\varepsilon(\nabla\rho_1,\nabla\tilde{u}_T)|(y)\leq 
C_7(||\tilde{u}_T||_{\mathcal{C}^0_\gamma(D_{\alpha_1})}+
||f_T||_{\mathcal{C}^0_{\gamma+2}(D_{\alpha_1})})
\]
for another $\varepsilon$-uniform constant $C_7$.

Summarizing our work so far, we have found a constant $C_8$, 
independent of $\varepsilon$, 
such that
\begin{equation}\label{eq:qest}
q_1(y)\leq C_8(||f||_{\mathcal{C}^0_{\gamma+2}(M)}+||\ell||_{\mathcal{C}^0_{\gamma+1}(\partial M)})
\end{equation}
\[
q_1^\partial(y)\leq C_8(||f||_{\mathcal{C}^0_{\gamma+2}(M)}+||\ell||_{\mathcal{C}^0_{\gamma+1}(\partial M})
\]
for all $y\in D_{\alpha_1}$. Notice that 
$C_8$ depends only on the geometry of $(M_1,g_1), (K,g_K)$, $\gamma$, and 
$\alpha_1$.
Integrating (\ref{eq:qest}) yields 
the desired estimate of $\lambda$ from the 
statement of the lemma. 
In turn, this estimate on $\lambda$, (\ref{eq:qest}), and the expression (\ref{eq:b1})
gives an estimate of the form
\[
|b_1|\leq e^{(m-2)t}C_9(||f||_{\mathcal{C}^0_{\gamma+2}(M)}+||\ell||_{\mathcal{C}_{\gamma+1}^0(\partial M)})
\]
Finally, recalling (\ref{eq:uhat}) and (\ref{eq:overlineu}), we have arrived at the desired estimate of $|\tilde{u}_1|$.
\end{proof}

Now we chose cut-off functions which will be used to glue together the functions
$\tilde{u}_1,u_T,$ and $\tilde{u}_2$ from Claims 1 and 2.
For the parameter $B\in(0,1)$ from the hypothesis
of Lemma 2, let $\phi_1,\phi_2:M\to[0,1]$ be smooth functions satisfying
\[
\phi_1=\begin{cases}
1&\text{ on } M_1\setminus T^\varepsilon(-B\log\varepsilon,0)\\
0&\text{ on } M_2\setminus T^\varepsilon(0,-(2-B)\log\varepsilon-1)
\end{cases}
\]
\[
\phi_2=\begin{cases}
1&\text{ on } M_1\setminus T^\varepsilon(0,-B\log\varepsilon)\\
0&\text{ on }M_2\setminus T^\varepsilon(-(2-B)\log\varepsilon-1,0)
\end{cases}
\]
which are monotone in $t$ and have vanishing normal derivatives 
$\partial_\nu\phi_*\equiv0$.
$\phi_1$ and $\phi_2$ are not to be confused with the barrier functions $\phi_\delta$
used in Section 2.2. 
Since $\varepsilon\in(0,e^{-\max(\alpha_1,\alpha_2)})$, we may have 
$\mathrm{spt}(\nabla \phi_*)\subset T^\varepsilon(\alpha_1,\alpha_2)$.
Next, we will define the approximate solution
\[
u:=\phi_1\tilde{u}_1+u_T+\phi_2\tilde{u}_2.
\]
Observe that claims 1 and 2, along with the choice of $\phi_*$, imply the estimate on 
$||u||_{\mathcal{C}^0_{\gamma}(M)}$
in Lemma \ref{lem:linear1}. Our final task will be to inspect the error term.

Since the cut-off functions have vanishing normal derivative, we 
have 
\[
\partial_\nu u=\ell_1+\ell_T+\ell_2=\ell
\]
and so we have accumulated no error term on the boundary.
Moving on the the laplacian of $u$, it is straight-forward to compute 
(keeping the support of 
$\nabla \phi_*$ in mind)
\begin{align}
\Delta_{g_\varepsilon}u=&\Delta_{g_\varepsilon}(\phi_1\tilde{u}_1)+\Delta_{g_\varepsilon}u_p
+\Delta_{g_\varepsilon}(\phi_2\tilde{u}_2)\notag\\
{}=&\Delta_{g_\varepsilon}(\phi_1)\tilde{u}_1+g_\varepsilon(\nabla \phi_1,\nabla \tilde{u}_1)+ \phi_1f\rho_1+
 \phi_1q_1+ \phi_1b_1\delta_{\iota_1(K)}\notag\\
{}&+\Delta_{g_\varepsilon}(\phi_2)\tilde{u}_2+g_\varepsilon(\nabla \phi_2,\nabla \tilde{u}_2)+\phi_2f\rho_2+
\phi_2q_2+\phi_2b_2\delta_{\iota_2(K)}\notag\\
{}&+f\rho_T-q_1-q_2\notag\\
{}=&f+E_1+E_2\notag
\end{align}
where $E_*=(\Delta_{g_\varepsilon}\phi_*)\tilde{u}_*+g_{\varepsilon}(\nabla\phi_*,\nabla\tilde{u}_*)$.
And so the error in the statement of Lemma \ref{lem:linear1} is given by $E:=E_1+E_2$.

By symmetry, it suffices to estimate the term $E_1$.  
Observe that $E_1$ is supported in the annular region 
\[
\{(z,t,\theta)\in T^\varepsilon(0,0):t\in[(1-B)\log\varepsilon,(1-B)\log\varepsilon+1]\}.
\]
By a careful choice of $\phi_1$ and applying the same gradient estimate used 
in the proof 
of Claim 2 (see \cite{A} and \cite{L}), one can find a 
constant $C_{10}$, independent of $\varepsilon$, such that
\[
||E_1||_{\mathcal{C}^0_{\gamma+2}(M)}\leq C_{10}\varepsilon^{B\gamma}(||f||_{\mathcal{C}^0_{\gamma+2}(M)}+
||\ell||_{\mathcal{C}^0_{\gamma+1}(\partial M)}).
\]
This finishes the proof of Lemma \ref{lem:linear1}
\end{proof}

\subsection{The linear problem II}
Lemma \ref{lem:linear1} can be refined by solving (\ref{eq:lPDE}) without accumulating the error term $E$.
\begin{lemma}\label{lem:linear2}
Let $\gamma\in(0,m-2)$.  There exists a choice of parameters $\alpha_1,\alpha_2>1$, $\varepsilon_0>0$,
 and a constant $C>0$ such that the following statement is satisfied for all
$\varepsilon\in(0,\varepsilon_0)$.
Given $f\in \mathcal{C}^0_{\gamma+2}(M)$ and $\ell\in\mathcal{C}^0_{\gamma+1}(\partial M)$ 
satisfying $\int_Mfd\mu_{g_\varepsilon}=\int_{\partial M}\ell d\sigma_{g_\varepsilon}$, there is
a constant $\lambda=\lambda(f,\ell)\in\mathbb{R}$
and a function $u\in\mathcal{C}^0_\gamma(M)$ satisfying
\[
\begin{cases}
\Delta_{g_\varepsilon}u=f &\text{ in } M\\
\partial_\nu u=\ell-\lambda\beta_\varepsilon&\text{ on }\partial M\\
\int_{M}u \ d\mu_{g_\varepsilon}=0
\end{cases}
\]
with the estimates
\begin{align}
||u||_{\mathcal{C}^0_\gamma(M)}&\leq C(||f||_{\mathcal{C}^0_{\gamma+2}(M)}+||\ell||_{\mathcal{C}^0_{\gamma+1}})\notag\\
|\lambda|&\leq C(||f||_{\mathcal{C}^0_{\gamma+2}(M)}+||\ell||_{\mathcal{C}^0_{\gamma+1}})\notag
\end{align}
Moreover, the constant $C>0$ depends only on 
$(M_1,g_1),(M_2,g_2),(K,g_K),\gamma$.
\end{lemma}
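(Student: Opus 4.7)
The plan is to iterate Lemma \ref{lem:linear1} and sum a geometric series of corrections, exploiting the fact that the error term $E$ produced by one application of Lemma \ref{lem:linear1} is itself admissible as a new right-hand side. Fix $B\in(0,1)$, let $C_1$ and $\varepsilon_0$ denote the constants provided by Lemma \ref{lem:linear1}, and set $(f^{(0)},\ell^{(0)}):=(f,\ell)$. Applying Lemma \ref{lem:linear1} produces $u^{(0)}\in\mathcal{C}^0_\gamma(M)$, $\lambda^{(0)}\in\mathbb{R}$, and $E^{(0)}\in\mathcal{C}^0_{\gamma+2}(M)$ with
\[
\Delta_{g_\varepsilon}u^{(0)}=f+E^{(0)},\qquad \partial_\nu u^{(0)}=\ell-\lambda^{(0)}\beta_\varepsilon,\qquad \int_M u^{(0)}\,d\mu_{g_\varepsilon}=0.
\]
Applying Green's identity to $u^{(0)}$, using the hypothesis $\int_M f\,d\mu_{g_\varepsilon}=\int_{\partial M}\ell\,d\sigma_{g_\varepsilon}$ together with $\int_{\partial M}\beta_\varepsilon\,d\sigma_{g_\varepsilon}=0$ (precisely the reason $\alpha_1,\alpha_2$ were enlarged at the end of Section~3), one obtains $\int_M E^{(0)}\,d\mu_{g_\varepsilon}=0$. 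Therefore $(f^{(1)},\ell^{(1)}):=(-E^{(0)},0)$ again satisfies the compatibility condition of Lemma \ref{lem:linear1}.

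Inductively, for each $k\geq1$ apply Lemma \ref{lem:linear1} to $(f^{(k)},\ell^{(k)}):=(-E^{(k-1)},0)$, producing $u^{(k)},\lambda^{(k)},E^{(k)}$ with
\[
\Delta_{g_\varepsilon}u^{(k)}=-E^{(k-1)}+E^{(k)},\qquad \partial_\nu u^{(k)}=-\lambda^{(k)}\beta_\varepsilon,\qquad \int_M u^{(k)}\,d\mu_{g_\varepsilon}=0,
\]
where the same Green-identity argument shows that each $E^{(k)}$ integrates to zero, keeping the iteration well-posed. Lemma \ref{lem:linear1} yields the geometric decay
\[
\|E^{(k)}\|_{\mathcal{C}^0_{\gamma+2}(M)}\leq(C_1\varepsilon^{B\gamma})^{k+1}\bigl(\|f\|_{\mathcal{C}^0_{\gamma+2}(M)}+\|\ell\|_{\mathcal{C}^0_{\gamma+1}(\partial M)}\bigr),
\]
\[
\|u^{(k)}\|_{\mathcal{C}^0_\gamma(M)}+|\lambda^{(k)}|\leq C_1(C_1\varepsilon^{B\gamma})^{k}\bigl(\|f\|_{\mathcal{C}^0_{\gamma+2}(M)}+\|\ell\|_{\mathcal{C}^0_{\gamma+1}(\partial M)}\bigr).
\]

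After shrinking $\varepsilon_0$ so that $C_1\varepsilon_0^{B\gamma}<\tfrac12$, the series $u:=\sum_{k\geq0}u^{(k)}$ and $\lambda:=\sum_{k\geq0}\lambda^{(k)}$ converge geometrically in $\mathcal{C}^0_\gamma(M)$ and $\mathbb{R}$, and their norms are controlled by $C(\|f\|_{\mathcal{C}^0_{\gamma+2}(M)}+\|\ell\|_{\mathcal{C}^0_{\gamma+1}(\partial M)})$ for some $C>0$ depending only on the geometric data. Telescoping partial sums gives $\sum_{k=0}^N\Delta_{g_\varepsilon}u^{(k)}=f+E^{(N)}$ and $\sum_{k=0}^N\partial_\nu u^{(k)}=\ell-\bigl(\sum_{k=0}^N\lambda^{(k)}\bigr)\beta_\varepsilon$; letting $N\to\infty$, the equations $\Delta_{g_\varepsilon}u=f$ and $\partial_\nu u=\ell-\lambda\beta_\varepsilon$ hold first distributionally, and then classically after invoking the elliptic $L^p$ regularity theorem stated at the end of Section~3. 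The zero-mean normalization $\int_M u\,d\mu_{g_\varepsilon}=0$ persists under summation since it holds for each $u^{(k)}$.

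The main conceptual point — and the only delicate step — is that the compatibility condition propagates from stage to stage. This is not automatic: it hinges crucially on the identity $\int_{\partial M}\beta_\varepsilon\,d\sigma_{g_\varepsilon}=0$, which is why the parameters $\alpha_1,\alpha_2$ were chosen to guarantee it. The remainder of the argument is quantitative bookkeeping: once $C_1\varepsilon^{B\gamma}<1$ the Banach fixed-point style iteration closes, and the $\mathcal{C}^0_\gamma(M)$-estimate on the sum follows from summing the geometric series of individual estimates provided by Lemma \ref{lem:linear1}.
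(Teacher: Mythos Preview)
Your proof is correct and follows essentially the same route as the paper: iterate Lemma~\ref{lem:linear1} with $(f^{(k)},\ell^{(k)})=(-E^{(k-1)},0)$, use the geometric decay of the error to sum the resulting series, and telescope to eliminate the accumulated $E^{(N)}$. Your writeup is in fact slightly more explicit than the paper's in two places---you spell out the Green's-identity computation showing $\int_M E^{(k)}\,d\mu_{g_\varepsilon}=0$ (and correctly flag the role of $\int_{\partial M}\beta_\varepsilon\,d\sigma_{g_\varepsilon}=0$), and you note the passage from distributional to classical solution via the $L^p$ estimate---but these are expository refinements of the same argument, not a different approach.
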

\begin{proof}
We will iteratively construct sequences 
\[
f^{(j)}\in\mathcal{C}^0_{\gamma+2}(M), \quad \ell^{(j)}\in\mathcal{C}^0_{\gamma+1}(\partial M), \quad
	u^{(j)}\in\mathcal{C}^0_\gamma,
\]
\[
\lambda^{(j)}\in\mathbb{R},\quad E^{(j)}\in\mathcal{C}^0_{\gamma+2}(M)
\]
and show they converge in appropriate senses.
Setting $f^{(0)}:=f$ and $\ell^{(0)}:=\ell$, Lemma 2 supplies a triple
$u^{(0)},\lambda^{(0)}$, and $E^{(0)}$ solving 
\[
\begin{cases}
\Delta_{g_\varepsilon}u^{(0)}=f^{(0)}+E^{(0)}&\text{ on } M\\
\partial_\nu u^{(0)}=\ell^{(0)}-\lambda^{(0)}\beta_\varepsilon&\text{ on }\partial M
\end{cases}
\]
with estimates. Observe the assumption on $f,\ell$ 
implies that $\int_M E^{(0)}d\mu_{g_\varepsilon}=0$.

Next set $f^{(1)}:=-E^{(0)}$, $\ell^{(1)}:=0$ and again apply Lemma 2
to obtain $u^{(1)},\lambda^{(1)},$
and $E^{(1)}$ satisfying the appropriate equations and estimates.
In general, for $j\geq1$, apply Lemma 2 with $f^{(j)}=-E^{(j-1)}$, $\ell^{(j)}=0$, and 
$B\in(0,1)$ (to be chosen later)
to obtain functions $u^{(j)},\lambda^{(j)},$ and $E^{(j)}$ upon noting that 
$\int_M E^{(j-1)}d\mu_{g_\varepsilon}=0$.
In other words, for each $j\geq1$, we have
\[
\begin{cases}
\Delta_{g_\varepsilon}u^{(j)}=f^{(j)}+E^{(j)}&\text{ in }M\\
\partial_\nu u^{(j)}=-\lambda^{(j)}\beta_\varepsilon&\text{ on }\partial M
\end{cases}
\]
along with a constant $C>0$, independent of $\varepsilon$ and $j$, such that
\[
\begin{array}{rcccl}
||u^{(j)}||_{\mathcal{C}^0_\gamma(M)}&\leq &C||f^{(j)}||_{\mathcal{C}^0_{\gamma+2}(M)}
	&\leq&C(C\varepsilon^{B\gamma})^{j-1}(||f||_{\mathcal{C}^0_{\gamma+2}(M)}+
	||\ell||_{\mathcal{C}^0_{\gamma+1}})\\
|\lambda^{(j)}|&\leq&C||f^{(j)}||_{\mathcal{C}^0_{\gamma+2}(M)}&\leq&
	C(C\varepsilon^{B\gamma})^{j-1}(||f||_{\mathcal{C}^0_{\gamma+2}(M)}+
	||\ell||_{\mathcal{C}^0_{\gamma+1}})\\
||E^{(j)}||_{\mathcal{C}^0_{\gamma+2}(M)}&\leq&
	C\varepsilon^{B\gamma}||f^{(j)}||_{\mathcal{C}^0_{\gamma+2}(M)}
	&\leq&(C\varepsilon^{B\gamma})^j(||f||_{\mathcal{C}^0_{\gamma+2}(M)}+
	||\ell||_{\mathcal{C}^0_{\gamma+1}})
\end{array}
\]

Now consider the partial sums 
\[
v^{(N)}:=\sum_{j=0}^Nu^{(j)},\quad \mu^{(N)}:=\sum_{j=0}^N\lambda^{(j)}
\]
and observe that only one error term remains when computing 
$\Delta_{g_\varepsilon}v^{(N)}$
\[
\begin{cases}
\Delta_{g_\varepsilon}v^{(N)}=f+E^{(N)}&\text{ in } M\\
\partial_\nu v^{(N)}=\ell-\mu^{(N)}\beta_\varepsilon&\text{ on }\partial M.
\end{cases}
\]
Now choose $B\in(0,1)$ so that $C\varepsilon^{B\gamma}$ for all 
$\varepsilon\in(0,\varepsilon_0)$. One can inspect the above estimates from Lemma 2 
and conclude that 
the partial sums $v^{(N)}$, $\mu^{(N)}$ form Cauchy sequences in their respective 
Banach spaces. In fact, the error term vanishes as we take $j\to\infty$
\[
||E^{(N)}||_{\mathcal{C}^0_{\gamma+2}(M)}\leq (C\varepsilon^{B\gamma})^j
(||f||_{\mathcal{C}^0_{\gamma+2}(M)}+||\ell||_{\mathcal{C}^0_{\gamma+1}})\to0.
\]
This gives us a real number $\lambda$ 
and a function $u\in\mathcal{C}^0_{\gamma}$
such that
\[
E^{(N)}\to0,\quad v^{(N)}\to u,\quad \mu^{(N)}\to \lambda,
\]
the convergence being in the appropriate space.
As for the estimates of $u$ and $\lambda$, observe that
\begin{align}
||v^{(N)}||_{\mathcal{C}^0_{\gamma+2}(M)}&\leq\sum_{j=0}^N||u^{(j)}||_{\mathcal{C}^0_{\gamma+2}(M)}\notag\\
{}&\leq\sum_{j=0}^NC(C\varepsilon^{B\gamma}(||f||_{\mathcal{C}^0_{\gamma+2}(M)}+
	||\ell||_{\mathcal{C}^0_{\gamma+1}})\notag\\
{}&\to\frac{C}{1-C\varepsilon^{B\gamma}}(||f||_{\mathcal{C}^0_{\gamma+2}(M)}+
	||\ell||_{\mathcal{C}^0_{\gamma+1}})\notag,
\end{align}
which gives the estimate in Lemma 3. The desired bound on $\lambda$ follows from a 
similar computation.
\end{proof}

\section{The fixed point problem}
The aim of the next two sections is to finish the proofs of Theorems $1_a, 1_b$, and $1_c$
by producing a function $\psi\in C^{\infty}(M)$ which
solves the equation (\ref{eq:RYP2}) on $(M,g_\varepsilon)$ for each $\varepsilon\in(0,\varepsilon_0)$. 
Since we are seeking a small conformal change to $g_\varepsilon$, 
we will write the conformal factor as $\psi=1+u$.  In terms of $u$, equation (\ref{eq:RYP2}) 
becomes
\begin{equation}
\label{eq:thm1}
\begin{cases}
\Delta_{g_\varepsilon}u=F_\varepsilon(u)&\text{ in }M\\
\partial_\nu u=F^\partial_\varepsilon(u)&\text{ on }\partial M
\end{cases}
\end{equation}
where we have introduced the sort-hand notation
\begin{align}
F_\varepsilon(u)&:=c_nR_{g_\varepsilon}(1+u)\notag\\
F^\partial_\varepsilon(u)&:=2c_n(Q(1+u)^{\frac{n}{n-2}}-H_{g_\varepsilon}(1+u))\notag
\end{align}
for some constant $Q$. The convergence statements in 
Theorem 1 will follow as consequences of our construction of $u$.
Upon producing a solution $u$ to (\ref{eq:thm1}), observe that 
$(1+u)^\frac{4}{n-2}g_{\varepsilon}$
will be scalar-flat and have constant boundary mean curvature $Q$.  

In what follows, for a given $\gamma\in(0,m-2)$, we will restrict our attention to 
$u\in\mathcal{C}^0_\gamma(M)$ which lie
in the ball of radius $r_\varepsilon:=\varepsilon^{2\gamma}$ about $0\in\mathcal{C}^0_\gamma(M)$.
We will denote this ball by $B^\gamma_{r_\varepsilon}$.
Let us suppose for a moment that we have in hand a solution $u\in B^\gamma_{r_\varepsilon}$ 
to (\ref{eq:thm1}). Integrating by parts will tell us the mean curvature of 
the resulting conformal metric
\[
Q=\frac{\frac12\int_MR_{g_\varepsilon}(1+u)d\mu_{g_\varepsilon}+
\int_{\partial M}H_{g_\varepsilon}(1+u)
d\sigma_{g_\varepsilon}}{\int_{\partial M}(1+u)^{\frac{n}{n-2}}d\sigma_{g_\varepsilon}}.
\]
Using the $L^1$ estimates on $R_{g_\varepsilon}$ and $H_{g_\varepsilon}$ 
from Propositions
$1_a,1_b,$ and $1_c$, one finds
$|Q|=\mathcal{O}(\varepsilon^{m-2})$.

Before we solve (\ref{eq:thm1}), we will first use our linear analysis to establish a 
solution to the following projected version of the problem
\begin{equation}\label{eq:preyamabe}
\begin{cases}
\Delta_{g_\varepsilon}u= F_\varepsilon(u)&\text{ in }M\\
\partial_\nu u=F^\partial_\varepsilon(u)-\lambda_{F_\varepsilon(u)}\beta_\varepsilon
&\text{ on }\partial M.
\end{cases}
\end{equation}
Later, we will arrange for the vanishing of 
term $\lambda_{F_\varepsilon(u)}$, giving a genuine solution to (\ref{eq:thm1}).

To phrase (\ref{eq:preyamabe}) as a fixed point problem, we introduce the following maps
\begin{align}
F_\varepsilon&:\mathcal{C}^0_\gamma(M)\to
\mathcal{C}^0_{\gamma+2}(M)\times\mathcal{C}_{\gamma+1}^0(\partial M),\quad
v\mapsto (F_\varepsilon(v),F_\varepsilon^\partial(v))\notag\\
G_\varepsilon&:\mathcal{C}^0_{\gamma+2}(M)\times\mathcal{C}_{\gamma+1}^0(\partial M)
\to\mathcal{C}^0_\gamma(M),\quad
(v,w)\mapsto G_\varepsilon(v,w)\notag
\end{align}
where $G_\varepsilon(v,w)$ is the solution to the boundary problem
\[
\begin{cases}
\Delta_{g_\varepsilon}G_\varepsilon(v,w)=v&\text{ in } M\\
\partial_\nu G_\varepsilon(v,w)=
	w-\lambda_{G_\varepsilon(v,w)}\beta_\varepsilon&\text{ on }\partial M,
\end{cases}
\]
whose existence is given by Lemma \ref{lem:linear2}.
Evidently, solving (\ref{eq:preyamabe}) is equivalent to finding 
a fixed point of the composition
\[
P_\varepsilon:\mathcal{C}^0_\gamma(M)\to\mathcal{C}^0_\gamma(M),\quad 
	v\mapsto G_\varepsilon(F_\varepsilon(v),F^\partial_\varepsilon(v))
\]
for some $\gamma$.

\begin{prop3} 
Let $\gamma\in(0,\frac12)$.  There is an $\varepsilon_0>0$ such that 
$P_\varepsilon(B^\gamma_{r_\varepsilon})\subset B^\gamma_{r_\varepsilon}$ for all 
$\varepsilon\in(0,\varepsilon_0)$.
\end{prop3}
\begin{proof}
As usual, $C_k$ for $k=1,2,3\dots$ will denote positive constants
independent of $\varepsilon$.
For $v\in B_{r_\varepsilon}^\gamma$,
we may apply Lemma \ref{lem:linear2} with the functions 
$F_\varepsilon(v),F^\partial_\varepsilon(v))$ 
to get a solution, $P_\varepsilon(v)$, of the linear problem along with the estimate
\[
||P_\varepsilon(v)||_{\mathcal{C}^0_{\gamma}(M)}\leq C_1\left(||F_\varepsilon(v)||_{\mathcal{C}^0_{\gamma+2}(M)}
+||F^\partial_\varepsilon(v)||_{\mathcal{C}_{\gamma+1}^0(\partial M)}\right).
\]
It is suffices to dominate $||F_\varepsilon(v)||_{\mathcal{C}^0_{\gamma+2}(M)}$ and 
$||F^\partial_\varepsilon(v)||_{\mathcal{C}_{\gamma+1}^0(\partial M)}$ by the product of 
$r_\varepsilon$ and some positive power of $\varepsilon$.

We begin with the first summand.  
Applying Propositions $1_a,1_b,1_c$ and the definition of $\psi_\varepsilon$, 
\begin{align}
|F_\varepsilon(v)\psi_\varepsilon^{\gamma+2}|&
	\leq C_2(|R_{g_\varepsilon}|\psi_\varepsilon^{\gamma+2}+
	|R_{g_\varepsilon}|\cdot |v|\psi_\varepsilon^{\gamma+2})\notag\\
{}&\leq C_3(\varepsilon^{m-2}+r_\varepsilon\varepsilon^{m-2})\notag\\
{}&\leq C_4r_\varepsilon\varepsilon^{m-2-2\gamma}.\notag
\end{align}
For the second summand in the estimate, we have
\begin{align}
|F^\partial_\varepsilon(v)|\psi_\varepsilon^{\gamma+1}&\leq C_5(\psi_\varepsilon^{\gamma+1}|Q|(1+v)^{\frac{n}{n-2}}-\psi_\varepsilon^{\gamma+1}|H_{g_{\varepsilon}}|(1+v))\notag\\
{}&\leq C_6\varepsilon^{m-2}r_\varepsilon.\notag
\end{align}
Together, we have shown 
\[
||P_\varepsilon||_{\mathcal{C}^0_\gamma(M)}\leq C_7r_\varepsilon\varepsilon^{m-2-2\gamma},
\]
as claimed. 
\end{proof}

It is a good time to observe a fact we will use later
-- the proofs in this section hold if 
$|Q|$ was only $\mathcal{O}(\varepsilon^{\frac{m-2}{2}})$, 
so long as we restrict ourselves to $\gamma\in(0,\frac14)$. 
Now we are ready to solve (\ref{eq:preyamabe}).
\begin{prop4}
Let $\gamma\in(0,\frac12)$. There exists an $\varepsilon_0>0$
so that, for each  $\varepsilon\in(0,\varepsilon_0)$, (\ref{eq:preyamabe}) has a smooth solution $u\in B_{r_\varepsilon}^\gamma$.
\end{prop4}
\begin{proof}
We will proceed by showing that the mapping $P_\varepsilon$ is 
contractive on the ball 
$B^\gamma_{r_\varepsilon}$. In other words, we will show that there is a 
$\varepsilon_0>0$ 
so that 
\[
||P_\varepsilon(u)-P_\varepsilon(v)||_{\mathcal{C}^0_\gamma(M)}\leq K||u-v||_{\mathcal{C}^0_\gamma(M)}
\]
for all $\varepsilon\in(0,\varepsilon_0)$ and $u,v\in B^\gamma_{r_\varepsilon}$.
We begin by applying Lemma \ref{lem:linear2}
\[
||P_\varepsilon(u)-P_\varepsilon(v)||_{\mathcal{C}^0_\gamma(M)}\leq C
\left(||F_\varepsilon(u)-F_\varepsilon(v)||_{\mathcal{C}^0_{\gamma+2}(M)}+
||F^\partial_\varepsilon(u)-F^\partial_\varepsilon(v)||_{\mathcal{C}_{\gamma+1}^0(\partial M)}\right),
\]
where $C>0$ is independent of $\varepsilon$.
By Proposition 3, all involved terms lie in $B^\gamma_{r_\varepsilon}$ for small $\varepsilon$.

For the first summand, keeping in mind the pointwise estimate on $|R_{g_\varepsilon}|$ from Propositions $1_a, 1_b$, and $1_c$,
and the restriction on $m$, we find
\begin{align}
\psi^{\gamma+2}_\varepsilon|F_\varepsilon(u)-F_\varepsilon(v)|&\leq
	C_8\psi^{\gamma+2}_\varepsilon|R_{g_\varepsilon}(u-v)|\notag\\
{}&\leq C_9\varepsilon\cosh^{3-m}(t)||u-v||_{\mathcal{C}^0_\gamma(M)}\notag\\
{}&\leq C_9\varepsilon||u-v||_{\mathcal{C}^0_\gamma(M)}\notag.
\end{align}
We can perform a similar estimate for the boundary term
\begin{align}
\psi_\varepsilon^{\gamma+1}|F^\partial_\varepsilon(u)-F^\partial_\varepsilon(v)|&
	=C_{10}\varepsilon \cosh(t)\psi_\varepsilon^{\gamma}|Q((1+u)^{\frac{n}{n-2}}-(1+v)^{\frac{n}{n-2}})-H_{g_\varepsilon}(u-v)|\notag\\
{}&\leq C_{11}|Q|\cdot ||u-v||_{\mathcal{C}^0_\gamma(\partial M)}+C_{12}\varepsilon \cosh(t)|H_{g_\varepsilon}|\cdot ||u-v||_{\mathcal{C}^0_\gamma(\partial M)}\notag\\
{}&\leq ||u-v||_{\mathcal{C}^0_\gamma}(C_{13}\varepsilon^{m-2}+C_{14}\varepsilon)\notag.
\end{align}
Since all the constants $C_i$ are independent of $\varepsilon$, we can find an $\varepsilon_0>0$ 
which makes $P_\varepsilon$ a contractive mapping on
$B^\gamma_{r_\varepsilon}$ for $\varepsilon<\varepsilon_0$.

The Banach fixed point theorem applied to $P_\varepsilon$ on $B^\gamma_{r_\varepsilon}$
gives a fixed point of $P_\varepsilon$, which we call $u_\varepsilon$. 
Evidently, $u_\varepsilon$ is a solution to equation (\ref{eq:preyamabe}), concluding 
the proof of Proposition 4.
\end{proof}

\section{Vanishing of $\lambda_{F_\varepsilon(v)}$}
In the last section we found, for all sufficiently small $\varepsilon$, a solution 
$u_\varepsilon\in\mathcal{C}^0_\gamma(M)$ to 
\[
\begin{cases}
\Delta_{g_\varepsilon}u_\varepsilon= F_\varepsilon(u_\varepsilon)&\text{ in }M\\
\partial_\nu u_\varepsilon=F^\partial_\varepsilon(u_\varepsilon)-
	\lambda_{F_\varepsilon(u_\varepsilon)}\beta_\varepsilon
&\text{ on }\partial M.
\end{cases}
\]
The corresponding conformal metric $(1+u_\varepsilon)^{\frac{4}{n-2}}g_\varepsilon$ will be
scalar flat, but will have boundary mean curvature equal to
\[
Q-\frac{1}{2c_n}(1+u)^{\frac{-n}{n-2}}\lambda_{F_\varepsilon(u_\varepsilon)}\beta_\varepsilon
\]
which is non-constant.  Next, we will show that $\varepsilon$-small 
conformal changes can be made to the original metrics
$g_1$ and $g_2$ before applying the gluing procedure such that, 
after applying the above construction and fixed point argument, 
the new projection term $\lambda_{\tilde{F}_\varepsilon(u_\varepsilon)}$ will vanish.

Fix $\tilde{w}_1$ and $\tilde{w}_2$, two non-zero smooth functions 
supported on the interiors of $M_1\setminus V^\varepsilon_1$ and 
$M_2\setminus V^\varepsilon_2$, respectively.  
For real parameters $a_*$ ($*=1,2$) which will be chosen later, we consider the functions 
\[
w_*:=a_*\varepsilon^{\frac{m-2}{2}}\tilde{w}_*
\]
and use them to deform the original metrics
\[
\tilde{g}_*:=(1+w_*)^{\frac{4}{n-2}}g_*.
\]
Replacing $g_1$ and $g_2$ with  $\tilde{g}_1$ and $\tilde{g}_2$ 
in the geometric gluing construction presented in section 3, 
we produce a new family of metrics $\tilde{g}_\varepsilon$ on the 
generalized connected sum $M$.
Of course, $\tilde{g}_\varepsilon$ only differs from $g_\varepsilon$ on the supports of 
$w_1$ and $w_2$.  Keeping in mind that 
$\sup_M|w_*|=\mathcal{O}(\varepsilon^{\frac{n-2}{2}})$, all of the analysis we have done on 
the family of linear operators $(\Delta_{g_\varepsilon},\partial_\nu)$ 
also holds for the new family $(\Delta_{\tilde{g}_\varepsilon},\partial_\nu)$.
Namely, the proof of the a priori estimate in Lemma (\ref{lem:linear2}) also works for the metrics
$\tilde{g}_\varepsilon$.  As usual, we will assume that $\alpha_1$ and $\alpha_2$
have be chosen so that $\int_{\partial M}\beta_\varepsilon d\sigma_{\tilde{g}_\varepsilon}=0$.

Next, we need to gather information about the new scalar curvature and 
boundary mean curvature. Notice that the support of $R_{\tilde{g}_\varepsilon}$ 
has three disjoint components -- 
$T^\varepsilon(0,0)$ and the supports of $w_*$.
Since $R_{\tilde{g}_\varepsilon}$ agrees with $R_{g_\varepsilon}$ on $T^\varepsilon(0,0)$, 
we still have the estimate of Propositions $1_a,1_b$, and $1_c$ there.
On the support of $w_*$, the formula for scalar curvature under conformal change reads
\[
R_{\tilde{g}_\varepsilon}=R_{\tilde{g}_*}=-\frac{1}{c_n}(1+w_*)^{-\frac{n+2}{n-2}}\Delta_{g_*}w_*
\]
and we conclude that $R_{\tilde{g}_\varepsilon}=\mathcal{O}(\varepsilon^{\frac{m-2}{2}})$
on the supports of $w_*$.
Hence, there is a constant $C>0$ such that
\[
|R_{\tilde{g}_\varepsilon}|\leq C\varepsilon^{\frac{m-2}{2}}\psi_\varepsilon^{1-m}(t).
\]
As for the mean curvature of the boundary, $H_{\tilde{g}_\varepsilon}$ does not differ 
from $H_{g_\varepsilon}$ since $w_*$ is supported away from the boundary.

Now, upon restricting our choice of $\gamma$ to the interval $(0,\frac14)$, we may
apply the fixed point argument from Section 4 to produce a solution 
$\tilde{u}_\varepsilon\in B^\gamma_{r_\varepsilon}\subset \mathcal{C}^0_\gamma(M)$ to
\[
\begin{cases}
\Delta_{\tilde{g}_\varepsilon}\tilde{u}_\varepsilon=\tilde{F}_\varepsilon(\tilde{u}_\varepsilon)&
	\text{ in }M\\
\partial_\nu \tilde{u}_\varepsilon=\tilde{F}^\partial_\varepsilon(\tilde{u}_\varepsilon)-
	\lambda_{\tilde{F}_\varepsilon(\tilde{u}_\varepsilon)}\beta_\varepsilon&
	\text{ on }\partial M
\end{cases}
\]
where $\tilde{F}_\varepsilon(u):=c_nR_{\tilde{g}_\varepsilon}(1+u)$ and 
$\tilde{F}^\partial_\varepsilon(u):=2c_n(\widetilde{Q}(1+u)^{\frac{n}{n-2}}
	-H_{\tilde{g}_\varepsilon}(1+u))$.
Once this is achieved, the conformal metric 
$(1+\tilde{u}_\varepsilon)^{\frac{4}{n-2}}\tilde{g}_\varepsilon$
will be scalar flat and have boundary mean curvature equal to
\[
\widetilde{Q}-\frac{1}{2c_n}(1+
\tilde{u}_\varepsilon)^{-\frac{n}{n-2}}\lambda_{\tilde{F}_\varepsilon(\tilde{u}_\varepsilon)}\beta_\varepsilon
\]
where the constant $\widetilde{Q}$ can be computed by integrating by parts 
\[
\widetilde{Q}=\frac{\frac12\int_MR_{\tilde{g}_\varepsilon}(1+\tilde{u}_\varepsilon)d\mu_{\tilde{g}_\varepsilon}+
	\int_{\partial M}H_{\tilde{g}_\varepsilon}(1+\tilde{u}_\varepsilon)d\sigma_{\tilde{g}_\varepsilon}}{
	\int_{\partial M}(1+\tilde{u}_\varepsilon)^{\frac{n}{n-2}}d\sigma_{\tilde{g}_\varepsilon}}.
\]
As before, the projection term $\lambda_{\tilde{F}_\varepsilon(\tilde{v}_\varepsilon)}$
may be non-zero, though it now (continuously) depends on the parameters $a_*$.
We will exploit this to establish the following proposition, 
concluding the proof of Theorems $1_a, 1_b,$ and $1_c$.
The following properties of the metrics $\tilde{g}_*$ will be useful in our 
computations later this section
\begin{align}
\Delta_{\tilde{g}_*}\cdot&=-\frac{1}{2c_n}(1+w_*)^{-\frac{n+2}{n-2}}g_*(\nabla w_*,\nabla \cdot)+
	(1+w_*)^{\frac{-4}{n-2}}\Delta_{g_*} \cdot\notag\\
d\mu_{\tilde{g}_{\varepsilon}}&=(1+w_*)^{\frac{2n}{n-2}}d\mu_{g_\varepsilon}\notag.
\end{align}

\begin{prop5}
For small $\varepsilon$, there is a choice of the real parameters $a_1$ and 
$a_2$ such that the resulting
rough projection $\lambda_{\tilde{F}_\varepsilon(\tilde{u}_\varepsilon)}$ vanishes.
\end{prop5}
\begin{proof}
It suffices to show that the sign of 
$\lambda_{\tilde{F}_\varepsilon(\tilde{u}_\varepsilon)}$ can be changed by manipulating $a_1$ and $a_2$.
From the proof of Lemma \ref{lem:linear2}, we may regard 
$\lambda_{\tilde{F}_\varepsilon(u_\varepsilon)}$ as the following sum
\[
\lambda_{\tilde{F}_\varepsilon(u_\varepsilon)}=
\sum_{j=0}^\infty\lambda^{(j)}_{\tilde{F}_\varepsilon(u_\varepsilon)}
\]
where each term has estimate 
\[
|\lambda^{(j)}_{\tilde{F}_\varepsilon(\tilde{v}_\varepsilon)}|\leq 
C(C\varepsilon^{B\gamma})^j(||\tilde{F}_\varepsilon(\tilde{u}_\varepsilon)||_{\mathcal{C}^0_\gamma(M)}+||\tilde{F}_\varepsilon^\partial(\tilde{u}_\varepsilon)||_{\mathcal{C}^0_{\gamma+1}(\partial M)}),
\]
where $C>0$ is uniform in $\varepsilon$.
From this expression we see that the sign of $\lambda_{\tilde{F}_{\varepsilon}(u_\varepsilon)}$, 
for small $\varepsilon$ and an appropriate choice of $B$, 
is determined by the first term in the sum. We will need to recall 
the formula for $\lambda^{(0)}$ from the proof of Lemma \ref{lem:linear2}
\begin{align}
\lambda^{(0)}:=&\frac{1}{\int_{\partial M}(\rho_1+\rho_2)d\sigma_{\tilde{g}_\varepsilon}}
	\Big(\int_M\tilde{F}_\varepsilon(\tilde{u}_\varepsilon)\beta_\varepsilon d\mu_{\tilde{g}_\varepsilon}-
\int_{\partial M}\tilde{F}^\partial_\varepsilon(\tilde{u}_\varepsilon) \beta_\varepsilon d\sigma_{\tilde{g}_\varepsilon}+\notag\\
{}&+\int_M(\Delta_{\tilde{g}_\varepsilon}(\rho_1\tilde{u}_T)-
	\Delta_{\tilde{g}_\varepsilon}(\rho_2\tilde{u}_T))d\mu_{\tilde{g}_\varepsilon}-
	\int_{\partial M}(\partial_{\tilde{\nu}}(\rho_1\tilde{u}_T)-\partial_{\tilde{\nu}}(\rho_2\tilde{u}_T))d\sigma_{\tilde{g}_\varepsilon}\Big)\notag
\end{align}
where $\tilde{u}_T$ is the solution to 
\[
\begin{cases}
\Delta_{\tilde{g}_\varepsilon}\tilde{u}_T=\tilde{F}_\varepsilon(\tilde{u_\varepsilon})\rho_T
	&\text{ on }T^\varepsilon(\alpha_1,\alpha_2)\\
\tilde{u}_T\equiv0&\text{ on }\partial_1 T^\varepsilon(\alpha_1,\alpha_2)\notag\\
\partial_\nu \tilde{u}_T=\tilde{F}^\partial_\varepsilon(\tilde{u}_\varepsilon)\rho_T&\text{ on }\partial_2T^\varepsilon(\alpha_1,\alpha_2)
\end{cases}
\]
which originally appeared in the first step in the proof of 
Lemma 3. Next, we will 
inspect each of the terms in this expression for $\lambda^{(0)}$.

Unpacking the notations in the first term, we have
\begin{align}
\int_M\tilde{F}_\varepsilon(\tilde{v}_\varepsilon)\beta_\varepsilon d\mu_{\tilde{g}_\varepsilon}&=
	c_n\int_M(R_{\tilde{g}_1}+R_{g_\varepsilon}+R_{\tilde{g}_2})(1+\tilde{u}_\varepsilon)
	(\rho_1-\rho_2)d\mu_{\tilde{g}_\varepsilon}\notag.
\end{align}
Recalling that $\tilde{u}_\varepsilon$ lies in 
$B^\gamma_{r_\varepsilon}\subset \mathcal{C}^0_\gamma(M)$
and applying the pointwise estimate of $R_{g_\varepsilon}$, it is straightforward to show
\[
\int_MR_{g_\varepsilon}(1+\tilde{u}_\varepsilon)\rho_*d\mu_{\tilde{g}_\varepsilon}=
	-4m\mathrm{Vol}(K)\omega_{m-1}+\mathcal{O}(e^{-\alpha_*}\varepsilon^{m-2})
\]
and
\[
\int_MR_{\tilde{g}_*}\rho_*d\mu_{\tilde{g}_\varepsilon}=\frac{1}{c_n}\int_{M_*}|\nabla w_*|_{g_*}^2d\mu_{g_*}
\]
where $\omega_{m-1}$ denotes the volume of the unit $(m-1)$-sphere. 

After integrating by parts, the remaining piece of the first term can be written as
\[
\int_MR_{\tilde{g}_*}\tilde{u}_\varepsilon d\mu_{\tilde{g}_\varepsilon}=
	\int_{M_*}w_*\Delta_{g_*}\tilde{u}_\varepsilon d\mu_{g_*}+\int_{\partial M_*}w_*\partial_\nu\tilde{u}_\varepsilon d\sigma_{g_*}+\mathcal{O}(\varepsilon^{m-2+\gamma}).
\]
Now we Taylor expand and rearrange the above expression for $\Delta_{\tilde{g}_*}$ 
and $\partial_{\tilde{\nu}}$
\begin{align}
\Delta_{g_*}\tilde{u}_\varepsilon=&\left(1+\frac{4}{n-2} w_*+\mathcal{O}(\varepsilon^{m-2})\right)
\Delta_{\tilde{g}_*}\tilde{u}_\varepsilon
-2g_*(\nabla w_*,\nabla \tilde{u}_\varepsilon)+\notag\\
{}&+2w_* g_*(\nabla w_*,\nabla\tilde{u}_\varepsilon)+\mathcal{O}(\varepsilon^{m-2+2\gamma})\notag\\
\partial_{\nu}\tilde{u}_\varepsilon=&\left(1+\frac{2}{n-2}w_*+\mathcal{O}(\varepsilon^{m-2})\right)
\partial_{\tilde{\nu}}\tilde{u}_\varepsilon\notag
\end{align}
and multiply by $w_*$ to find
\begin{align}
\int_MR_{\tilde{g}_*}\tilde{u}_\varepsilon d\mu_{\tilde{g}_\varepsilon}&=\int_{M_*}w_*\tilde{F}_\varepsilon(\tilde{u}_\varepsilon)d\mu_{g_*}+\int_{\partial M}w_*(\tilde{F}^\partial_{\varepsilon}(\tilde{u}_\varepsilon)-\lambda_{\tilde{F}_\varepsilon(\tilde{u}_\varepsilon)}\beta_\varepsilon)d\sigma_{g_\varepsilon}+\mathcal{O}(\varepsilon^{m-2+\gamma})\notag\\
	{}&=\int_M|\nabla w_*|_{g_*}^2d\mu_{g_*}-\lambda_{\tilde{F}_\varepsilon(\tilde{u}_\varepsilon)}\mathcal{O}(\varepsilon^{\frac{m-2}{2}})+\mathcal{O}(\varepsilon^{m-2+\gamma})\notag
\end{align}
where we have used the formula for $R_{\tilde{g}_{\varepsilon}}$ in the expression for 
$\tilde{F}_\varepsilon(\tilde{u}_{\varepsilon})$ and integrated by parts. To summarize our efforts so far, we have found
\begin{align}
\int_{M}\tilde{F}_\varepsilon(\tilde{v}_\varepsilon)\beta_\varepsilon d\mu_{\tilde{g}_\varepsilon}&=
(c_n-1)\left(\int_{M_1}|\nabla w_1|_{g_1}^2d\mu_{g_1}-\int_{M_2}|\nabla w_2|_{g_2}^2d\mu_{g_2}\right)-\lambda_{\tilde{F}_\varepsilon(\tilde{u}_\varepsilon)}\mathcal{O}(\varepsilon^{\frac{m-2}{2}})+\notag\\
{}&+\mathcal{O}(e^{-\max(\alpha_1,\alpha_2)}\varepsilon^{m-2}).\label{eq:est1}
\end{align}

Moving along to the next term in the expression for $\lambda^{(0)}$, we have
\[
\int_{\partial M}\tilde{F}^\partial_\varepsilon(\tilde{u}_\varepsilon)\beta_\varepsilon d\sigma_{\tilde{g}_\varepsilon}=
2c_n\int_{\partial M}(\widetilde{Q}(1+\tilde{u}_\varepsilon)^{\frac{n}{n-2}}-H_{\tilde{g}_\varepsilon}(1+\tilde{u}_\varepsilon))(\rho_1-\rho_2)d\sigma_{\tilde{g}_\varepsilon}.
\]
Now since $H_{\tilde{g}_\varepsilon}\equiv H_{g_\varepsilon}$, we have
\[
\int_{\partial M}H_{\tilde{g}_\varepsilon}(1+\tilde{u}_\varepsilon)\rho_*d\sigma_{\tilde{g}_\varepsilon}=\mathcal{O}(e^{-\alpha_*}\varepsilon^{m-2})
\]
which can be seen by computing $H_{g_\varepsilon}$ on this portion of the neck, noting that
the cut off functions $\xi$ and $\eta$ both take the value of 1 on the support of $\rho_1$.

Now is a good time to comment on the convergence statements in the main theorems.
As we have mentioned already, we may apply the pointwise estimate
of $R_{\tilde{g}_\varepsilon}$ and the $\mathcal{C}^0_\gamma$-norm
of $\tilde{v}_\varepsilon$ to find that $\tilde{Q}$ satisfies the estimate
\[
|\widetilde{Q}|=\mathcal{O}(\varepsilon^{\frac{m-2}{2}}).
\]
Evidently, $\tilde{F}_\varepsilon(\tilde{u}_\varepsilon)=\mathcal{O}(\varepsilon^{\frac{m-2}{2}})$
on the support of $w_*$ and 
$\lambda_{\tilde{F}_\varepsilon(\tilde{u}_\varepsilon)}=\mathcal{O}(\varepsilon^{(m-2)/2})$.
Using the computations made in this section, one can inspect the formula for 
$\widetilde{Q}$ and improve our estimate to $|\widetilde{Q}|=\mathcal{O}(\varepsilon^{m-2})$, 
as claimed in Theorems $1_a,1_b,$ and $1_c$.
This can be used to estimate the remaining term in the expression for $\int_{\partial M} \tilde{F}_\varepsilon(\tilde{u}_\varepsilon)\beta_\varepsilon d\sigma_{\tilde{g}_\varepsilon}$ and conclude
\begin{equation}\label{eq:est2}
\int_{\partial M}\tilde{F}^\partial_\varepsilon(\tilde{v}_\varepsilon)\beta_\varepsilon d\sigma_{\tilde{g}_\varepsilon}=\mathcal{O}(e^{-\max(\alpha_1,\alpha_2)}\varepsilon^{m-2}).
\end{equation}

The final two integrals in the expression for $\lambda^{(0)}$ will be treated together.
Integrating by parts, we have
\begin{align}
\int_M\Delta_{g_\varepsilon}(\rho_*\tilde{u}_T)d\mu_{\tilde{g}_\varepsilon}-\int_{\partial M}\partial_\nu(\rho_*\tilde{u}_T)d\sigma_{\tilde{g}_\varepsilon}&=\int_M(\rho_*\Delta_{g_\varepsilon}\tilde{u}_T+2(g_\varepsilon(\nabla\rho_*,\nabla\tilde{u}_T)+\tilde{u}_T\Delta_{g_\varepsilon}\rho_*)-\notag\\
{}&\quad\quad\quad\tilde{u}_T\Delta_{g_\varepsilon}\rho_*)d\mu_{g_\varepsilon}-\int_{\partial M}\rho_*\partial_\nu\tilde{u}_Td\sigma_{g_\varepsilon}\notag\\
{}&=\int_M\rho_*\rho_T\tilde{F}_\varepsilon(\tilde{u}_\varepsilon)-\tilde{u}_T\Delta_{g_\varepsilon}\rho_*d\mu_{g_\varepsilon}-\notag\\
{}&\quad\quad\quad\int_{\partial M}\rho_*\rho_T\tilde{F}^\partial_\varepsilon(\tilde{u}_\varepsilon)d\sigma_{g_\varepsilon}\notag
\end{align}
where we have used the fact that $\partial_\nu\rho_*\equiv0$. In order to proceed, will need the pointwise estimate of Propositions $2_a, 2_b,$ and $2_c$:
\begin{align}
\tilde{u}_T&\leq
	C\psi_\varepsilon^\gamma\left(||\tilde{F}_\varepsilon(\tilde{u}_\varepsilon)||_{\mathcal{C}^0_{\gamma+2}(T^\varepsilon(\alpha_1,\alpha_2))}+||\tilde{F}^\partial_\varepsilon(\tilde{u}_\varepsilon)||_{\mathcal{C}^0_{\gamma+1}(\partial_2 T^\varepsilon(\alpha_1,\alpha_2))}\right)\notag\\
{}&\leq C'\varepsilon^{m-2}\psi^\gamma_\varepsilon\notag
\end{align}
for $C,C'>0$ independent of $\varepsilon$.
Keeping in mind that $\rho_*\rho_T$ and $\Delta_{\tilde{g}_\varepsilon}(\rho_*)$ vanish outside of 
$T^\varepsilon(\alpha_1,-2\log\varepsilon-\alpha_1-1)$ if $*=1$ and $T^\varepsilon(-2\log\varepsilon-\alpha_2-1,\alpha_2)$ if $*=2$, one can use the pointwise estimate on $\tilde{u}_T$ to find
\[
\int_M\Delta_{g_\varepsilon}(\rho_*\tilde{u}_T)d\mu_{\tilde{g}_\varepsilon}-\int_{\partial M}\partial_\nu(\rho_*\tilde{u}_T)d\sigma_{\tilde{g}_\varepsilon}=
 	\mathcal{O}(e^{-\alpha_*}\varepsilon^{m-2}).
\]

Combining the above estimates, we have
\begin{align}
\lambda^{(0)}=&
	(c_n-1)\left(\int_{M_1}|\nabla w_1|_{g_1}^2d\mu_{g_1}-\int_{M_2}|\nabla w_2|_{g_2}^2d\mu_{g_2}\right)-\notag\\
{}&\lambda^{(0)}\mathcal{O}(\varepsilon^{\frac{m-2}{2}})+\mathcal{O}(e^{-\max(\alpha_1,\alpha_2)}\varepsilon^{m-2})\notag.
\end{align}
Since $||\nabla w_*||_{L^2}=a_*\mathcal{O}(\varepsilon^{m-2})$, we can choose $\alpha_1,\alpha_2$ so that the term $||\nabla w_1||_{L^2}-||\nabla w_2||_{L^2}$ 
dominates the rest of the expression for $\lambda^{(0)}$.
Evidently, one can vary the parameters $a_1$ and $a_2$ so that the sign of 
$\lambda^{(0)}$ -- and hence the sign of 
$\lambda_{\tilde{F}_\varepsilon(\tilde{v}_\varepsilon)}$ -- changes. As we previously noted, 
$\lambda_{\tilde{F}_\varepsilon(\tilde{v}_\varepsilon)}$ depends continuously on $a_1$ and $a_2$,
so we conclude that there are suitable values of $a_1$ and $a_2$
for which the projection term $\lambda_{\tilde{F}_\varepsilon(\tilde{v}_\varepsilon)}$ vanishes.
This finishes the proof of Theorems $1_a, 1_b,$ and $1_c$.
\end{proof}

\section{The non-critical case}
So far, we have produced a family of metrics 
$(1+\tilde{u}_\varepsilon)^{\frac{4}{n-2}}\tilde{g}_\varepsilon$ on $M$, each scalar-flat 
and having constant boundary mean curvature of size $\mathcal{O}(\varepsilon^{m-2})$.
In this section we will prove Theorems $2_a, 2_b,$ and $2_c$, where we arrange for this mean curvature to vanish
entirely. To achieve this, we will need yet another alteration to the above construction.
From now on, we assume that neither of the original manifolds are Ricci-flat with totally 
geodesic boundary , i.e. we assume that $\max(\sup_{M_*}|Ric_{g_*}|,\sup_{\partial M_*}|A_{g_*}|)>0$ 
for both $*=1$ and $2$.  

Let $S_*$ be a positive-definite symmetric 2-tensor with 
\[
\mathrm{spt}(S_*)\subset \left((M_*\setminus \iota^1_*)\cap (\mathrm{spt}(Ric_{g_*})\cup 
	\mathrm{spt}(A_{g_*}))\right).
\]
For a real parameter $\tilde{r}_*$, set $r_*:=\tilde{r}_*\varepsilon^{m-2}$
and consider the following variation of $g_\varepsilon$
\[
\tilde{g}_\varepsilon:=g_\varepsilon+r_1S_1+r_2S_2,\quad \tilde{g}_*:=g_*+r_*S_*,
\]
not to be confused with the conformal modifications made in section 6.

We apply the constructions of sections 4 and 5 to $\tilde{g}_\varepsilon$
in order to produce a family of solutions, $v=v_\varepsilon(r_1,r_2)\in B^\gamma_{r_\varepsilon}$ to
\[
\begin{cases}
\Delta_{\tilde{g}_\varepsilon}v=\tilde{F}_\varepsilon(v,r_1,r_2)&\text{ in } M\\
\partial_\nu v=\tilde{F}^\partial_\varepsilon(v,r_1,r_2)-
	\lambda_{\tilde{F}_\varepsilon(v,r_1,r_2)}\beta_\varepsilon&\text{ on }\partial M
\end{cases}
\]
where 
\[
\tilde{F}_\varepsilon(v,r_1,r_2):=c_nR_{\tilde{g}_\varepsilon}(1+v)
\]
is defined as usual, but
\[
\tilde{F}^\partial_\varepsilon(v,r_1,r_2):=-2c_nH_{\tilde{g}_\varepsilon}(1+v)
\]
has been altered so that, supposing we can arrange for 
$\lambda_{\tilde{F}_\varepsilon(v,r_1,r_2)}=0$, the boundary mean curvature of 
$(1+v)^{\frac{4}{n-2}}\tilde{g}_\varepsilon$ is exactly 0.
As before, we will assume that 
$\int_{\partial M}\beta_\varepsilon d\sigma_{\tilde{g}_\varepsilon}=0$,
which can be achieved for any $r_1$ and $r_2$ by an appropriate 
choice of $\alpha_1$ and $\alpha_2$.

Notice that our choice of $r_*$ ensures 
$R_{\tilde{g}_\varepsilon}$ satisfies the same pointwise bounds 
as in the previous sections.  This will allow us to apply the results of sections 4 and 5 
with trivial modifications once we verify 
\begin{equation}\label{eq:greens}
\int_M \tilde{F}_\varepsilon(v,r_1,r_2)d\mu_{\tilde{g}_\varepsilon}=
\int_{\partial M}\tilde{F}_\varepsilon^\partial(v,r_1,r_2)d\sigma_{\tilde{g}_\varepsilon}.
\end{equation}
The second and final step is to arrange for the vanishing of 
$\lambda_{\tilde{F}_\varepsilon(v,r_1,r_2)}$.

Let us take a moment to explain why simultaneous vanishing of the Ricci tensor and 
second fundamental form can potentially be an obstruction to 
achieving the conclusions of theorem B. Briefly, $(M,g_\varepsilon)$ may be
in the same conformal class as an Einstein metric with Neumann boundary
conditions in the sense of \cite{An} and the total scalar curvature plus mean curvature functional 
$Q(g_\varepsilon)$ may stable under even non-conformal perturbations.
For the metric $\tilde{g}_\varepsilon$, we can follow the calculations of \cite{Ar} to compute
\begin{align}
Q(\tilde{g}_\varepsilon)&=Q(g_\varepsilon)+2c_n\sum_{*=1}^2r_*
\left(\int_Mg_*(S_*,Ric_{g_*})d\mu_{g_*}-\int_{\partial M}g_*(S_*,A_{g_*})d\sigma_{g_*}\right)+\notag\\
{}&\quad\quad \mathcal{O}(r_1^2)+\mathcal{O}(r_2^2)\notag\\
{}&=Q(g_\varepsilon)+2c_n\sum_{*=1}^2r_*
\left(\int_MK_*d\mu_{g_*}-\int_{\partial M}K^\partial_*d\sigma_{g_*}\right)+\mathcal{O}(\varepsilon^{2(m-2)})\notag
\end{align}
where we have introduced the notation $K_*:=g_*(S_*,Ric_{g_*})$ and $K_*^\partial:=g_*(S_*,A_{g_*})$.

From this formula, we can see that if both $Ric_{g_*}$ and $A_{g_*}$ vanish identically for $*=1$ and $2$,
the first variation of $Q(g_\varepsilon)$ vanishes for all choices of $S_*$ and
we will be unable to correct the term $F(g_\varepsilon)$ with a 
small (relative to $\varepsilon$) perturbation of $g_\varepsilon$ away from the 
gluing locus to achieve the desired vanishing mean curvature. 
This reasoning heuristically explains why our construction may fail to 
produce scalar-flat metrics with vanishing boundary mean curvature on $M$
without assumptions on the Ricci tensor and second fundamental form.

\subsection{Achieving the orthogonality condition}
In this subsection, we will give a description of the values
$r_1$ and $r_2$ for which (\ref{eq:greens}) is satisfied.
\begin{prop6}
For small $\varepsilon$ and $v\in B^\gamma_{r_\varepsilon}$, there is a smooth function $f_v$ defined on a neighborhood 
$\overline{U}$ of $\frac{\varepsilon^{m-2}}{2}$ such that
\[
\int_M \tilde{F}_\varepsilon(v,r_1,f_v(r_1))d\mu_{\tilde{g}_\varepsilon}=
\int_{\partial M}\tilde{F}_\varepsilon^\partial(v,r_1,f_v(r_1))d\sigma_{\tilde{g}_\varepsilon}
\]
for all $r_1\in\overline{U}$.
\end{prop6}
\begin{proof}
For any $v\in B^\gamma_{r_\varepsilon}\subset \mathcal{C}^0_\gamma(M)$,
we introduce the function
\begin{align}
G_{v,\varepsilon}(r_1,r_2):=&\frac{1}{c_n}\Big(\int_M \tilde{F}_\varepsilon(v,r_1,r_2)d\mu_{\tilde{g}_\varepsilon}-
	\int_{\partial M}\tilde{F}_\varepsilon^\partial(v,r_1,r_2)d\sigma_{\tilde{g}_\varepsilon}\Big)\notag\\
{}=&\int_MR_{g_\varepsilon}d\mu_{g_\varepsilon}+2\int_{\partial M}H_{g_\varepsilon}d\sigma_{g_\varepsilon}+\sum_{*=1,2}r_*\left(\int_{M_1}K_*d\mu_{g_*}-\int_{\partial M_*}K_*^\partial d\sigma_{g_*}\right)\notag\\
{}&+L_v(r_1,r_2)+Q_v(r_1,r_2)\notag
\end{align}
where we have introduced the notation
\begin{align}
L_v(r_1,r_2):=&\int_MvR_{g_\varepsilon}d\mu_{g_\varepsilon}+\sum_{*=1,2}r_*\left(\int_{M_*}vK_*d\mu_{g_*}-\int_{\partial M_*}vK_*^\partial d\sigma_{g_*}\right)-2\int_{\partial M_*}vH_{g_\varepsilon}d\sigma_{g_\varepsilon}\notag\\
Q_v(r_1,r_2):=&\sum_{*=1,2}\int_{M_*}R_{\tilde{g}_*}(1+v)d\mu_{g_*}-2\int_{\partial M_*}H_{\tilde{g}_\varepsilon}(1+v)\notag\\
{}&-r_*\int_{M_*}K_*(1+v)d\mu_{g_*}+r_*\int_{\partial M_*}K_*^\partial(1+v)d\sigma_{g_*}
	+\mathcal{O}(\varepsilon^{2(m-2)}).\notag
\end{align}
$L_v$ and $Q_v$ can be interpreted as the linear and quadratic parts, 
respectively, of $G_{v,\varepsilon}$.
We also introduce the function $H_\varepsilon(r_1,r_2):=G_{v,\varepsilon}(r_1,r_2)-L_v(r_1,r_2)-Q_v(r_2,r_2)$.

For simplicity, we will pick $S_*$ to satisfying the following conditions. 
We assume that $S_*$ has been chosen so that 
$\int_MK_*d\mu_{g_*}-\int_{\partial M_*}K_*^\partial d\sigma_{g_*}=1$ 
and we will only consider the case when 
\[
\int_{M}R_{g_\varepsilon}d\mu_{g_\varepsilon}+2\int_{\partial M}H_{g_\varepsilon}d\sigma_{g_\varepsilon}<0,
\]
though the argument is very similar if this quantity is positive. 
Since this term is $\mathcal{O}(\varepsilon^{m-2})$, we will scale the metric 
$g_\varepsilon$ so that it is equal to $-\varepsilon^{m-2}$.
Now $H_\varepsilon$ takes the form
\[
H_\varepsilon(r_1,r_2)=-\varepsilon^{m-2}+r_1+r_2
\]
and the vanishing locus of $H_\varepsilon(r_1,r_2)$ is given by $\{(r_1,r_2):r_1+r_2=\varepsilon^{m-2}\}$.
We will see that the zero set of $G_{v,\varepsilon}(r_1,r_2)$ is uniformly close to this set.

It is straight forward to check that there is a constant $C>0$, independent of $\varepsilon$ and 
$v\in B^\gamma_{r_\varepsilon}$, such that
\[
L_v(r_1,r_2),Q_v(r_1,r_2)\leq C_1\varepsilon^{m-2+\gamma}.
\]
So, for any $\eta>0$, there is sufficiently small $\varepsilon$ so that
\[
|L_v(r_1,r_2)|,|Q_v(r_1,r_2)|\leq\frac{\eta}{2}\varepsilon^{m-2}.
\]
It follows that
\begin{align}
\{G_{v,\varepsilon}(r_1,r_2)=0\}&
	=\{(r_1,r_2):r_1+r_2=\varepsilon^{m-2}-L_v(r_1,r_2)-Q_v(r_1,r_2)\}\notag\\
{}&\subset\{(r_1,r_2):(1-\eta)\varepsilon^{m-2}\leq r_1+r_2\leq(1+\eta)\varepsilon^{m-2}\}=:Z_\varepsilon.\notag
\end{align}
From these remarks, we can find many zeroes of $G_{v,\varepsilon}$. For instance, 
setting $r_1':=\varepsilon^{m-2}/2$, for any $v\in B^\gamma_{r_\varepsilon}$, 
there is a number $r_2'=r_2'(v)$
with $(r_1',r_2'(v))\in Z_\varepsilon$ and $G_{v,\varepsilon}(r_1',r_2'(v))=0$.
However, we will still need a degree of freedom to arrange for $\lambda_{\tilde{F}_\varepsilon}=0$
in the next subsection.
Fortunately, for each $v\in B^\gamma_{r_\varepsilon}$ we will 
find a 1-parameter family of solutions near $(r_1',r_2')$ by applying the 
implicit function theorem to $G_{v,\varepsilon}$.

Computing the derivatives of $G_{\varepsilon,v}$,
\begin{align}
\left\lvert\frac{\partial}{\partial r_*}G_{\varepsilon,v}(0,0)\right\rvert&=\left\lvert\int_{M_*}K_*(1+v)d\mu_{g_*}-
	\int_{\partial M_*}K_*^\partial(1+v)d\sigma_{g_*}\right\rvert\notag\\
{}&\geq\left\lvert\int_{M_*}K_*d\mu_{g_*}-\int_{\partial M_*}K_*^\partial d\sigma_{g_*}\right\rvert\notag\\
{}&\quad\quad\quad-||v||_{\mathcal{C}^0(M)}\left(\int_{M_*}|K_*|d\mu_{g_*}+\int_{\partial M_*}|K_*^\partial|d\sigma_{g_*}\right)\notag\\
{}&\geq\frac12\notag
\end{align}
for $*=1,2$ and all $v\in B^\gamma_{r_\varepsilon}$.  From this
we can find a radius $R>0$, uniform in $\varepsilon$ and $v\in B^\gamma_{r_\varepsilon}$, 
so that that $\left\lvert\frac{\partial}{\partial r_*}G_{v,\varepsilon}\right\rvert\geq \frac{1}{4}$ 
on $B_R(0)\subset\mathbb{R}^2$.

\begin{figure}[htb!]
\begin{center}
\begin{picture}(0,0)
\put(225,5){$r_1$}
\put(7,222){$r_2$}
\put(-5,200){$R$}
\put(204,-7){$R$}
\put(-70,135){$\{H_\varepsilon(r_1,r_2)=0\}$}
\put(-10,100){$f_v$}
\put(70,-7){$r_1'$}
\put(122,-7){$\varepsilon^{m-2}$}
\end{picture}
\includegraphics[height=3in]{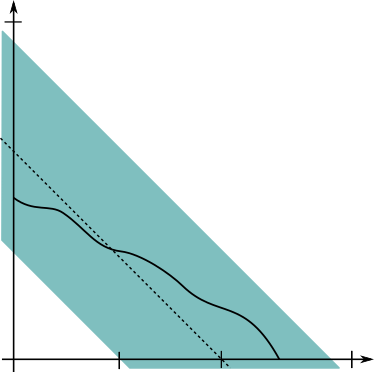}
\put(-160,110){$Z_\varepsilon$}
\end{center}
\caption{The region $Z_\varepsilon$ (in blue) and the function $f_v$ in the $r_1r_2$-plane.}
\label{fig:Zepsilon}
\end{figure}

After perhaps restricting to smaller $\varepsilon$, the set $Z_\varepsilon\cap\{r_1,r_2\geq0\}$
is contained in $B_R(0)$. We may now apply the implicit function theorem on $G_{v,\varepsilon}$
about the points $(r_1',r_2'(v))$ to obtain, for every $v\in B^\gamma_{r_\varepsilon}$, open 
neighborhoods $U(v)$ and $V(v)$ containing $r_1'$ and $r_2'(v)$, respectively, and a function
$f_v:U(v)\to V(v)$ so that $G_{v,\varepsilon}(r_1,f_v(r_1))=0$ for all $r\in U(v)$ 
(see Figure \ref{fig:Zepsilon}).
In fact, we know apriori that $f_v$ can be extended to the interval 
$(0,(1-\eta)\varepsilon^{m-2})$,
and so we may choose open sets $U$ and $V$ which are independent of 
$v\in B^\gamma_{r_\varepsilon}$.
Since the graph of each $f_v$ lies in $Z_\varepsilon$ they may 
be extended to $f_v:\overline{U}\to\overline{V}$.
\end{proof}

Before we continue, we will need one more property of 
the family $\{f_v\}_{v\in B^\gamma_{r_\varepsilon}}$.  By construction, we have
\[
f_v(r_1)=\int_{M}R_{g_\varepsilon}d\mu_{g_\varepsilon}+
2\int_{\partial M}H_{g_\varepsilon}(1+v)d\sigma_{g_\varepsilon}-r_1+
L_v(r_1,f_v(r_1))+Q_v(r_1,r_2).
\]
From this one can see, for small $\varepsilon$ and any $r_1,r_1'\in\overline{U}$, that
\[
|f_v(r_1)-f_v(r_1')|\leq4|r_1-r_1'|.
\]
Now Ascoli-Arzela tells us that
$\{f_v\}_{v\in B^\gamma_{r_\varepsilon}}$ is precompact in the 
$\mathcal{C}^0(\overline{U})$ norm.
This function $f$ will have the same Lipschitz norm bound.

\subsection{Vanishing of the rough projection}
Paralleling section 4, we introduce the map
$\tilde{P}_{\varepsilon}:\mathcal{C}^0_\gamma(M)\to\mathcal{C}^0_\gamma(M)$
sending a function $v$ to the solution of 
\[
\begin{cases}
\Delta_{\tilde{g}_\varepsilon}\tilde{P}_{\varepsilon}(v)=\tilde{F}_\varepsilon(v,r'_1,f_v(r'_1))&\text{ in }M\\
\partial_\nu\tilde{P}_{\varepsilon}(v)=
\tilde{F}^\partial_\varepsilon(v,r'_1,f_v(r'_1))-
\lambda_{\tilde{F}_\varepsilon(v,r'_1,f_v(r'_1))}\beta_\varepsilon&\text{ on }\partial M
\end{cases}
\]
The arguments of that section can be repeated to
show $\tilde{P}_{\varepsilon}$ is also a contraction mapping on $B^\gamma_{r_\varepsilon}$
for small $\varepsilon$ and $\gamma\in(0,\frac14)$.
This shows that 
$\{(\tilde{P}_{\varepsilon})^j(0)\}_{j=1}^\infty$ converges to a fixed point 
$\tilde{v}_{\varepsilon}\in B^\gamma_{r_\varepsilon}$ with respect to the $\mathcal{C}^0_\gamma$-norm. 
From the previous section,
after passing to a subsequence, the functions $f_{(\tilde{P}_{\varepsilon})^j(0)}$ also
converge to a continuous function $f:\overline{U}\to \overline{V}$ which verifies the orthogonality
condition for $\tilde{v}_\varepsilon$. We conclude that, for any $r_1\in\overline{U}$, we have
\begin{equation}\label{eq:preyamabe2}
\begin{cases}
\Delta_{\tilde{g}_\varepsilon}\tilde{v}_\varepsilon=\tilde{F}_\varepsilon(\tilde{v}_\varepsilon,r_1,f(r_1))&\text{ in }M\\
\partial_\nu\tilde{v}_\varepsilon=\tilde{F}^\partial_\varepsilon(\tilde{v}_\varepsilon,r_1,f(r_1))-
	\lambda_{\tilde{F}_\varepsilon(\tilde{v}_\varepsilon,r_1,f(r_1))}\beta_\varepsilon&\text{ on }\partial M.
\end{cases}
\end{equation}
The following proposition will complete the proof of Theorems $2_a,2_b,$ and $2_c$.

\begin{prop7}
There exists an $\varepsilon_0>0$ so that for all $\varepsilon\in(0,\varepsilon_0)$
there is a choice of $r_1\in\overline{U}$ for which
$\lambda_{\tilde{F}_\varepsilon(\tilde{v}_\varepsilon,r_1,f(r_1))}$ vanishes where $\tilde{v}_\varepsilon$ is given by (\ref{eq:preyamabe2}).
\end{prop7}

\begin{proof}
Since $\lambda_{\tilde{F}_\varepsilon(\tilde{v}_\varepsilon,r_1,f(r_1))}$ 
is continuous in $r_1$, it suffices to show that its sign can be controlled by $r_1\in\overline{U}$.
Following section 5, for small $\varepsilon$, the sign of 
$\lambda_{\tilde{F}_\varepsilon(\tilde{v}_\varepsilon,r_1,f(r_1))}$ is controlled by the sign of
\begin{align}
\lambda^{(0)}&
	=\frac{1}{\int_{\partial M}(\rho_1+\rho_2)d\sigma_{\tilde{g}_\varepsilon}}
	\Big(\int_M\tilde{F}_\varepsilon(\tilde{v}_\varepsilon,r_1,f(r_1))\beta_\varepsilon 
	d\mu_{\tilde{g}_\varepsilon}\notag\\
{}&\quad-\int_{\partial M}\tilde{F}^\partial_\varepsilon(\tilde{v}_\varepsilon,r_1,f(r_1))\beta_\varepsilon 
	d\sigma_{\tilde{g}_\varepsilon}+\int_M(\Delta_{\tilde{g}_\varepsilon}(\rho_1\tilde{u}_T)\notag\\
{}&\quad-\Delta_{\tilde{g}_\varepsilon}(\rho_2\tilde{u}_T))d_{\tilde{g}_\varepsilon}-\int_{\partial M}\partial_\nu(\rho_1\tilde{u}_T)-\partial_\nu(\rho_2\tilde{U}_T)d\sigma_{\tilde{g}_\varepsilon}\Big).\notag
\end{align}
As before, we have
\[
\int_M\Delta_{\tilde{g}_\varepsilon}(\rho_*\tilde{u}_p^\varepsilon)d\mu_{\tilde{g}_\varepsilon}+\int_{\partial M}\partial_\nu(\rho_*\tilde{u}_T)d\sigma_{\tilde{g}_\varepsilon}=
	\mathcal{O}(e^{-\alpha_*}\varepsilon^{m-2})
\]
for $*=1,2$. For the first term appearing in the above expression for $\lambda^{(0)}$, we have
\begin{align}
\frac{1}{c_n}\int_M\tilde{F}_\varepsilon(\tilde{v}_\varepsilon,r_1,f(r_1))d\mu_{\tilde{g}_\varepsilon}&=
	r_1\int_{M_1}K_1d\mu_{g_1}-f(r_1)\int_{M_2}K_2d\mu_{g_2}\notag\\
{}&+\int_MR_{g_\varepsilon}\beta_\varepsilon dvol_{g_\varepsilon}+
	\mathcal{O}(\varepsilon^{m-2+2\gamma})\notag\\
{}&=r_1\int_{M_1}K_1d\mu_{g_1}-f(r_1)\int_{M_2}K_2d\mu_{g_2}+\mathcal{O}(e^{-\min(\alpha_1,\alpha_2)}\varepsilon^{m-2})\notag
\end{align}
The boundary term has a similar estimate
\begin{align}
\frac{1}{c_n}\int_{\partial M}\tilde{F}_\varepsilon^\partial(\tilde{v}_\varepsilon,r_1,f(r_1))\beta_\varepsilon d\sigma_{\tilde{g}_\varepsilon}=&-r_1\int_{\partial M_1}K^\partial_1d\sigma_{g_1}+f(r_1)\int_{\partial M_2}K_2^\partial d\sigma_{g_2}\notag\\
{}&\quad+\mathcal{O}(e^{-\min(\alpha_1,\alpha_2)}\varepsilon^{m-2})\notag.
\end{align}
Summing these three expressions together gives us the expression we are looking for
\[
\lambda^{(0)}=r_1-f(r_1)+\mathcal{O}(e^{-\max(\alpha_1,\alpha_2)}\varepsilon^{m-2}).
\]
Hence, we can choose large $\alpha_1$ and $\alpha_2$ so that
the sign of 
$\lambda_{\tilde{F}_\varepsilon(\tilde{v}_\varepsilon,r_1,f(r_1))}$ is controlled by $r_1-f(r_1)$.  
Evidently, the graph of $f$ must intersect the line $\{r_1=r_2\}$ in $Z_\varepsilon$ (see Figure \ref{fig:Zepsilon}) and we 
conclude that the sign of $r_1-f(r_1)$ changes as $r_1$ varies over $\overline{U}$, finishing the proof of Proposition 7.
\end{proof}

\section*{References}
\bibliographystyle{elsarticle-harv}
\bibliography{references}
\end{document}